\renewcommand{\baselinestretch}{1.5} 
\newtheorem{thm}{Theorem}[section]
\newtheorem{prop}[thm]{Proposition}
\newtheorem{lem}[thm]{Lemma}
\newtheorem{cor}[thm]{Corollary}
\begin{document}


\setlength{\baselineskip}{20pt}
\begin{center}

{\Large \bf Nice vertices in cubic graphs$^{\text{\ding{73}}\ddagger}$}
\vspace{4mm}

{Wuxian Chen$^{\rm 1}$, Fuliang Lu$^{\rm 2}$, Heping Zhang$^{\rm 1 \dagger}$}

\vspace{4mm}

\footnotesize{$^{\rm 1}$School of Mathematics and Statistics, Lanzhou University,
Lanzhou, 730000, PR China}

\footnotesize{$^{\rm 2}$School of Mathematics and Statistics, Minnan Normal University,
Zhangzhou, 363000, PR China}

\renewcommand\thefootnote{}
\renewcommand{\baselinestretch}{1.2}
\footnote{$^{\text{\ding{73}}}$ This work is supported by NSFC\,(Grant No. 12271229 and 12271235) and NSF of Fujian Province (Grant No. 2021J06029).}

\renewcommand{\baselinestretch}{1.2}
\footnote{$^{\dagger}$ Corresponding author. E-mail addresses:
chenwx21@lzu.edu.cn (W. Chen), flianglu@163.com (F. Lu) and zhanghp@lzu.edu.cn (H. Zhang).}

\renewcommand{\baselinestretch}{1.2}
\footnote{$^{\ddagger}$
Published in [{\em Discrete Math}. 348 (2025) 114553].
This version fixes an error in Lemma \ref{p-3-edge-nice}, and thus corrects the proofs of Lemmas \ref{nice-number} and \ref{6-nice}. See the appendix for details.}
\end{center}

\noindent {\bf Abstract}:
 A subgraph $G'$ of a graph $G$ is \emph{nice} if $G-V(G')$ has a perfect matching. Nice subgraphs play a vital role in the theory of ear decomposition and matching minors of matching covered graphs. A vertex $u$ of a cubic graph is \emph{nice} if $u$ and its neighbors induce a nice subgraph. D. Kr\'{a}l et al. (2010) \cite{KSS} showed that each vertex of a cubic brick is nice. It is natural to ask how many nice vertices a matching covered cubic graph has. In this paper, using some basic results of matching covered graphs, we prove that if a non-bipartite cubic graph $G$ is 2-connected, then $G$ has at least 4 nice vertices; if $G$ is 3-connected and $G\neq K_4$, then $G$ has at least 6 nice vertices. We also determine all the corresponding extremal graphs.

For a cubic bipartite graph $G$ with bipartition $(A,B)$, a pair of vertices $a\in A$ and $b\in B$ is called a \emph{nice pair} if $a$ and $b$ together with their neighbors induce a nice subgraph.
We show that a connected cubic bipartite graph $G$ is a brace if and only if each pair of vertices in distinct color classes is a nice pair. In general, we prove that $G$ has at least 9 nice pairs of vertices and $K_{3,3}$ is the only extremal graph.

\vspace{2mm}

\noindent{\it Keywords}: nice vertex; nice pair; cubic graph; matching covered graph; perfect matching
\vspace{2mm}

\noindent{AMS subject classification:} 05C70,\ 05C75

{\setcounter{section}{0}
\section{\normalsize Introduction}\setcounter{equation}{0}
Graphs considered in this paper are connected, finite and simple unless specified.
A graph is \emph{cubic} if each vertex has degree 3.
Cubic graphs have been studied extensively since various open problems and conjectures can be reduced to cubic graphs.
A \emph{matching} in a graph $G$ is a set of independent edges.
A \emph{perfect matching} of $G$ is a matching covering all vertices of $G$.
In 1891, Petersen \cite{P91} showed a classical result that every 2-connected cubic graph has a perfect matching.
Sch\"{o}nberger \cite{S34} established a stronger result that each edge of a 2-connected cubic graph lies in a perfect matching.
This implies that every 2-connected cubic graph has at least three perfect matchings.
Since then, extensive research has been devoted to the enumeration of perfect matchings of cubic graphs (see literature \cite{KSS,EKK,HK,KKM}).
Meanwhile, many structural properties of cubic graphs, especially fullerene graphs, have been obtained \cite{Do02,Do03,LZ18,NDLL20}.

Let $G$ be a graph with vertex set $V(G)$ and edge set $E(G)$.
For a vertex subset $S$ of $G$, denote by $G-S$ the subgraph of $G$ obtained by deleting all the vertices in $S$ and their incident edges.
A subgraph $G'$ of $G$ is \emph{nice} if $G-V(G')$ has a perfect matching.
Nice subgraphs are closely related to the theory of ear decomposition and matching minors of matching covered graphs (cf. \cite{LP09,LM24,YL09}), which are also referred to as conformal subgraphs \cite{CLM05}, central subgraphs \cite{RST99} and well-fitted subgraphs \cite{M04}.
A connected graph with at least two vertices is \emph{matching covered} if each edge is contained in a perfect matching.
So every 2-connected cubic graph is matching covered. Note that every matching covered graph on at least four vertices is 2-connected.
Thus we have the following result.

\begin{lem}\label{matching-covered}
A cubic graph is 2-connected if and only if it is matching covered.
\end{lem}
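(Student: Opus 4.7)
The plan is to verify both directions separately, each of which reduces directly to a fact already stated in the surrounding text. No genuine argument is needed beyond assembling those facts.

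For the forward implication, I would appeal to Sch\"onberger's theorem recalled in the preceding paragraph: every edge of a 2-connected cubic graph is contained in a perfect matching. Since a 2-connected graph is in particular connected and has at least two vertices, this is exactly the definition of matching coveredness, so the conclusion is immediate.

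For the converse, I would invoke the observation stated immediately before the lemma that every matching covered graph on at least four vertices is 2-connected. The only thing to check is that the cubic hypothesis forces $|V(G)|\ge 4$: a simple $3$-regular graph cannot exist on fewer than four vertices, because each vertex would need three distinct neighbors. Combined with the displayed observation, this gives 2-connectivity.

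The main ``obstacle'' is really just deciding how much detail to include, since both ingredients are explicitly flagged in the introduction. I would keep the write-up to two short sentences, one per direction, with an inline citation to Sch\"onberger \cite{S34} for the forward direction and a one-line justification that a simple cubic graph has at least four vertices for the converse.
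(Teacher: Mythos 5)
Your proposal is correct and matches the paper's own justification exactly: the forward direction is Sch\"onberger's theorem, and the converse combines the stated fact that matching covered graphs on at least four vertices are 2-connected with the observation that a simple cubic graph has at least four vertices. Nothing further is needed.
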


Lov\'{a}sz and Plummer \cite{LP09} showed that every matching covered graph on at least 6 vertices has a nice even cycle of length at least 6.
Lov\'{a}sz \cite{L83} proved that every non-bipartite matching covered graph has an even subdivision of $K_4$ or $\overline{C_6}$ (the triangle prism) as a nice subgraph, where $K_i$ is a complete graph on $i$ vertices.
In chemistry, nice subgraphs of a molecular graph $G$ represent addition patterns on $G$ such that the rest of $G$ still has a resonant structure.
It has been proved that a fullerene graph (planar cubic graph with only pentagons and hexagons as faces) has some nice subgraphs, such as $K_2,K_{1,3},P_4$ in \cite{Do02}, and $C_6$ in \cite{YQZ09} and $C_8$ in \cite{LZ18}, etc., where $P_i$ (resp. $C_i$) denotes a path (resp. cycle) on $i$ vertices and $K_{m,n}$ denotes a complete bipartite graph with bipartite sets having $m$ and $n$ vertices.

A \emph{spanning subgraph} of a graph $G$ is a subgraph containing all vertices of $G$.
Kr\'{a}l et al. introduced the concept of trimatched vertices in cubic graphs in \cite{KSS}, where they proved that every 2-connected cubic graph on $n$ vertices has at least $\frac{n}{2}$ perfect matchings.
A vertex $u$ of a cubic graph $G$ is \emph{trimatched} if there exists a spanning subgraph $G'$ of $G$ such that the degree of $u$ is 3 in $G'$ and the degrees of the other vertices are 1 in $G'$.
That is, the subgraph $K_{1,3}$, which is induced by $u$ and its neighbors of $G'$, is nice in $G$.
Note that each $K_{1,3}$ subgraph of a cubic graph is nice if and only if the subgraph induced by $V(K_{1,3})$ is nice.
From this point of view, if a vertex $u$ and its neighbors of a cubic graph induce a nice subgraph, then we call $u$ a \emph{nice vertex}.
Recall that a graph $G$ with at least one edge is \emph{bicritical} if the removal of any two distinct vertices of $G$ yields a graph with a perfect matching.
Clearly, a bicritical graph is matching covered. A 3-connected bicritical graph is called a \emph{brick}.
Kr\'{a}l et. al. \cite{KSS} showed that each vertex of a cubic brick is nice. In fact, their proof also implies the following more general result.

\begin{lem}\label{bicritical}
Each vertex of a cubic bicritical graph is nice.
\end{lem}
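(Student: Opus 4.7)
The plan is to exploit the definition of bicritical directly: pick two of the three neighbors of $u$, delete them, and use the forced matching at $u$ in what remains.

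More concretely, let $u$ be any vertex of the cubic bicritical graph $G$, and let $v_1,v_2,v_3$ be its three neighbors. Since $G$ is bicritical, the graph $G-\{v_1,v_2\}$ has a perfect matching $M$. I would then observe that in $G-\{v_1,v_2\}$ the vertex $u$ has degree exactly $1$, because its only surviving neighbor is $v_3$. Hence the edge saturating $u$ in $M$ must be $uv_3$.

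Removing this edge from $M$ gives a matching $M\setminus\{uv_3\}$ which covers exactly the vertices of $G-\{u,v_1,v_2,v_3\}$. That is, $G-V(H)$ has a perfect matching, where $H$ is the subgraph induced by $u$ and its neighbors, so $H$ is nice and $u$ is a nice vertex. Since $u$ was arbitrary, every vertex of $G$ is nice.

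There is no real obstacle here: the only subtlety is confirming that $u$ indeed has just one remaining neighbor after deleting two of the three $v_i$, which uses that $G$ is cubic and the $v_i$ are distinct (guaranteed since $G$ is simple). Note that the argument does not require $3$-connectivity, so the same proof covers both the brick case established in \cite{KSS} and the more general bicritical case stated in Lemma~\ref{bicritical}.
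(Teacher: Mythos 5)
Your proof is correct and is essentially the argument the paper has in mind: the paper does not spell out a proof but cites Kr\'al et al.\ and notes that their argument only uses bicriticality, which is exactly what you do (delete two neighbors of $u$, note $u$ is then forced to be matched to its third neighbor, and discard that edge). The one hypothesis worth making explicit is that a bicritical graph has at least four vertices (indeed $|V(G)|\geq 6$ for cubic ones), so that $v_1,v_2$ are a legitimate pair of distinct vertices to delete; otherwise the argument is complete.
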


A connected graph with at least 6 vertices and a perfect matching is \emph{2-extendable} if any two disjoint edges is contained in a perfect matching.
A 2-extendable bipartite graph is called a \emph{brace}.
Using Lemma \ref{bicritical}, Kr\'{a}l et. al. showed that every nontrivial brick and brace decomposition of a 2-connected cubic graph $G$ contains a brace, which provides an ideal to prove the main theorem by using induction on the number of braces in nontrivial brick and brace decomposition of $G$.
In this paper, we are interested in considering how many nice vertices a 2-connected cubic graph has.
By using some basic results of matching covered graphs, we prove that if a non-bipartite cubic graph $G$ is 2-connected, then $G$ has at least 4 nice vertices; if $G$ is 3-connected and $G\neq K_4$, then $G$ has at least 6 nice vertices. We also determine all the corresponding extremal graphs (see Theorem \ref{main-1}).

Obviously, a cubic bipartite graph has no nice vertices. But we can  consider a variant.
Let $G(A,B)$ be a cubic bipartite graph with bipartition $(A,B)$.
A pair of vertices $a\in A$ and $b\in B$ of $G(A,B)$ is a \emph{nice pair} if $a$ and $b$ together with their neighbors induce a nice subgraph.
Further, a pair of vertex subsets $A'\subseteq A$ and $B'\subseteq B$ of $G(A,B)$ is a \emph{nice pair set} if each pair of vertices $a\in A'$ and $b\in B'$ is nice.
We show that a connected cubic bipartite graph $G(A,B)$ is a brace if and only if each pair of vertices $a\in A$ and $b\in B$ is nice.
Then we prove that $G(A,B)$ has a nice pair set $A'\subseteq A$ and $B'\subseteq B$ with $|A'|\geq 3$ and $|B'|\geq 3$ (see Theorem \ref{main-2}).
As a corollary, we have that $G(A,B)$ has at least 9 nice pairs of vertices and $K_{3,3}$ is the only graph attaining the lower bound (see Corollary \ref{cor}).

In Section 2, we present some notations and basic useful results of matching covered graphs.
In Section 3, we prove Theorem \ref{main-1} and determine all the extremal graphs by the splicing operation.
In Section 4, we prove Theorem \ref{main-2} and Corollary \ref{cor} and determine the corresponding extremal graphs. In the last section, we summarize our results and list some problems possibly for future research.

\section{\normalsize Preliminaries}
In the section, we give some definitions and preliminary results.
For a vertex $u$ of a graph $G$, a \emph{neighbor} of $u$ is a vertex adjacent to $u$.
Denote by $N_{G}(u)$ the set of neighbors of $u$ and $N_{G}[u]$ the closed neighborhood of $u$ in $G$, i.e., $N_{G}[u]:=N_{G}(u)\cup \{u\}$.
The \emph{degree} of $u$ in $G$, denoted by $d_G(u)$, is $|N_{G}(u)|$. An \emph{isolated vertex} of $G$ is a vertex with degree 0.
For a vertex subset $S$ of $G$, let $N_{G}(S)=\cup_{x\in S}N_G(x)$ and $G[S]$ be the subgraph of $G$ induced by $S$.
A component $K$ of $G-S$ is \emph{odd} if $|V(K)|$ is odd.
Denote by $o(G-S)$ the number of odd components of $G-S$. The following is the well-known Tutte's Theorem.

\begin{thm}[\cite{Tutte47}]\label{Tutte}
A graph $G$ has a perfect matching if and only if $o(G-S)\leq|S|$ for any $S\subseteq V(G)$.
\end{thm}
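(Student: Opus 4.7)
The plan is to prove the two directions separately. Necessity is immediate: fix a perfect matching $M$ of $G$ and an arbitrary $S\subseteq V(G)$. For each odd component $C$ of $G-S$, the restriction of $M$ to $C$ cannot saturate $V(C)$ since $|V(C)|$ is odd, so at least one vertex of $C$ is matched by $M$ into $S$. Distinct odd components use distinct vertices of $S$, which yields $o(G-S)\leq |S|$.

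For sufficiency I would argue by contradiction. Suppose some graph $G$ satisfies Tutte's condition but has no perfect matching. Setting $S=\emptyset$ gives $o(G)\leq 0$, forcing $|V(G)|$ to be even. Among all graphs on the vertex set $V(G)$ that contain $G$ as a spanning subgraph, satisfy Tutte's condition, and admit no perfect matching, choose one, call it $G'$, with the maximum number of edges. Such a $G'$ exists, since the complete graph on an even number of vertices has a perfect matching, so the process must halt short of $K_n$. By maximality, adding any non-edge of $G'$ produces a graph with a perfect matching.

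Let $U:=\{v\in V(G):d_{G'}(v)=|V(G)|-1\}$. The central step is to prove that $G'-U$ is a disjoint union of complete graphs. Assume otherwise: some component of $G'-U$ contains vertices $a,b,c$ with $ab,bc\in E(G')$ but $ac\notin E(G')$, and since $b\notin U$ there is some $d$ with $bd\notin E(G')$. By the maximality of $G'$, both $G'+ac$ and $G'+bd$ have perfect matchings $M_1$ and $M_2$. I would then examine the symmetric difference $M_1\triangle M_2$, which decomposes into alternating cycles and paths, and by carefully rerouting along the component(s) containing the new edges $ac$ and $bd$ construct a perfect matching of $G'$ itself, contradicting the choice of $G'$. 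This symmetric-difference case analysis (depending on whether $ac$ and $bd$ lie in the same alternating cycle or in different ones, and on the positions of $a,b,c,d$ along it) is the main obstacle in the argument.

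Granted that $G'-U$ is a vertex-disjoint union of complete graphs, I finish by assembling a perfect matching. By Tutte's condition $o(G'-U)\leq |U|$, so we can injectively assign to each odd component of $G'-U$ a distinct vertex of $U$ and use an edge from that vertex into the component (available because every vertex of $U$ is adjacent to everything). The remaining vertices of each odd component form an even clique and admit a perfect matching, each even component of $G'-U$ is an even clique and likewise has one, and the unused vertices of $U$ form an even set (from the parity of $|V(G)|$) that induces a clique and thus admits a perfect matching. Uniting these yields a perfect matching of $G'$, the desired contradiction.
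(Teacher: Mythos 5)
This statement is Tutte's 1-factor theorem, which the paper cites from \cite{Tutte47} and does not prove, so there is no internal proof to compare against; your proposal is the standard extremal (``maximal counterexample'') proof due to Lov\'asz, and its overall architecture --- necessity via counting matched vertices leaving odd components, sufficiency via an edge-maximal graph $G'$, the set $U$ of universal vertices, the claim that $G'-U$ is a disjoint union of cliques, and the final assembly using $o(G'-U)\leq |U|$ and parity --- is correct.

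However, as written the proof has a genuine gap: the central claim that $G'-U$ is a disjoint union of complete graphs is only asserted. You set up the configuration $a,b,c,d$ with $ab,bc\in E(G')$, $ac,bd\notin E(G')$, take perfect matchings $M_1$ of $G'+ac$ and $M_2$ of $G'+bd$ (one may assume $ac\in M_1$ and $bd\in M_2$, else $G'$ already has a perfect matching), and then say you ``would examine'' $M_1\triangle M_2$ --- but you explicitly label the ensuing case analysis ``the main obstacle'' and do not carry it out. That analysis is the entire content of the lemma: the easy case is when $ac$ and $bd$ lie on different alternating cycles of $M_1\triangle M_2$ (switch $M_2$ along the cycle through $bd$ to eliminate both added edges), while the case where they lie on a common cycle requires rerouting through one of the genuine edges $ab$ or $bc$ depending on the cyclic order of $a,c$ relative to $b,d$, and this is where the hypotheses $ab,bc\in E(G')$ are actually used. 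Until that step is written out, the sufficiency direction is a plan, not a proof. A second, smaller omission: you should verify that adding an edge preserves Tutte's condition (merging components cannot increase the number of odd ones), since otherwise the maximality of $G'$ does not by itself guarantee that every one-edge extension of $G'$ has a perfect matching.
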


Let $G$ be a graph with a perfect matching. A \emph{barrier} of $G$ is a  set $S\subseteq V(G)$ such that $o(G-S)=|S|$.
Moreover, a barrier $S$ of $G$ is \emph{nontrivial} if $|S|\geq2$. We have the following basic property of matching covered graphs.

\begin{lem}[\cite{CLM99}]\label{Tutte-cor}
Let $G$ be a matching covered graph. Then the following statements hold.\\
{\rm (i)} $G$ is free of nontrivial barriers if and only if $G$ is bicritical,\\
{\rm (ii)} For any barrier $S$ of $G$, $G-S$ has no even components,\\
{\rm (iii)} For any barrier $S$ of $G$, $S$ is an independent set.
\end{lem}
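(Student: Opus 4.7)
The plan is to apply Tutte's theorem and the defining property of matching covered graphs (every edge lies in some perfect matching), with one combinatorial observation doing most of the work for (ii) and (iii): for any nonempty barrier $S$ of $G$ and any perfect matching $M$, a simple count forces each vertex of $S$ to be matched by $M$ into a distinct odd component of $G-S$, so no $M$-edge joins $S$ to an even component and no $M$-edge lies inside $S$. The reason is that each of the $|S|$ odd components of $G-S$ must contribute at least one $M$-edge to $S$ (having odd order), and there are only $|S|$ vertices in $S$ to receive them, so the matching of these components into $S$ is a bijection.

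For (i), I would argue both directions via Tutte. If $G$ is not bicritical, choose distinct $u,v$ with $G-\{u,v\}$ admitting no perfect matching; Tutte supplies $T\subseteq V(G)\setminus\{u,v\}$ with $o(G-(T\cup\{u,v\}))>|T|$. Setting $S=T\cup\{u,v\}$, Tutte applied to $G$ itself gives $o(G-S)\leq|S|$, and the parity identity $o(G-S)\equiv|S|\pmod 2$ (forced by $|V(G)|$ being even) upgrades $o(G-S)\geq|S|-1$ to $o(G-S)=|S|$, so $S$ is a nontrivial barrier. Conversely, if $S$ is a nontrivial barrier and $u,v\in S$, Tutte applied to $G-\{u,v\}$ with witness $T=S\setminus\{u,v\}$ would require $o(G-S)\leq|S|-2$, contradicting $o(G-S)=|S|$.

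For (ii), I invoke the observation above: since no $M$-edge can run from $S$ to an even component of $G-S$, the existence of such a component $K$ would, by connectedness of $G$, produce an edge $e$ from $S$ to $K$, and matching coveredness would embed $e$ into some perfect matching $M'$, contradicting the observation. For (iii), suppose $u,v\in S$ with $uv\in E(G)$ and let $M$ be a perfect matching containing $uv$. Two vertices of $S$ now match each other, leaving at most $|S|-2$ matching edges from $S$ into $G-S$; but the $|S|$ odd components of $G-S$ demand at least $|S|$ such edges, a contradiction.

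The main obstacle I expect is keeping the parity accounting clean in (i); the upgrade from $o(G-S)>|S|-2$ to $o(G-S)=|S|$ is the one place real care is needed. A smaller subtlety is the empty barrier $S=\emptyset$, which trivially satisfies $o(G-S)=|S|=0$ for matching covered $G$, so statement (ii) should be understood as applying to nonempty barriers, and I would flag this convention at the outset.
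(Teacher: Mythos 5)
Your proof is correct: the counting observation that any perfect matching must match the $|S|$ odd components of $G-S$ bijectively into $S$ cleanly yields both (ii) and (iii) via matching-coveredness, and the Tutte-plus-parity argument for (i) is sound in both directions. The paper gives no proof of this lemma (it is quoted from \cite{CLM99}), and your argument is the standard one; your caveat that (ii) must be read for nonempty barriers is correct and consistent with how the paper applies the lemma, which is always to nontrivial barriers.
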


Using Tutte's Theorem, we give a necessary and sufficient condition under which a vertex of a 2-connected cubic graph is not nice as follows.
\begin{thm}\label{criteria}
Let $G$ be a 2-connected cubic graph and $u$ be a vertex of $G$.
Then $u$ is not nice in $G$ if and only if there exists a barrier $S$ of $G$ such that $u$ is an isolated vertex of $G-S$.
\end{thm}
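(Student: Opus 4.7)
The plan is to show that nice-ness of $u$ is equivalent to the existence of a perfect matching in $H:=G-N_G[u]$, and then relate barriers of $H$ to barriers of $G$ in a reversible way.

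For the \emph{only if} direction, suppose $u$ is not nice, i.e., $H$ has no perfect matching. By Tutte's Theorem there is a set $T\subseteq V(H)$ with $o(H-T)>|T|$. I would then set $S:=T\cup N_G(u)$; since $T\cap N_G[u]=\varnothing$, we have $|S|=|T|+3$, and in $G-S$ the vertex $u$ is isolated because $N_G(u)\subseteq S$. Thus $o(G-S)=o(H-T)+1$. The crucial step is a parity argument: since $G$ is cubic and matching covered, $|V(G)|$ is even, so $|V(H-T)|=|V(G)|-4-|T|\equiv |T|\pmod 2$, forcing $o(H-T)\equiv |T|\pmod 2$. Combined with $o(H-T)>|T|$, this upgrades the inequality to $o(H-T)\geq |T|+2$, hence $o(G-S)\geq |T|+3=|S|$. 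Since $G$ has a perfect matching, Tutte's Theorem gives the reverse inequality $o(G-S)\leq |S|$, so equality holds and $S$ is a barrier of $G$ with $u$ isolated in $G-S$.

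For the \emph{if} direction, suppose $S$ is a barrier of $G$ with $u$ isolated in $G-S$. Then $N_G(u)\subseteq S$, and by Lemma \ref{Tutte-cor}(iii) $S$ is independent, so $u\notin S$ and in particular $N_G(u)\subseteq S\setminus\{u\}$. Set $T:=S\setminus N_G(u)\subseteq V(H)$. Then $H-T=(G-S)-\{u\}$, which has exactly one fewer odd component than $G-S$ (namely $u$ itself). Thus $o(H-T)=|S|-1=|T|+2>|T|$, and Tutte's Theorem gives that $H$ has no perfect matching, i.e., $u$ is not nice.

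The main subtlety, and the one point that requires care rather than just bookkeeping, is the parity step in the \emph{only if} direction: a priori Tutte's inequality only yields $o(H-T)\geq |T|+1$, which would give $o(G-S)\geq |T|+2<|S|$ and leave $S$ short of being a barrier by one. The even order of $G$ is exactly what closes this gap; once it is in hand, the rest of the argument is straightforward accounting.
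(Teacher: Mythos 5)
Your proof is correct and follows essentially the same route as the paper's: both directions translate between Tutte sets of $H=G-N_G[u]$ and barriers of $G$ by adding or removing $N_G(u)$, and both use the same parity upgrade $o(H-T)\geq |T|+2$. The only cosmetic difference is that you close the gap to equality by applying Tutte's Theorem to $G$ itself (which has a perfect matching), whereas the paper applies it to $G-\{u,u_1\}$ via matching-coveredness; the two give the same conclusion.
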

\begin{proof}
Sufficiency. Since $u$ is an isolated vertex of $G-S$, $N_{G}(u)\subseteq S$. Set $S'=S\setminus N_{G}(u)$.
Then $o(G-N_{G}[u]-S')=o(G-(S\cup\{u\}))=o(G-S)-1$. Since $S$ is a barrier and $d_G(u)=3$, $o(G-S)-1=|S|-1>|S|-3=|S'|$. So $o(G-N_{G}[u]-S')>|S'|$.
By Tutte's Theorem, $G-N_{G}[u]$ has no perfect matching. So $u$ is not nice in $G$.

Necessity. Since $u$ is not nice, $G-N_{G}[u]$ has no perfect matching.
By Tutte's Theorem, there exists a set $S^*\subseteq V(G-N_{G}[u])$ such that $o(G-N_{G}[u]-S^*)> |S^*|$.
Then $o(G-N_{G}[u]-S^*)\geq |S^*|+2$ by parity.
Set $N_{G}(u)=\{u_1,u_2,u_3\}$. Since $G$ is matching covered by Lemma \ref{matching-covered}, $G-\{u,u_1\}$ has a perfect matching.
By Tutte's Theorem again, $o(G-\{u,u_1\}-(S^*\cup\{u_2,u_3\}))\leq |S^*\cup\{u_2,u_3\}|=|S^*|+2$.
So $o(G-N_{G}[u]-S^*)=|S^*|+2$ and $o(G-N_{G}(u)-S^*)=o(G-N_{G}[u]-S^*)+1=|S^*|+3=|S^*\cup N_{G}(u)|$.
Thus $S:=N_{G}(u)\cup S^*$ is a barrier of $G$ such that $u$ is an isolated vertex of $G-S$.
\end{proof}

Let $X$ and $Y$ be two vertex subsets of a graph $G$ and $E_G(X,Y)$ (or simply $E(X,Y)$) the set of all edges of $G$ with one end in $X$ and the other in $Y$.
For $\emptyset\neq X\subset V(G)$, the set $E(X,\overline{X})$ is called an \emph{edge cut} of $G$, and is denoted by $\partial_G(X)$ (or simply by $\partial(X)$), where $\overline{X}:=V(G)\setminus X$.
For a vertex $u$ of $G$, we denote $\partial{(\{u\})}$ by $\partial{(u)}$ for brevity.
An edge cut $\partial(X)$ is \emph{trivial} if either $|X|=1$ or $|\overline{X}|=1$. A \emph{k-cut} is an edge cut with $k$ edges.
For an edge cut $\partial(X)$ of $G$, denote by $G/(X\rightarrow x)$ (or simply $G/X$) the graph (possible with multiple edges) obtained from $G$ by contracting $X$ into a single vertex $x$.
The two graphs $G/X$ and $G/\overline{X}$ are called the \emph{$\partial(X)$-contractions} of $G$.
We will use the following two facts related to edge cuts.

\begin{lem}[\cite{BM}, Exercise 9.3.8]\label{G/X}
If $\partial{(X)}$ is a $k$-cut of a $k$-edge-connected graph, then the two $\partial(X)$-contractions are $k$-edge-connected.
\end{lem}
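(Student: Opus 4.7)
The plan is to verify directly that every edge cut of $G/X$ contains at least $k$ edges, by putting it into bijective correspondence with an edge cut of $G$. Let $x$ denote the single vertex of $G/X$ obtained by contracting $X$. Take any proper nonempty vertex subset $Y'\subsetneq V(G/X)$; after possibly replacing $Y'$ with its complement (which does not change the edge cut), I may assume $x\in Y'$, and set $Z:=Y'\setminus\{x\}\subseteq \overline{X}$.

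The key observation I would then establish is that, viewed as sets of edges, $\partial_{G/X}(Y')=\partial_{G}(X\cup Z)$. Indeed, contracting $X$ only discards edges with both ends in $X$ (which become loops); every other edge of $G$ survives in $G/X$, possibly with one endpoint relabelled to $x$. An edge of $G$ lies in $\partial_{G}(X\cup Z)$ precisely when one of its ends is in $X\cup Z$ and the other in $\overline{X}\setminus Z$, and under the contraction these are exactly the edges of $G/X$ joining $Y'=\{x\}\cup Z$ to $V(G/X)\setminus Y'=\overline{X}\setminus Z$. Since $Y'\neq V(G/X)$ forces $\overline{X}\setminus Z\neq\emptyset$, the set $X\cup Z$ is a proper nonempty subset of $V(G)$, so the $k$-edge-connectivity of $G$ yields $|\partial_{G/X}(Y')|=|\partial_{G}(X\cup Z)|\geq k$. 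The argument for $G/\overline{X}$ is entirely symmetric: swap the roles of $X$ and $\overline{X}$ throughout.

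Overall I expect this to be a routine bookkeeping argument with no real obstacle. The only mild subtlety is to check that $G/X$ has at least two vertices so that edge connectivity is meaningful, but this follows immediately from $\overline{X}\neq\emptyset$, which is forced by the existence of the $k$-cut $\partial(X)$ with $k\geq 1$.
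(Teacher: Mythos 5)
Your argument is correct. Note, however, that the paper offers no proof of this statement at all: it is quoted as Exercise 9.3.8 of Bondy and Murty, so there is nothing to compare against; your direct verification (every edge cut of $G/X$ equals, as an edge set, the cut $\partial_G(X\cup Z)$ of $G$, hence has at least $k$ edges) is the standard and expected solution to that exercise. It is worth observing that your proof never uses the hypothesis $|\partial(X)|=k$ — it shows the stronger fact that contracting any nonempty proper vertex subset of a $k$-edge-connected graph yields a $k$-edge-connected graph — which is fine, but you might remark on it so the reader does not hunt for where that hypothesis was needed.
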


\begin{lem}[\cite{NDLL20}]\label{cut-matching}
In a 3-connected graph, every nontrivial 3-cut is a matching.
\end{lem}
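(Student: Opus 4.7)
The plan is to argue by contradiction. Suppose $G$ is $3$-connected and $\partial(X)$ is a nontrivial 3-cut of $G$ that is not a matching. Let $X_\partial$ (respectively $\overline{X}_\partial$) denote the set of endpoints of the three edges of $\partial(X)$ that lie in $X$ (respectively in $\overline{X}$). Since $|\partial(X)|=3$, both $|X_\partial|$ and $|\overline{X}_\partial|$ are at most $3$, with $|X_\partial|+|\overline{X}_\partial|=6$ precisely when $\partial(X)$ is a matching. Hence the failure of the matching property forces $|X_\partial|\leq 2$ or $|\overline{X}_\partial|\leq 2$; by symmetry, I would assume the former.

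Before exploiting this, I would first show that $|X|\geq 3$ and $|\overline{X}|\geq 3$. Nontriviality of the cut gives $|X|,|\overline{X}|\geq 2$. If $|X|=2$, say $X=\{x_1,x_2\}$, then $d_G(x_1)+d_G(x_2)=2|E(G[X])|+|\partial(X)|\leq 2+3=5$, contradicting the fact that every vertex of a $3$-connected graph has degree at least $3$ (which uses $|V(G)|\geq 4$). The same computation excludes $|\overline{X}|=2$, so indeed $|X|,|\overline{X}|\geq 3$.

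Now, because $|X|\geq 3> |X_\partial|$, the set $X\setminus X_\partial$ is nonempty, and $\overline{X}$ is nonempty as well. Every edge of $G$ joining $X\setminus X_\partial$ to $\overline{X}$ must belong to $\partial(X)$ and therefore has an endpoint in $X_\partial$, so deleting the vertices of $X_\partial$ from $G$ separates $X\setminus X_\partial$ from $\overline{X}$. This yields a vertex cut of size at most $2$ in $G$, contradicting $3$-connectivity, and the lemma follows.

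The main obstacle---really only a minor subtlety---is ruling out the boundary cases $|X|=2$ or $|\overline{X}|=2$, where $X_\partial$ could equal all of $X$ (or $\overline{X}_\partial$ all of $\overline{X}$) and the vertex-cut argument would collapse. This is handled cleanly by the degree-sum bound above, relying only on the fact that $3$-connectivity implies minimum degree at least $3$; the remainder of the argument is a straightforward separation argument of the kind standard in connectivity proofs.
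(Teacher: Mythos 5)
Your proof is correct. The paper does not prove this lemma at all --- it is imported from \cite{NDLL20} as a known fact --- so there is nothing to compare against; your argument is the standard one: if two cut edges shared an endpoint, the at most two $X$-side (or $\overline{X}$-side) endpoints of the cut would form a vertex cut separating $X\setminus X_\partial$ from $\overline{X}$, and your degree-sum computation correctly disposes of the boundary case $|X|=2$ (using the paper's standing simplicity assumption, which is needed there to bound $|E(G[X])|\leq 1$).
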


Let $G$ be a matching covered graph. An edge cut $C$ of $G$ is \emph{tight} if $|C\cap M|=1$ for every perfect matching $M$ of $G$.
It is clear that each trivial edge cut of $G$ is tight and the two $C$-contractions of $G$ are matching covered if $C$ is a tight cut of $G$.

\begin{lem}[\cite{NDLL20}]\label{3-cut}
Each tight cut of a 2-connected cubic graph is a 3-cut.
\end{lem}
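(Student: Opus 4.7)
The plan is to prove the lemma in three steps: parity of $|C|$, the lower bound $|C|\geq 3$, and the upper bound $|C|\leq 3$.

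\emph{Parity and lower bound.} Fix any perfect matching $M$ of $G$. Tightness of $C$ gives $|M\cap C|=1$, say $M\cap C=\{e\}$. Then $e$ saturates one vertex of $X$ from outside while the remaining $|X|-1$ vertices of $X$ are paired by edges of $M\cap E(G[X])$, so $|X|$ is odd. Summing degrees over $X$ gives $3|X|=2|E(G[X])|+|C|$, whence $|C|$ and $|X|$ share parity and $|C|$ is odd. Since $G$ is 2-connected on at least three vertices, $G$ is 2-edge-connected, so $|C|\geq 2$; with the parity, $|C|\geq 3$.

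\emph{Upper bound.} Suppose for contradiction $|C|\geq 5$. The plan is to construct a perfect matching $M^*$ of $G$ with $|M^*\cap C|\geq 3$, contradicting tightness. For each $e=u_ev_e\in C$ ($u_e\in X$, $v_e\in\overline X$), matching-coveredness of $G$ (Lemma \ref{matching-covered}) combined with tightness of $C$ supplies a perfect matching $M_e$ of $G$ with $M_e\cap C=\{e\}$; restriction shows both $G[X]-u_e$ and $G[\overline X]-v_e$ admit perfect matchings. Writing $k_u$ for the number of $C$-edges at $u$, no vertex has $k=3$: if some $v^*\in\overline X$ had $k_{v^*}=3$, then $v^*$ would be isolated in $G[\overline X]$, so $G[\overline X]-v'$ would still contain the isolated $v^*$ for any other $\overline X$-endpoint $v'$ of $C$, contradicting the PM property above (the case where $v^*$ is the only such endpoint would force $|C|\leq 3$). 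Hence $k_u\leq 2$ everywhere, so the set of $X$-endpoints (and of $\overline X$-endpoints) of $C$ has size at least $\lceil|C|/2\rceil\geq 3$. Viewing $C$ as a bipartite graph of maximum degree $2$ with $|C|\geq 5$ edges, K\H{o}nig's theorem (since $2\cdot 2=4<5$) yields a matching of size $3$ in $C$: three edges $e_i=u_iv_i$ ($i=1,2,3$) with $u_1,u_2,u_3$ pairwise distinct and $v_1,v_2,v_3$ pairwise distinct.

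If both $G[X]-\{u_1,u_2,u_3\}$ and $G[\overline X]-\{v_1,v_2,v_3\}$ admit perfect matchings $N_X$ and $N_{\overline X}$, then $M^*=\{e_1,e_2,e_3\}\cup N_X\cup N_{\overline X}$ is the required perfect matching, completing the contradiction. The main obstacle is guaranteeing these PMs exist (possibly after reselecting the triple among the various matchings of size $3$ in $C$): my plan is a Tutte-type argument via Theorem \ref{Tutte}, in which failure of such a PM on one side for every admissible triple forces a barrier on that side, and stitching such barriers together with the edge-cut structure of $C$ yields a set $T\subseteq V(G)$ with $o(G-T)>|T|$, contradicting matching-coveredness of $G$. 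This barrier synthesis, likely via a Gallai--Edmonds analysis of $G[X]$ and $G[\overline X]$ that exploits the fact that every $C$-endpoint is an allowable vertex in its side, is the most delicate step of the proof.
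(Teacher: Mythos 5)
First, a point of reference: the paper does not prove this lemma at all --- it is imported from \cite{NDLL20} --- so there is no in-paper argument to compare against, and your proposal must stand on its own. Your parity and lower-bound step is correct, and so are the preparatory observations in the upper-bound step: for each $e\in C$, matching-coveredness plus tightness yields a perfect matching $M_e$ with $M_e\cap C=\{e\}$, whence $G[X]-u_e$ and $G[\overline{X}]-v_e$ have perfect matchings; no endpoint of $C$ carries three cut-edges; and K\H{o}nig's theorem then produces three independent edges of $C$.

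The gap is that the entire force of the upper bound lies in producing a perfect matching $M^*$ of $G$ with $|M^*\cap C|\geq 3$, and that is exactly the step you do not carry out. Saying that failure ``forces a barrier on that side'' and that ``stitching such barriers together \ldots yields a set $T$ with $o(G-T)>|T|$'' is a statement of intent, not an argument: you have not shown that any of the (possibly many) size-$3$ matchings in $C$ admits perfect matchings on both sides after its endpoints are deleted, and nothing you have established up to that point excludes the a priori possibility that every such triple fails on one side. As written, the proof therefore does not establish $|C|\leq 3$. For what it is worth, the standard way to close precisely this gap --- and effectively the route behind the cited source --- is the perfect matching polytope: by your own parity argument every odd edge cut of a $2$-edge-connected cubic graph has at least $3$ edges, so the constant vector with value $\tfrac{1}{3}$ on every edge satisfies Edmonds' degree and odd-cut constraints and hence equals a convex combination $\sum_i\lambda_i\chi^{M_i}$ of perfect matchings; evaluating both sides on the tight cut $C$ gives $\tfrac{|C|}{3}=\sum_i\lambda_i\,|M_i\cap C|=1$, i.e., $|C|=3$. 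That one line replaces your whole upper-bound construction, at the cost of invoking Edmonds' theorem; if you want to stay elementary, the ``barrier synthesis'' you allude to must actually be written out, and it is not routine.
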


\begin{figure}
\centering
\includegraphics{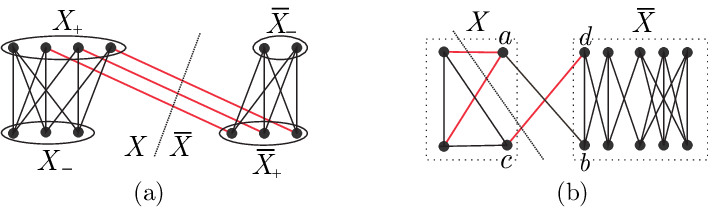}
\caption{\label{tight-cut} (a)\ A tight cut $\partial{(X)}$ in a bipartite matching covered graph, (b)\ The tight cut $\partial{(X\setminus\{a\})}$ in a non-bipartite cubic graph with a 2-cut $\partial{(X)}$.}
\end{figure}

For a vertex subset $X$ of a bipartite graph $G(A,B)$, if $|X|$ is odd, then clearly one of the two sets $A\cap X$ and $B\cap X$ is larger than the other; we denote the larger and smaller sets by $X_{+}$ and $X_{-}$, respectively, and $\overline{X}_{+}$ and $\overline{X}_{-}$ are defined analogously for the vertex subset $\overline{X}$. The following is a characterization of tight cuts in bipartite matching covered graphs (see Fig. \ref{tight-cut}(a) for an example).

\begin{lem}[\cite{L87}]\label{tight-cut-bipartite}
An edge cut $\partial{(X)}$ of a bipartite matching covered graph $G(A,B)$ is tight if and only if: {\rm (i)}\ $|X|$ is odd and $|X_{+}|=|X_{-}|+1$, and {\rm (ii)}\ $E(X_{-},\overline{X}_{-})=\emptyset$.
\end{lem}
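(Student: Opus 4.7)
The plan is to analyze both directions by counting how the edges of a single perfect matching $M$ distribute across the cut and within $G[X]$, exploiting the bipartite structure. Since $G(A,B)$ has a perfect matching, $|A|=|B|$, and by swapping the roles of $A$ and $B$ if necessary one may assume $|X\cap A|\geq |X\cap B|$, so that $X_{+}=X\cap A$ and $X_{-}=X\cap B$; the equality $|A|=|B|$ then forces $\overline{X}_{+}=\overline{X}\cap B$ and $\overline{X}_{-}=\overline{X}\cap A$, whence every edge of $\partial(X)$ lies in $E(X_{+},\overline{X}_{+})\cup E(X_{-},\overline{X}_{-})$.

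For sufficiency, assume (i) and (ii) and let $M$ be any perfect matching. Each vertex of $X_{-}\subseteq B$ is matched by $M$ to a vertex of $A$; by (ii) the partner cannot lie in $\overline{X}_{-}$, so it lies in $X_{+}$. Hence $|X_{-}|$ vertices of $X_{+}$ are saturated inside $X$, and the remaining $|X_{+}|-|X_{-}|=1$ vertex of $X_{+}$ must be matched across $\partial(X)$ to $\overline{X}_{+}$. This gives $|M\cap\partial(X)|=1$, so $\partial(X)$ is tight.

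For necessity, suppose $\partial(X)$ is tight and fix any perfect matching $M$. If $a$ is the number of edges of $M$ lying inside $G[X]$, these saturate $a$ vertices of $X\cap A$ and $a$ vertices of $X\cap B$, while the unique cross edge of $M$ in $\partial(X)$ saturates one further vertex of $X$; hence $|X|=2a+1$ is odd, and the nonnegative integers $|X\cap A|-a$ and $|X\cap B|-a$ sum to $1$, forcing $\{|X\cap A|,|X\cap B|\}=\{a,a+1\}$ and therefore $|X_{+}|=|X_{-}|+1$, which is (i). For (ii) one argues by contradiction: if some edge $e\in E(X_{-},\overline{X}_{-})$ existed, the matching-covered hypothesis would yield a perfect matching $M^{*}$ through $e$, and by tightness $e$ would be the only cross edge of $M^{*}$; then only $|X_{-}|-1$ vertices of $X_{-}$ are matched inside $X$, pairing with $|X_{-}|-1$ vertices of $X_{+}$ and leaving $|X_{+}|-(|X_{-}|-1)=2$ vertices of $X_{+}$ that require cross edges into $\overline{X}_{+}$, contradicting $|M^{*}\cap\partial(X)|=1$.

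The genuinely subtle step is this last one, where the matching-covered hypothesis enters essentially to realize a hypothetical $e\in E(X_{-},\overline{X}_{-})$ inside a concrete $M^{*}$; the parity and counting arguments by themselves give (i) and sufficiency, but without matching coverage the structural condition (ii) could not be extracted from the tight-cut definition.
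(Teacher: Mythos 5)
The paper offers no proof of Lemma~\ref{tight-cut-bipartite}: it is quoted from Lov\'{a}sz~\cite{L87}, so there is no in-paper argument to compare against. Your proof is correct and self-contained, and it is the standard argument for this characterization: the identification of $X_{\pm}$ and $\overline{X}_{\pm}$ with the colour classes (using $|A|=|B|$), the count showing that under (i) and (ii) exactly one vertex of $X_{+}$ and no vertex of $X_{-}$ is matched across $\partial(X)$, the parity/counting derivation of (i) from a single perfect matching, and the use of the matching-covered hypothesis to route a perfect matching through a hypothetical edge of $E(X_{-},\overline{X}_{-})$ and thereby force at least three cross edges, are all sound. The only point worth flagging is cosmetic: in the necessity direction the symbols $X_{\pm}$ are undefined until oddness of $|X|$ is established, but your argument does establish $|X|=2a+1$ before invoking them, so the logic is in the right order.
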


Lov\'{a}sz \cite{L87} showed the following important theorem that plays an essential role in our proof.
\begin{thm}\label{brick-brace}
A matching covered graph has no nontrivial tight cuts if and only if it is either a brick (which is non-bipartite) or a brace.
\end{thm}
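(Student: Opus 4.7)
The plan is to prove the two directions of the equivalence separately, with Lemmas \ref{Tutte-cor} and \ref{tight-cut-bipartite} as the principal tools.

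For the forward direction, assume $G$ is a brick or a brace; I show $G$ has no nontrivial tight cut. If $G$ is a brace and $\partial(X)$ is a nontrivial tight cut of $G(A,B)$, then Lemma \ref{tight-cut-bipartite}, applied with $T:=\overline{X}_{-}$, gives $N(T)\subseteq\overline{X}_{+}$ and hence $|N(T)|\leq|T|+1$; nontriviality forces $1\leq|T|\leq|A|-2$, whereas a standard Hall-type reformulation of 2-extendability yields $|N(T)|\geq|T|+2$ for any such $T$---a contradiction. If $G$ is a brick, I argue that any nontrivial tight cut in a matching covered graph forces either a nontrivial barrier (excluded by bicriticality via Lemma \ref{Tutte-cor}(i)) or a 2-vertex-cut (excluded by 3-connectedness), both of which are ruled out in a brick.

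For the reverse direction, split on bipartiteness. Suppose first $G$ is non-bipartite with no nontrivial tight cut; I show $G$ is a brick. For bicriticality, assume $G$ has a nontrivial barrier $S$. Some odd component $K$ of $G-S$ must satisfy $|V(K)|\geq 3$: otherwise every odd component of $G-S$ is a singleton and, combined with the independence of $S$ (Lemma \ref{Tutte-cor}(iii)) and the absence of even components of $G-S$ (Lemma \ref{Tutte-cor}(ii)), the graph $G$ is bipartite with bipartition $(S,V(G)\setminus S)$. A parity argument then shows that every perfect matching of $G$ contains exactly one edge of $\partial(V(K))$, so $\partial(V(K))$ is a nontrivial tight cut ($|V(K)|\geq 3$ and $|\overline{V(K)}|\geq|S|\geq 2$), contradicting the hypothesis. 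For 3-connectedness, a parallel analysis of a putative 2-vertex-cut $\{u,v\}$, using the parities of the components of $G-\{u,v\}$ and the bicriticality just established, again produces a nontrivial tight cut. Suppose now $G(A,B)$ is bipartite with no nontrivial tight cut; I show $G$ is a brace. If not, there exist disjoint edges $a_1b_1,a_2b_2$ lying in no common perfect matching; a Hall-type argument then yields $Y\subseteq A\setminus\{a_1,a_2\}$ with $|N(Y)\cap(B\setminus\{b_1,b_2\})|\leq|Y|-1$, and unpacking this produces a vertex set $X\subseteq V(G)$ satisfying both conditions of Lemma \ref{tight-cut-bipartite}---a nontrivial tight cut, again a contradiction.

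The main obstacle is the reverse direction: the parity bookkeeping required to promote a local obstruction (a nontrivial barrier, a 2-vertex-cut, or a failed 2-extension) into a tight cut whose \emph{both} sides have at least two vertices. In particular, securing the existence of an odd component of $G-S$ with $\geq 3$ vertices in the non-bipartite case is the crux; this is exactly where the non-bipartite and bipartite cases diverge and why a single unified argument is not available.
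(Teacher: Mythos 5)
The paper does not prove this theorem; it quotes it from Lov\'asz \cite{L87} (it is the classical Edmonds--Lov\'asz--Pulleyblank tight cut theorem), so there is no in-paper argument to compare yours against, and I am judging your outline on its own. Most of it is sound. The brace half of the forward direction is correct: with $T=\overline{X}_{-}$, Lemma \ref{tight-cut-bipartite} does give $N(T)\subseteq\overline{X}_{+}$, hence $|N(T)|\leq|T|+1$, while Lemma \ref{brace} plus Hall's theorem gives $|N(T)|\geq|T|+2$ for $1\leq|T|\leq|A|-2$, and nontriviality of the cut puts $|T|$ in that range. The entire reverse direction is also genuinely elementary and your sketches check out: a nontrivial barrier $S$ of a non-bipartite $G$ must leave some odd component of order at least $3$ (otherwise $S$ independent plus singleton components makes $G$ bipartite), and $\partial(V(K))$ for such a $K$ is a nontrivial tight cut; a $2$-vertex-cut of a bicritical graph similarly yields one; and in the bipartite case a Hall violator $Y$ for $G-\{a_1,a_2,b_1,b_2\}$ produces $X=Y\cup N(Y)$ with $|N(Y)|=|Y|+1$, which satisfies both conditions of Lemma \ref{tight-cut-bipartite}.

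The genuine gap is the forward direction for bricks. The sentence ``I argue that any nontrivial tight cut in a matching covered graph forces either a nontrivial barrier \dots{} or a 2-vertex-cut'' is not a proof step: it is precisely the statement of the Edmonds--Lov\'asz--Pulleyblank theorem, i.e., essentially all of the nontrivial content of the result you are proving. No parity bookkeeping delivers it --- from a nontrivial tight cut $\partial(X)$ of a $3$-connected graph one must manufacture two vertices whose deletion leaves no perfect matching, and the known arguments (Lov\'asz's via the matching lattice and LP duality, or Szigeti's later combinatorial proof) are substantial. Your closing paragraph also misdiagnoses where the difficulty lies: you call the reverse direction ``the crux,'' but that is the easy half; the hard half is exactly the step you waved through. (A minor separate point: as stated the theorem has degenerate exceptions such as $C_4$, which is matching covered with no nontrivial tight cuts yet fails the paper's definition of a brace requiring at least six vertices; that is a convention issue with the statement, not a flaw in your argument.)
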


\begin{lem}[\cite{P86}]\label{brace}
A connected bipartite graph $G(A,B)$ with at least 6 vertices is a brace if and only if $G-\{a_1,a_2,b_1,b_2\}$ has a perfect matching for any four distinct vertices $a_1,a_2\in A$ and $b_1,b_2\in B$.
\end{lem}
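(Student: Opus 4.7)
The plan is to prove the two directions separately. The sufficiency is a direct consequence of 2-extendability: given any two disjoint edges $e_1=a_1b_1$ and $e_2=a_2b_2$ of $G$, their four endpoints are distinct vertices of the required form, so the hypothesis yields a perfect matching $M'$ of $G-\{a_1,a_2,b_1,b_2\}$, and $M'\cup\{e_1,e_2\}$ is a perfect matching of $G$ extending $e_1,e_2$. A short preliminary step is to verify that $G$ does admit a perfect matching: being connected with $|V(G)|\geq 6$, $G$ contains two disjoint edges (otherwise all edges of $G$ would share a common vertex, forcing $G$ to be a star, contradicting the existence of the four distinct vertices required by the hypothesis), and then the same extension argument gives a perfect matching.

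The necessity is the interesting direction. I would reduce it, via Hall's theorem, to the following claim extracted from Theorem \ref{brick-brace} and Lemma \ref{tight-cut-bipartite}: \emph{if $G(A,B)$ is a brace, then $|N_G(S)|\geq|S|+2$ for every $S\subseteq A$ with $1\leq|S|\leq|A|-2$.}

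I would prove the claim by contrapositive, producing a nontrivial tight cut whenever the inequality fails. Suppose $|N_G(S)|\leq|S|+1$; since $G$ is matching covered, Hall's theorem forces $|N_G(S)|\in\{|S|,|S|+1\}$. Set
\[Y:=\begin{cases}S\cup N_G(S),&\text{if }|N_G(S)|=|S|+1,\\[2pt] S\cup N_G(S)\cup\{b^*\},&\text{if }|N_G(S)|=|S|,\end{cases}\]
where $b^*\in B\setminus N_G(S)$ is arbitrary (available since $|B\setminus N_G(S)|\geq|A|-|S|\geq 2$). In both cases $|Y\cap A|=|S|$ and $|Y\cap B|=|S|+1$, so $|Y|=2|S|+1$ is odd and $|Y_+|=|Y_-|+1$. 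Moreover $N_G(S)\subseteq Y$ gives $E_G(S,\overline{Y}\cap B)=\emptyset$, which is exactly condition (ii) of Lemma \ref{tight-cut-bipartite}. Hence $\partial(Y)$ is a tight cut, and the bounds $|Y|\geq 3$ and $|\overline{Y}|=2|A|-2|S|-1\geq 3$ (both from $1\leq|S|\leq|A|-2$) make it nontrivial, contradicting Theorem \ref{brick-brace}.

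With the claim in hand, the necessity becomes a one-line Hall argument: for any $S\subseteq A\setminus\{a_1,a_2\}$ we have $|S|\leq|A|-2$, so the claim gives $|N_G(S)|\geq|S|+2$, hence $|N_{G-\{a_1,a_2,b_1,b_2\}}(S)|\geq|N_G(S)|-2\geq|S|$, and Hall's theorem produces a perfect matching of $G-\{a_1,a_2,b_1,b_2\}$. The main technical obstacle is the tight-cut construction inside the claim: the parity of $|Y|$, the imbalance $|Y_+|-|Y_-|=1$, and the no-edges condition must be arranged simultaneously, and the small $b^*$ adjustment is what allows one to unify the two subcases $|N_G(S)|=|S|$ and $|N_G(S)|=|S|+1$.
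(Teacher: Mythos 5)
The paper offers no proof of this lemma: it is quoted from Plummer \cite{P86} as a known result, so there is nothing internal to compare your argument against. Taken on its own, your proof is correct. The sufficiency direction is the routine one (two disjoint edges $a_1b_1,a_2b_2$ plus a perfect matching of $G-\{a_1,a_2,b_1,b_2\}$ give a perfect matching containing both). For necessity you reduce to the surplus condition $|N_G(S)|\geq|S|+2$ for $1\leq|S|\leq|A|-2$ and obtain it by converting any violating $S$ into a nontrivial tight cut $\partial(Y)$; I checked the parity of $|Y|$, the imbalance $|Y_+|=|Y_-|+1$, the condition $E(Y_-,\overline{Y}_-)=\emptyset$, and the nontriviality bounds, and they all hold, so Theorem \ref{brick-brace} and Lemma \ref{tight-cut-bipartite} do finish the claim. (The subcase $|N_G(S)|=|S|$ is actually vacuous in a connected matching covered bipartite graph, but treating it costs nothing.) Plummer's own route is the direct equivalence between $2$-extendability and this surplus condition via Hall's theorem; your detour through tight cuts derives the same condition using only machinery already quoted in this paper, which is a legitimate trade. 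One small caveat: as transcribed here the lemma is literally false for stars such as $K_{1,5}$, where the right-hand condition is vacuously true, and your parenthetical justification that the hypothesis ``requires the existence of the four distinct vertices'' is not quite right, since the condition is universally quantified and hence vacuous there. The intended reading (as in \cite{P86}) assumes $|A|=|B|\geq 3$, under which your existence argument for two disjoint edges and for a perfect matching is sound.
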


\begin{lem}[\cite{LK20}]\label{non-brace}
Let $G$ be a bipartite matching covered graph that is not a brace.
Then $G$ has a nontrivial tight cut $C$ such that one of the two $C$-contractions is a brace.
\end{lem}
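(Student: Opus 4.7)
The plan is to pick a vertex subset $X\subseteq V(G)$ such that $\partial(X)$ is a nontrivial tight cut of $G$ and $|X|$ is as small as possible; such an $X$ exists by Theorem \ref{brick-brace}, because $G$ is bipartite, matching covered, and not a brace. I will then show that the $\partial(X)$-contraction $G_1:=G/(\overline{X}\to\overline{x})$ is a brace, which proves the lemma. First I would check that $G_1$ is both matching covered (contractions of tight cuts are matching covered, as noted in the paragraph preceding Lemma \ref{3-cut}) and bipartite: by Lemma \ref{tight-cut-bipartite} every edge of $\partial(X)$ runs between $X_{+}$ and $\overline{X}_{+}$, so placing $\overline{x}$ in the colour class opposite to that of $X_{+}$ yields a bipartition of $G_1$.

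Next, suppose for contradiction that $G_1$ is not a brace. Then Theorem \ref{brick-brace} provides a nontrivial tight cut $\partial_{G_1}(Y)$ of $G_1$; after replacing $Y$ by $V(G_1)\setminus Y$ if necessary, I may assume $\overline{x}\notin Y$, so $Y\subseteq X$ and $2\le|Y|\le|X|-1$. The key step is to lift $\partial_{G_1}(Y)$ back to a nontrivial tight cut of $G$ that is strictly smaller than $\partial(X)$. Observe that the edges of $\partial_G(Y)$ and $\partial_{G_1}(Y)$ are in a natural bijection: edges from $Y$ to $X\setminus Y$ are common to $G$ and $G_1$, while edges from $Y$ to $\overline{X}$ in $G$ correspond to edges from $Y$ to $\overline{x}$ in $G_1$. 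Given any perfect matching $M$ of $G$, the tight cut $\partial(X)$ meets $M$ in exactly one edge $uv$ with $u\in X$, and $M_1:=(M\cap E(G[X]))\cup\{u\overline{x}\}$ is a perfect matching of $G_1$; under the bijection one then obtains $|M\cap\partial_G(Y)|=|M_1\cap\partial_{G_1}(Y)|=1$, so $\partial_G(Y)$ is tight in $G$. Since $|Y|\ge2$ and $|V(G)\setminus Y|\ge|\overline{X}|+1\ge3$, the cut $\partial_G(Y)$ is nontrivial, and $|Y|<|X|$ contradicts the minimality of $|X|$.

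The main obstacle is the careful but routine bookkeeping in this lifting step: I must verify both that every perfect matching of $G$ really does induce a perfect matching of $G_1$ through the contraction, and that the natural edge bijection between $\partial_G(Y)$ and $\partial_{G_1}(Y)$ preserves the property of ``belonging to the chosen matching''. These two checks are exactly what allow tightness in $G_1$ to be pushed back up to tightness in $G$; once they are in hand, the minimality of $|X|$ immediately forces $G_1$ to be a brace and completes the argument.
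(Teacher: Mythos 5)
The paper does not prove this statement at all: Lemma \ref{non-brace} is quoted from \cite{LK20} as a known result, so there is no in-paper proof to compare against. Your argument is the standard ``minimal shore'' proof of this fact and it is essentially correct. The two checks you flag as the main obstacle do go through: since $\partial(X)$ is tight, exactly one edge $uv$ of a perfect matching $M$ crosses it, $(M\cap E(G[X]))\cup\{u\overline{x}\}$ is indeed a perfect matching of $G_1$, and the natural bijection between $\partial_G(Y)$ and $\partial_{G_1}(Y)$ (for $Y\subseteq X$) matches the crossing edges of $M$ with those of $M_1$, so tightness of $\partial_{G_1}(Y)$ pulls back to tightness of $\partial_G(Y)$ in $G$; together with $2\le|Y|<|X|$ this contradicts minimality, and bipartiteness of $G_1$ follows from Lemma \ref{tight-cut-bipartite} exactly as you say. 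Two small points are worth tightening. First, when you conclude from Theorem \ref{brick-brace} that the tight-cut-free bipartite matching covered graph $G_1$ is a brace, you are implicitly using that it cannot be a brick; that is fine since bricks are non-bipartite, but say so. Second, with the paper's definition a brace must have at least $6$ vertices, whereas your minimal $X$ could in principle have $|X|=3$, making $G_1$ a (multi-)$C_4$; this degenerate case is a convention issue already latent in Theorem \ref{brick-brace} as stated (in the standard literature $C_4$ and its parallel-edge expansions are admitted as braces), and it does not arise in the paper's applications to cubic graphs, but a careful write-up should either adopt that convention explicitly or rule the case out.
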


In the sequel, we show some results helpful to our proofs.

\begin{lem}\label{p-3}
Let $G$ be a 3-connected cubic graph with a nontrivial barrier $S$. Then the following statements hold.\\
{\rm (i)} For any nontrivial component $K$ of $G-S$, $\partial{(V(K))}$ is a tight 3-cut and is a matching,\\
{\rm (ii)} For any nontrivial component $K$ of $G-S$, each $\partial{(V(K))}$-contraction is a 3-connected simple cubic graph,\\
{\rm (iii)} If $G$ is non-bipartite, then $G-S$ has a nontrivial non-bipartite component.
\end{lem}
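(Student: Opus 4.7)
The plan is to handle the three parts in turn, leaning on the earlier lemmas about barriers, tight cuts, and cubic $3$-connected graphs.

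For part (i), I would start from the fact that $S$ is a barrier, so $G-S$ has exactly $|S|$ odd components $K_1,\ldots,K_{|S|}$, and by Lemma \ref{Tutte-cor}, $S$ is independent and there are no even components. In any perfect matching $M$ of $G$, each $K_i$ contains at least one vertex matched (along an edge of $\partial(V(K_i))$) into $S$, while the independent set $S$ can support at most $|S|$ such edges; hence each odd component is entered by exactly one edge of $M$. Thus $\partial(V(K))$ is a tight cut for every odd component $K$. Since $G$ is cubic and $2$-connected, Lemma \ref{3-cut} forces $\partial(V(K))$ to be a $3$-cut; since $K$ is nontrivial and $|S|\ge 2$ the cut is nontrivial, so Lemma \ref{cut-matching} yields that it is a matching.

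For part (ii), the contracted vertex in either $\partial(V(K))$-contraction has degree $|\partial(V(K))|=3$ and every other vertex retains its original degree $3$, so both contractions are cubic. They are simple because $\partial(V(K))$ being a matching (from (i)) prevents the appearance of parallel edges or loops. A $3$-connected graph is $3$-edge-connected, so Lemma \ref{G/X} says both contractions are $3$-edge-connected, and in a simple cubic graph $3$-edge-connectivity is equivalent to $3$-connectivity, giving the desired conclusion.

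Part (iii) is the main obstacle and I would argue by contradiction: suppose every nontrivial component of $G-S$ is bipartite. For each such component $K$, fix a bipartition $(A_K,B_K)$; since $|V(K)|$ is odd we may assume $|A_K|=|B_K|+1$. The key observation is that every endpoint in $V(K)$ of an edge of $\partial(V(K))$ lies in $A_K$: indeed, since $G$ is matching covered, any edge $uv$ with $u\in V(K)$, $v\in S$ lies in some perfect matching $M$, and because $\partial(V(K))$ is tight (from (i)), the remaining edges of $M$ inside $K$ form a perfect matching of $K-u$, which in the bipartite graph $K$ forces $u\in A_K$.

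Using this, I would define a bipartition $(A',B')$ of $V(G)$ by placing $S$ together with all $B_K$'s into one class $B'$, and all $A_K$'s together with the singleton components of $G-S$ into the other class $A'$. The edges inside each $K$ go between $A_K$ and $B_K$ by bipartiteness of $K$; the edges of $\partial(S)$ go from $S\subseteq B'$ into $A'$ by the key observation (and trivially for singletons); and $S$ being independent forbids edges inside $B'$ that come from $S$, while $A_K$'s and the singletons lie in different components of $G-S$ so there are no edges inside $A'$. This exhibits $G$ as bipartite, contradicting the hypothesis and completing the proof.
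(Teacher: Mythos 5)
Your proof is correct. Parts (i) and (ii) follow the paper's argument essentially verbatim: counting the matching edges entering the $|S|$ odd components of $G-S$ to get tightness of $\partial(V(K))$, then invoking Lemmas \ref{3-cut}, \ref{cut-matching} and \ref{G/X} together with the equivalence of $3$-connectivity and $3$-edge-connectivity for cubic graphs.

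Part (iii) arrives at the same global bipartition of $G$ as the paper, but by a genuinely different key step. The paper shows $E(B_i,S)=\emptyset$ and $|A_i|=|B_i|+1$ by a purely local degree count, $|E(H_i)|=3|A_i|-l_i=3|B_i|-(3-l_i)$ with $l_i=|E(A_i,S)|$, which forces $l_i=3$; this uses only that $|\partial(V(H_i))|=3$ from (i). You instead use matching-coveredness together with the tightness of $\partial(V(K))$: each boundary edge $uv$ lies in a perfect matching $M$, tightness makes $M$ restricted to $K$ a perfect matching of $K-u$, and in a bipartite $K$ this forces $u$ into the majority class $A_K$ --- in effect re-deriving locally the half of Lemma \ref{tight-cut-bipartite} asserting $E(X_-,\overline{X}_-)=\emptyset$. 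Your route is slightly heavier (it needs tightness and matching-coveredness, not just the cut size) but is more structural and does not depend on the cubic degree count. One small imprecision: oddness of $|V(K)|$ only gives that $|A_K|-|B_K|$ is odd, not that it equals $1$; however, your own observation that $K-u$ has a perfect matching independently forces $|A_K|=|B_K|+1$ (given the normalization $|A_K|\geq|B_K|$), so nothing is lost.
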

\begin{proof}
(i)\ Since $G$ is matching covered (Lemma \ref{matching-covered}), $K$ is odd by Lemma \ref{Tutte-cor}(ii).
For any perfect matching $M$ of $G$, $|\partial{(V(K))}\cap M|=1$ as $o(G-S)=|S|$. So $\partial{(V(K))}$ is a tight cut.
By Lemma \ref{3-cut}, $\partial{(V(K))}$ is a 3-cut.
Further, as $\partial{(V(K))}$ is nontrivial, $\partial{(V(K))}$ is a matching by Lemma \ref{cut-matching}, and so (i) holds.

(ii) Let $G'$ be a $\partial{(V(K))}$-contraction.
As $G$ is cubic, $G'$ is also a simple cubic graph by (i).
Since $|\partial{(V(K))}|=3$ by (i) and the fact that the edge connectivity of a cubic graph is equal to its vertex connectivity, $G'$ is 3(-edge)-connected by Lemma \ref{G/X}, and so (ii) holds.

(iii) Otherwise, suppose that all nontrivial components of $G-S$ are bipartite graphs $H_i(A_i,B_i)$, $1\leq i\leq t$, with $|A_i|\geq|B_i|$, where $1\leq t\leq|S|$.
Set $l_i=|E(A_i,S)|$, $1\leq i\leq t$.
By (i), $|\partial{(V(H_i))}|=3$ and so $0\leq l_i\leq 3$.
Since $G$ is cubic, $|E(H_i)|=3|A_i|-l_i=3|B_i|-(3-l_i)$, which implies that $l_i=3$, $E(B_i,S)=\emptyset$ and $|A_i|=|B_i|+1$.
Then $(\cup_{i=1}^tB_i\cup S,\overline{\cup_{i=1}^tB_i\cup S})$ is a bipartition of $G$, contradicting that $G$ is non-bipartite. So (iii) holds.
\end{proof}

\begin{lem}\label{G/X-nice}
Let $G$ be a 2-connected cubic graph. Suppose that $\partial{(X)}$ is a nontrivial tight cut of $G$ such that $G/\overline{X}$ is simple.
If $u\in X$ is nice in $G/\overline{X}$, then $u$ is nice in $G$.
\end{lem}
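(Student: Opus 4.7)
The plan is to build a perfect matching of $G-N_G[u]$ by stitching together two pieces: one coming from the niceness of $u$ in $G/\overline{X}$, and one coming from the matching coveredness of $G/X$. First I would apply Lemma~\ref{3-cut} to get $\partial(X)=\{e_1,e_2,e_3\}$ with $e_i=x_iy_i$, $x_i\in\overline{X}$, $y_i\in X$; since $G/\overline{X}$ is simple, the vertices $y_1,y_2,y_3$ are distinct, so $G/\overline{X}$ is a 2-connected cubic graph. Let $\bar{x}$ denote the vertex of $G/\overline{X}$ obtained by contracting $\overline{X}$, and let $M_1$ be the perfect matching of $(G/\overline{X})-N_{G/\overline{X}}[u]$ guaranteed by the assumption that $u$ is nice in $G/\overline{X}$.

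Next I would split on whether $u$ is an endpoint of some edge of $\partial(X)$. If $u\notin\{y_1,y_2,y_3\}$, then $\bar{x}$ survives in $(G/\overline{X})-N_{G/\overline{X}}[u]$ and is matched by $M_1$ via some edge $\bar{x}y_j$; removing that edge turns the remainder of $M_1$ into a matching $M_1'$ of $G[X]$ covering $X\setminus(N_G[u]\cup\{y_j\})$. If instead $u=y_j$ for some $j$, then $\bar{x}\in N_{G/\overline{X}}[u]$, so $M_1$ already lies inside $G[X]$ and covers $X\setminus N_G[u]$; here I simply set $M_1':=M_1$, and I note that $x_j\in N_G(u)$. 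Then, in either case, because $\partial(X)$ is tight the contraction $G/X$ is matching covered, so the edge of $G/X$ corresponding to $e_j$ lies in some perfect matching $M_2$ of $G/X$; removing that edge from $M_2$ gives a matching $M_2'$ inside $G[\overline{X}]$ covering $\overline{X}\setminus\{x_j\}$. Setting $M:=M_1'\cup M_2'\cup\{e_j\}$ in the first case and $M:=M_1'\cup M_2'$ in the second case yields a perfect matching of $G-N_G[u]$, witnessing that $u$ is nice in $G$.

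The main obstacle is really just a bookkeeping one: tracking which vertices of $X$ and $\overline{X}$ are covered after the two contractions are undone, and making the choice of the bridging edge $e_j$ consistent between $M_1$ and $M_2$. Vertex-disjointness of the assembled pieces is automatic, since $M_1'$ lies entirely in $G[X]$ and $M_2'$ entirely in $G[\overline{X}]$; and once the case split is set up, no structural ingredient beyond matching coveredness of the two $\partial(X)$-contractions is needed.
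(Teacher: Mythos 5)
Your proof is correct and follows essentially the same route as the paper's: in both arguments one takes the perfect matching $M_1$ of $(G/\overline{X})-N_{G/\overline{X}}[u]$, identifies the unique cut edge $e_j$ determined by $M_1\cup\partial(u)$ at the contracted vertex, uses that $G/X$ is matching covered (as a tight-cut contraction of the matching covered graph $G$) to get a perfect matching through the corresponding edge, and splices the two pieces together with the same two-case split on whether $u$ is incident with $\partial(X)$. The only cosmetic difference is that you invoke Lemma~\ref{3-cut} to name the three cut edges explicitly, whereas the paper only uses that $G/\overline{X}$ is simple and cubic; the substance is identical.
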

\begin{proof}
Set $G_1=G/(\overline{X}\rightarrow \overline{x})$ and $G_2=G/X$.
Since $u$ is nice in $G_1$, $G_1-N_{G_1}[u]$ has a perfect matching, say $M_1$.
As $G_1$ is simple and $u\neq \overline{x}$, $(M_1\cup \partial_{G_1}{(u)})\cap \partial_{G_1}{(\overline{x})}$ consists of exactly one edge, say $e_1$, which corresponds to the edge $e_2$ in $G_2$ and the edge $e$ in $G$.
Since $G$ is matching covered by Lemma \ref{matching-covered}, $G_2$ is also matching covered.
So by removing the two ends of $e_2$ from $G_2$, the left graph has a perfect matching, denoted by $M_2$.
If $e_1\in \partial_{G_1}{(u)}$, then $M_1\cup M_2$ is a perfect matching of $G-N_G[u]$; if $e_1\notin \partial_{G_1}{(u)}$, then $(M_1\setminus \{e_1\})\cup M_2\cup \{e\}$ is a perfect matching of $G-N_G[u]$.
Therefore, $u$ is nice in $G$.
\end{proof}

Lemma \ref{G/X-nice} can also be deduced by Lemma 21 (in which $G/\overline{X}$ and $G/X$ are both simple) in \cite{KSS}.

The union of two graphs $G_1$ and $G_2$ is the graph $G_1\cup G_2$ with vertex set $V(G_1)\cup V(G_2)$ and edge set $E(G_1)\cup E(G_2)$.
The following result is a simple observation.
\begin{lem}\label{bi-union}
Let $G_1$ and $G_2$ be two vertex-disjoint bipartite graphs.
If $u_{i1}$ and $u_{i2}$ are two vertices of $G_i$ in different color classes for $i=1,2$, then $(G_1\cup G_2)+\{u_{11}u_{2j},u_{12}u_{2(3-j)}\}$ is a  bipartite graph for $j=1,2$.
\end{lem}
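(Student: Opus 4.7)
The plan is to simply exhibit a proper 2-coloring for each of the two values of $j$. Let $(A_i,B_i)$ be the bipartition of $G_i$ and, without loss of generality, assume $u_{i1}\in A_i$ and $u_{i2}\in B_i$ for $i=1,2$, since the hypothesis says $u_{i1}$ and $u_{i2}$ lie in different color classes.

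First I would handle the case $j=2$, where the two new edges are $u_{11}u_{22}$ and $u_{12}u_{21}$. I would take the natural bipartition $(A_1\cup A_2,\, B_1\cup B_2)$ of $G_1\cup G_2$; the original edges of each $G_i$ cross it, and the two new edges also cross it because $u_{11}\in A_1$, $u_{22}\in B_2$, $u_{12}\in B_1$, $u_{21}\in A_2$.

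For the case $j=1$, where the new edges are $u_{11}u_{21}$ and $u_{12}u_{22}$, I would swap the roles of the color classes of $G_2$ and use the bipartition $(A_1\cup B_2,\, B_1\cup A_2)$. Since swapping the two color classes of a single connected component of a bipartite graph preserves properness, the original edges still cross; meanwhile $u_{11}\in A_1$ and $u_{21}\in A_2$ land on opposite sides, and similarly for $u_{12}$ and $u_{22}$.

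There is no real obstacle here: the only small point to be careful about is to verify that the chosen side-assignment is consistent for both added edges simultaneously, which is immediate from the hypothesis that $u_{i1}$ and $u_{i2}$ are in different color classes of $G_i$.
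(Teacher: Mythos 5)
Your proof is correct: exhibiting the bipartition $(A_1\cup A_2,\,B_1\cup B_2)$ for $j=2$ and $(A_1\cup B_2,\,B_1\cup A_2)$ for $j=1$ is exactly the routine verification, and the paper itself gives no proof, stating the lemma as ``a simple observation.'' The only cosmetic remark is that in the $j=1$ case you swap the entire color classes of $G_2$ rather than of a single component, which is even simpler and still clearly preserves properness.
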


\begin{lem}\label{p-2-cut}
Let $G$ be a 2-connected cubic graph. Assume that $\partial{(X)}=\{ab,cd\}$ is a 2-cut of $G$ such that $a,c\in X$. Then the following statements hold.\\
{\rm (i)} $\partial{(X\setminus\{a\})}$ is a tight cut of $G$,\\
{\rm (ii)} $G[X\setminus\{a,c\}]$ and $G[\overline{X}\setminus \{b,d\}]$ both have perfect matchings,\\
{\rm (iii)} If $ac\notin E(G)$ and $u\in X$ is nice in $G[X]+ac$, then $u$ is nice in $G$,\\
{\rm (iv)} If $G[X]$ is bipartite, then $a$ and $c$ are in distinct color classes of $G[X]$.
\end{lem}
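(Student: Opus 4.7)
The plan is to prove (i)--(iv) in order, with (iii) building on (i) through Lemma \ref{G/X-nice}.

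For (i), I would first identify the cut explicitly: since $a$ has degree $3$ with exactly one edge leaving $X$ (namely $ab$), its two remaining neighbors $u_1, u_2$ lie in $X$, so $\partial(X \setminus \{a\}) = \{au_1, au_2, cd\}$ is a $3$-cut. Tightness then follows from a parity argument: the identity $3|X| = 2|E(G[X])| + 2$ forces $|X|$ to be even, so $|X \setminus \{a\}|$ is odd and every perfect matching $M$ meets $\partial(X \setminus \{a\})$ in an odd number of edges; this number is not $3$, since $au_1$ and $au_2$ are both incident to $a$, so it must be exactly $1$. For (ii), the same parity statement shows $|\partial(X) \cap M|$ is even for every perfect matching $M$. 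By Lemma \ref{matching-covered}, take $M$ containing $ab$; then $cd \in M$ as well, and $M \setminus \{ab, cd\}$ decomposes into a perfect matching of $G[X \setminus \{a, c\}]$ and a perfect matching of $G[\overline{X} \setminus \{b, d\}]$.

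For (iii), I would apply Lemma \ref{G/X-nice} to the tight cut $\partial(X \setminus \{a\})$ from (i). The contraction $G/(\overline{X} \cup \{a\})$ is a cubic graph in which the contracted vertex is joined to $u_1, u_2$ and $c$; since $ac \notin E(G)$, these three vertices are distinct, so the contraction is simple. Identifying the contracted vertex with $a$ produces an isomorphism with $G[X] + ac$ that preserves the neighborhoods of all vertices of $X \setminus \{a\}$. Consequently, if $u \in X \setminus \{a\}$ is nice in $G[X]+ac$, then Lemma \ref{G/X-nice} yields that $u$ is nice in $G$. The remaining case $u = a$ is handled symmetrically by applying (i) with the roles of $a$ and $c$ swapped: $\partial(X \setminus \{c\})$ is a tight cut, $G/(\overline{X}\cup\{c\}) \cong G[X]+ac$ via an analogous identification, and $a \in X \setminus \{c\}$ is then covered by Lemma \ref{G/X-nice}.

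For (iv), the argument is a short edge-count on the bipartition $(A_X, B_X)$ of $G[X]$. Each vertex of $X \setminus \{a, c\}$ has degree $3$ in $G[X]$, while $a$ and $c$ each have degree $2$. If $a$ and $c$ were in the same color class $A_X$, equating $\sum_{v \in A_X} d_{G[X]}(v) = 3|A_X| - 2$ with $\sum_{v \in B_X} d_{G[X]}(v) = 3|B_X|$ would force $|A_X| - |B_X| = 2/3$, a contradiction. Hence $a$ and $c$ lie in distinct color classes. The main technical obstacle is (iii): it requires a clean case split on whether $u = a$, plus a careful check that the contraction is isomorphic to $G[X] + ac$ under the identification of the contracted vertex with $a$ (or $c$), with $ac \notin E(G)$ guaranteeing simplicity so that Lemma \ref{G/X-nice} is applicable.
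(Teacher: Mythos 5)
Your proof is correct and follows essentially the same route as the paper's: parity of $|X|$ for (i) and (ii), Lemma \ref{G/X-nice} applied to the tight cut $\partial(X\setminus\{a\})$ for (iii), and an edge count over the bipartition for (iv). The only divergence is the subcase $u=a$ of (iii), which the paper settles by explicitly building a perfect matching of $G-N_G[a]$ as $M'\cup\{cd\}\cup M''$ using (ii), whereas you invoke the symmetric tight cut $\partial(X\setminus\{c\})$ and apply Lemma \ref{G/X-nice} a second time — both arguments are valid, and yours has the small advantage of not needing (ii) at that point.
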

\begin{proof}
(i)\ Since $G$ is cubic, $2|E(G[X])|=\sum_{x\in X}d_{G[X]}(x)=3|X|-2$.
So $|X|$ is even. Lemma \ref{matching-covered} implies that $G$ has a perfect matching, say $M$.
Then $ab\in M$ if and only if $cd\in M$. 
Since $\partial{(X\setminus\{a\})}=(\partial{(a)\setminus \{ab\}})\cup \{cd\}$, $|M\cap\partial{(X\setminus\{a\})}|=1$.
So $\partial{(X\setminus\{a\})}$ is a tight cut of $G$. (Fig. \ref{tight-cut}(b) illustrates an example).

(ii)\ Since $G$ is matching covered (Lemma \ref{matching-covered}), $G$ has a perfect matching containing $ab$, say $M$.
Then $\{ab,cd\}\subseteq M$ by the discussion in (i). So $M\setminus\{ab,cd\}$ is a perfect matching of $G-\{a,b,c,d\}$, and thus $G-\{a,b,c,d\}$ consists of two vertex-disjoint subgraphs $G[X\setminus\{a,c\}]$ and $G[\overline{X}\setminus \{b,d\}]$ which each has a perfect matching.

(iii)\ Set $G'=G[X]+ac$.
If $u\in X\setminus \{a\}$, since $\partial{(X\setminus\{a\})}$ is a tight cut by (i) and $G/(\overline{X}\cup\{a\})$ is simple, $u$ is nice in $G$ by Lemma \ref{G/X-nice}.
If $u=a$, then $N_{G}[a]=(N_{G'}[a]\setminus\{c\})\cup \{b\}$.
Since $u$ is nice in $G'$, $G'-N_{G'}[a]$ has a perfect matching, say $M'$.
On the other hand, (ii) shows that $G[\overline{X}\setminus\{b,d\}]$ has a perfect matching, say $M''$.
Then $M'\cup\{cd\}\cup M''$ is a perfect matching of $G-N_{G}[a]$.
Therefore, $u$ is nice in $G$.

(iv)\ If $ac\in E(G)$, then we are done. So suppose that $ac\notin E(G)$.
Let $(A_X,B_X)$ be a bipartition of $G[X]$ and $l=|E(A_X,\overline{X})|$. Then $l=0,1,2$.
Since $G$ is cubic, $|E(G[X])|=3|A_X|-l=3|B_X|-(2-l)$.
So $l=1$ and $|A_X|=|B_X|$, which implies that $a$ and $c$ are in distinct color classes of $G[X]$.
\end{proof}

\begin{lem}\label{p-2-cut-min}
Let $G$ be a 2-connected non-bipartite cubic graph with a 2-cut.
Then $G$ has a 2-cut $\partial{(X)}=\{ab,cd\}$ such that $a,c\in X$ and $G[X]+ac$ is a 2-connected non-bipartite simple cubic graph with the minimum number of vertices.
\begin{proof}
Assume that $\partial{(Y)}=\{a'b',c'd'\}$ is a 2-cut of $G$ such that $a',c'\in Y$.
We claim that at least one of $G[Y]$ and $G[\overline{Y}]$ is non-bipartite.
Otherwise, by Lemma \ref{p-2-cut}(iv), $a'$ and $c'$ (resp. $b'$ and $d'$) lie in different color classes of $G[Y]$ (resp. $G[\overline{Y}]$).
Then $G$ is bipartite by Lemma \ref{bi-union}, a contradiction.
Choose a 2-cut $\partial{(Y)}$ so that $G[Y]$ is a non-bipartite subgraph with the minimum number of vertices.
Then $a'c'\notin E(G)$. Otherwise, $\partial{(Y\setminus\{a',c'\})}$ is also a 2-cut of $G$ as $G$ is cubic.
Since $G[Y]$ is non-bipartite and $G[\{a',c'\}]$ ($=K_2$) is bipartite, we deduce that $G[Y\setminus\{a',c'\}]$ is non-bipartite by Lemmas \ref{p-2-cut}(iv) and \ref{bi-union}, a contradiction to the minimality of $|Y|$.
So $G[Y]+a'c'$ is a 2-connected non-bipartite simple cubic graph.
Further, $|V(G[Y]+a'c')|$ is minimum.
Otherwise, there exists a 2-cut $\partial{(Z)}=\{a''b'',c''d''\}$ such that $a'',c''\in Z$ and $G[Z]+a''c''$ is a 2-connected non-bipartite simple cubic graph with $|Z|<|Y|$.
Lemma \ref{p-2-cut}(iv) implies that $G[Z]$ is also a non-bipartite subgraph of $G$, contradicting the choice of $\partial{(Y)}$.
Therefore, $\partial{(Y)}$ is a desired 2-cut and the lemma holds.
\end{proof}
\end{lem}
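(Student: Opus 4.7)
The plan is to select, among all 2-cuts $\partial(Y)=\{a'b',c'd'\}$ of $G$ with $a',c'\in Y$, one that minimizes $|Y|$ subject to $G[Y]$ being non-bipartite, and then verify that this choice satisfies the stated conclusion. The first step is to check that such a cut exists: for any 2-cut of the given form, at least one of $G[Y]$ or $G[\overline{Y}]$ must be non-bipartite. Indeed, if both were bipartite then Lemma \ref{p-2-cut}(iv) would place $\{a',c'\}$ and $\{b',d'\}$ in distinct color classes of their respective induced subgraphs, and Lemma \ref{bi-union} (applied with $G_1=G[Y]$, $G_2=G[\overline{Y}]$, glued along $a'b'$ and $c'd'$) would force $G$ itself to be bipartite, contradicting the hypothesis. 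So we fix a cut with $G[Y]$ non-bipartite and $|Y|$ minimum.

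The crucial step is to rule out $a'c'\in E(G)$. Suppose it holds; then since $d_{G[Y]}(a')=d_{G[Y]}(c')=2$, each of $a'$ and $c'$ has exactly one further neighbor in $Y$, call them $a^*$ and $c^*$. A quick check using the 2-connectedness of $G$ shows that $a^*\neq c^*$ and $Y\setminus\{a',c'\}\neq\emptyset$, so $\partial(Y\setminus\{a',c'\})=\{a'a^*,c'c^*\}$ is a 2-cut of $G$. By the minimality of $|Y|$, $G[Y\setminus\{a',c'\}]$ must be bipartite; Lemma \ref{p-2-cut}(iv) then places $a^*$ and $c^*$ in distinct color classes, and Lemma \ref{bi-union} applied with $G_1=G[Y\setminus\{a',c'\}]$ and $G_2=G[\{a',c'\}]\cong K_2$, glued along $a^*a'$ and $c^*c'$, forces $G[Y]$ to be bipartite, a contradiction. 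Hence $a'c'\notin E(G)$, so $G[Y]+a'c'$ is a simple cubic graph; it inherits non-bipartiteness from $G[Y]$, and it is 2-connected because $G$ is: for any $v\in V(G[Y])$, the components of $G[Y]-v$ can reach $\overline{Y}$ only through $a'$ or $c'$, and since $G-v$ stays connected, the new edge $a'c'$ reconnects any pieces.

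It remains to establish the minimality claim. Suppose some 2-cut $\partial(Z)=\{a''b'',c''d''\}$ with $a'',c''\in Z$ were to satisfy the conclusion with $|Z|<|Y|$. Then $G[Z]$ would itself be non-bipartite: if $G[Z]$ were bipartite, Lemma \ref{p-2-cut}(iv) would put $a''$ and $c''$ in distinct color classes, so adding the edge $a''c''$ (joining opposite classes) would preserve bipartiteness, contradicting the non-bipartiteness of $G[Z]+a''c''$. But a non-bipartite $G[Z]$ with $|Z|<|Y|$ directly contradicts the minimality of $|Y|$.

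The main obstacle is the non-edge argument in the second paragraph: it requires constructing the auxiliary cut $\partial(Y\setminus\{a',c'\})$ inside $Y$ and then coordinating Lemmas \ref{p-2-cut}(iv) and \ref{bi-union} to propagate bipartiteness from the smaller subgraph back to $G[Y]$. The checks that $a^*\neq c^*$ and that the removal of $\{a',c'\}$ still leaves a nonempty side rely on 2-connectedness and the parity of $|Y|$, and are the kind of bookkeeping that deserves attention even though each individual step is short.
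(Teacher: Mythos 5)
Your proof is correct and follows essentially the same route as the paper's: the same existence claim via Lemmas \ref{p-2-cut}(iv) and \ref{bi-union}, the same minimal choice of a 2-cut with $G[Y]$ non-bipartite, the same auxiliary cut $\partial(Y\setminus\{a',c'\})$ to exclude $a'c'\in E(G)$ (you phrase it in the contrapositive), and the same minimality argument at the end. The extra checks you flag ($a^*\neq c^*$, $Y\setminus\{a',c'\}\neq\emptyset$, and the 2-connectedness of $G[Y]+a'c'$) are details the paper leaves implicit, and your justifications of them are sound.
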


\section{\normalsize Nice vertices in non-bipartite cubic graphs}
Before stating our results, we introduce some necessary notations. Let $G_1$ and $G_2$ be two vertex-disjoint graphs.
Construct two new graphs as follows.
Suppose that $u\in V(G_1)$ and $v\in V(G_2)$ are two vertices with the same degree. Let $\phi$ be a given bijection between $\partial_{G_1}{(u)}$ and $\partial_{G_2}{(v)}$.
The \emph{splicing $G_1$ at $u$ and $G_2$ at $v$}, denoted by $(G_1\odot G_2)_{u,v,\phi}$, is the graph obtained from the union of $G_1-u$ and $G_2-v$ by joining, for each edge $e\in \partial_{G_1}{(u)}$, the end of $e$ in $V(G_1-u)$ to the end of $\phi(e)$ in $V(G_2-v)$.
If $G_1$ is a cubic graph and $G_2\in \{K_4,K_{3,3}\}$, then, up to isomorphism, $(G_1\odot G_2)_{u,v,\phi}$ depends only on the choice of $u$ in $G_1$, and we denote it simply by $(G_1\odot G_2)_u$ (or $G_1\odot G_2$ if the choice of $u$ is irrelevant).
For example, $K_4\odot K_4=\overline{C_6}$, $\overline{C_6}\odot K_4=R_8$ and $K_{3,3}\odot K_4=K_{3,3}^\triangle$ (see Fig. \ref{cubic}(a)-(c)).

\begin{figure}
\centering
\includegraphics{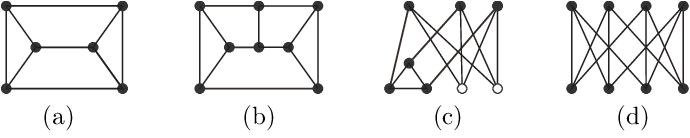}
\caption{\label{cubic} (a)\ $\overline{C_6}$, (b)\ $R_8$, (c)\ $K_{3,3}^\triangle$, (d)\ $H_{4,4}$.}
\end{figure}

Suppose that $e_1=x_1x_2\in E(G_1)$ and $e_2=y_1y_2\in E(G_2)$. The \emph{edge splicing $G_1$ at $e_1$ and $G_2$ at $e_2$}, denoted by $(G_1\oplus G_2)_{e_1,e_2}$ (or simply $(G_1\oplus G_2)_{e_1}$ if no confusion occurs), is the graph obtained from the union of $G_1-e_1$ and $G_2-e_2$ by identifying $x_i$ and $y_i$ into a single vertex for $i=1,2$. 

We can see that $K_{3,3}^\triangle$ has exactly six nice vertices (black) and two non-nice vertices (white); see Fig. \ref{cubic}(c).
Let $\widehat{\mathcal{H}}$ be the set of 3-connected cubic bipartite graphs, $\mathcal{G}_1=\{(K_{3,3}^\triangle\odot H)_{u,v}|u~\text{is not nice in}~K_{3,3}^\triangle$ and $H\in \widehat{\mathcal{H}}\}$ and $\mathcal{G}_2=\{((K_{3,3}^\triangle\odot H_1)_{u_1,v_1}\odot H_2)_{u_2,v_2}|u_i~\text{is not nice in}~K_{3,3}^\triangle,H_i\in \widehat{\mathcal{H}}$ and $i=1,2\}$. For example, see Fig. \ref{splicing}(a) and (b).

\begin{figure}
\centering
\includegraphics{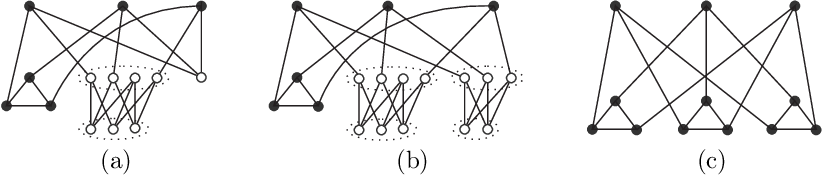}
\caption{\label{splicing} (a)\ A graph in $\mathcal{G}_1$, (b)\ A graph in $\mathcal{G}_2$, (c)\ $((K_{3,3}^\triangle\odot K_4)_u\odot K_4)_v$, where $u$ and $v$ are not nice in $K_{3,3}^\triangle$.}
\end{figure}

Let $L_n$ be a linear quadrangular chain with $n\geq 1$ quadrangles (see Fig. \ref{edge-splicing}(a) for $L_3$) and $\mathcal{H}$ be the set of cubic bipartite graphs.
We refer to an edge with two end vertices of degree 2 as a \emph{22-edge}. 
Set $\mathcal{H^\diamond}=\{(L_n\oplus H)_{e,f}|n\geq 1,e$ is a 22-edge and $H\in \mathcal{H}$\}.
Then every graph in $\mathcal{H^\diamond}$ is bipartite and has precisely one 22-edge.
To replace an edge $e$ of $K_4$ by a graph $H\in \mathcal{H^\diamond}$ is the graph obtained by edge splicing $K_4$ at $e$ and $H$ at the 22-edge.
For $1\leq i\leq 6$, let $\mathcal{F}_i$ be the set of graphs obtained from $K_4$ by replacing $i$ edges of $K_4$ by $i$ graphs in $\mathcal{H^\diamond}$ (not necessarily distinct).
Fig. \ref{edge-splicing}(b) and (c) illustrate an example in $\mathcal{H^\diamond}$ and $\mathcal{F}_2$ respectively.
\begin{figure}
\centering
\includegraphics{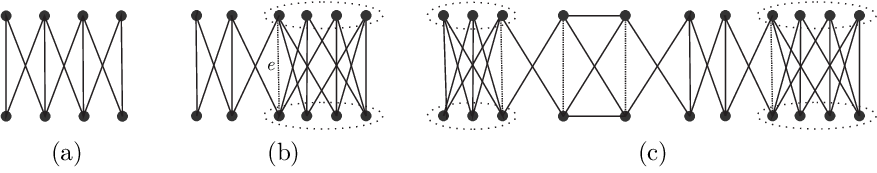}
\caption{\label{edge-splicing} (a)\ $L_3$, (b)\ $(L_2\oplus H_{4,4})_e$, (c)\ A graph in $\mathcal{F}_2$.\ }
\end{figure}

Let $\Upsilon(G)$ denote the number of nice vertices of a cubic graph $G$.
It was shown in \cite{R18} that non-bipartite cubic graphs on at most 6 vertices are $K_4$ and $\overline{C_6}$.
Clearly, $\Upsilon(K_4)=4$, $\Upsilon(\overline{C_6})=6$ and $\Upsilon(K_{3,3}^\triangle)=6$.
Now we can state our main theorem of this section.
\begin{thm}\label{main-1}
Let $G$ be a 2-connected non-bipartite cubic graph. Then\\
{\rm (i)} $\Upsilon(G)\geq 4$. Moreover,  $\Upsilon(G)=4$ if and only if $G\in \{K_4\}\cup (\cup_{i=1}^6\mathcal{F}_i)$.\\
{\rm (ii)} If $G$ is 3-connected and $G\neq K_4$, then $\Upsilon(G)\geq 6$. Moreover, $\Upsilon(G)=6$ if and only if $G\in \{\overline{C_6},K_{3,3}^\triangle\}\cup \mathcal{G}_1\cup\mathcal{G}_2$.
\end{thm}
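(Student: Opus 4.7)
The plan is to prove Theorem~\ref{main-1} by induction on $|V(G)|$, treating parts (i) and (ii) in tandem so that the 3-connected case of (i) can invoke (ii). The bases $K_4$, $\overline{C_6}$, and $K_{3,3}^\triangle$ are checked by hand. For part (i), if $G$ has a 2-cut, apply Lemma~\ref{p-2-cut-min} to obtain a 2-cut $\partial(X)=\{ab,cd\}$ with $a,c\in X$ such that $G':=G[X]+ac$ is a smaller 2-connected non-bipartite simple cubic graph; the inductive hypothesis gives $\Upsilon(G')\geq 4$, and each of these nice vertices (all in $X$) remains nice in $G$ by Lemma~\ref{p-2-cut}(iii). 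If $G$ is 3-connected and $G\neq K_4$, then part~(ii) yields $\Upsilon(G)\geq 6\geq 4$.

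For part (ii), if $G$ is bicritical then Lemma~\ref{bicritical} immediately gives $\Upsilon(G)=|V(G)|\geq 6$. Otherwise Lemma~\ref{Tutte-cor}(i) yields a nontrivial barrier $S$, and Lemma~\ref{p-3}(iii) guarantees a nontrivial non-bipartite component $K$ of $G-S$. Setting $X=V(K)$, Lemma~\ref{p-3}(i,ii) shows that $\partial(X)$ is a tight 3-cut forming a matching $\{x_iy_i\}_{i=1}^{3}$ with $x_i\in X,\ y_i\in\overline{X}$, and that both contractions $G_1=G/\overline{X}$ and $G_2=G/X$ are 3-connected simple cubic, with $G_1$ non-bipartite. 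Inductively, either $G_1=K_4$ (forcing $|X|=3$) which contributes at least three nice vertices in $X$, or $G_1\neq K_4$ contributing at least five; by Lemma~\ref{G/X-nice} these lift to nice vertices of $G$. If $G_2$ is also non-bipartite, the symmetric argument yields enough nice vertices in $\overline{X}$ to exceed six. The delicate case is $G_2$ bipartite: Lemma~\ref{G/X-nice} then gives nothing on the $\overline{X}$ side, and one must argue directly that each cut-endpoint $y_i$ is nice in $G$ by assembling a perfect matching of $G-N_G[y_i]$ out of the two remaining cut edges $x_jy_j,\ x_ky_k$ together with perfect matchings of $G[X\setminus\{x_i,x_j,x_k\}]$ and $G[\overline{X}\setminus\{y_i,y'_i,y''_i,y_j,y_k\}]$; existence of the former comes from the matching-coveredness of $G_1$, and of the latter from the brace property of $G_2$ (or inductive decomposition via Lemma~\ref{non-brace}) combined with Lemma~\ref{brace}.

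The extremal classification is obtained by tracing equality through the induction. For (i), $\Upsilon(G)=4$ forces every 2-cut reduction to introduce no new nice vertex, which pins the reduction down to an edge splicing with a member of $\mathcal{H}^\diamond$, recovering $\{K_4\}\cup\bigcup_{i=1}^{6}\mathcal{F}_i$. For (ii), $\Upsilon(G)=6$ forces every tight 3-cut reduction to splice $K_{3,3}^\triangle$ at one of its two non-nice vertices with a graph from $\widehat{\mathcal{H}}$; since $K_{3,3}^\triangle$ has only two non-nice vertices, iteration terminates after at most two splicings, yielding $\{\overline{C_6},K_{3,3}^\triangle\}\cup\mathcal{G}_1\cup\mathcal{G}_2$. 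The converse direction is checked by verifying that each splicing operation in the recipes preserves the exact nice vertex count, using Lemma~\ref{G/X-nice} together with the fact that no vertex of the attached bipartite piece becomes nice after splicing. The principal obstacle is the bipartite-$G_2$ sub-case in part~(ii), where one must build perfect matchings by hand and coordinate them across the tight cut; this becomes intricate when $|X|>3$. A secondary difficulty is the reverse implication of the extremal classification, which requires a careful enumeration of splicing recipes compatible with equality and the exclusion of any other configuration.
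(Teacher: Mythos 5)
Your overall architecture (induction on $|V(G)|$, tight-cut and 2-cut contractions, lifting nice vertices via Lemmas \ref{G/X-nice} and \ref{p-2-cut}(iii)) matches the paper's, and your treatment of the 2-cut case of part (i) is essentially the paper's. But the ``delicate case'' of part (ii) contains a genuine gap, and it is precisely the error that the appendix of this paper was written to correct. To show that a cut-endpoint $y_i\in\overline{X}$ is nice you are forced to use both remaining cut edges (parity of $|X|$ rules out using exactly one, and using none would require $y_i$ to be nice in the bipartite graph $G_2$, which is impossible), and hence you need a perfect matching of $G[X\setminus\{x_1,x_2,x_3\}]=G_1-N_{G_1}[\overline{k}]$. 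You assert this ``comes from the matching-coveredness of $G_1$'', but matching-coveredness only yields perfect matchings of $G_1$ minus the two ends of an edge; the existence of a perfect matching of $G_1-N_{G_1}[\overline{k}]$ is exactly the statement that the contraction vertex $\overline{k}$ is nice in $G_1$, and this can fail: Fig.~\ref{counterexample} exhibits a 3-connected non-bipartite cubic $G$ for which $\overline{k}$ is not nice in $G_1$, so for that graph none of your vertices $y_i$ admits the matching you describe. The paper's Lemma \ref{p-3-edge-nice} repairs this by a two-case induction: when $\overline{k}$ is nice in $G_1$ the barrier vertices are shown nice roughly as you propose; when it is not, one recurses inside $G_1$ to produce a different minimal nontrivial barrier $S_1\subseteq V(K)$ all of whose vertices are nice, and then checks that $S_1$ is still a minimal nontrivial barrier of $G$. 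Without this case split the bound $\Upsilon(G)\geq 6$ is not established.

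Two further points. First, your matching of $G[\overline{X}\setminus\{y_i,y_i',y_i'',y_j,y_k\}]$ deletes three vertices from each colour class of $G_2$, which Lemma \ref{brace} (two per class) does not cover, and $G_2$ need not be a brace in the first place; the paper avoids both problems by taking the barrier $S$ \emph{minimal}, contracting every nontrivial component of $G-S$ so that the resulting bipartite graph $G_2'$ is provably a brace (Claim 1 in the proof of Lemma \ref{p-3-edge-nice}), and then invoking Lemma \ref{brace} with only a $2{+}2$ deletion. Second, your extremal classifications are only sketched: the sufficiency directions require the explicit barrier/isolated-vertex certificates of Theorem \ref{criteria} to show the remaining vertices are \emph{not} nice, and the necessity direction of (i) requires the induction on the number of 2-cuts carried out in Lemma \ref{4-nice}. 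Those parts are fixable with routine work, but the Case-2 gap above is an essential missing idea.
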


In order to prove Theorem \ref{main-1}, we first prove several auxiliary results.
A nontrivial barrier $S$ of a graph with a perfect matching is \emph{minimal} if no proper subset of $S$ is also a nontrivial barrier.

\begin{lem}\label{p-3-edge-nice}
Let $G$ be a 3-connected non-bicritical non-bipartite cubic graph.
Then $G$ has a minimal nontrivial barrier $S$ so that each vertex of $S$ is nice.
\end{lem}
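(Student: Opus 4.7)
The plan is to take $S$ as a minimum-cardinality nontrivial barrier of $G$; such $S$ exists by Lemma~\ref{Tutte-cor}(i) applied to the matching covered (Lemma~\ref{matching-covered}) non-bicritical $G$, and being of minimum size makes $S$ automatically minimal. I would first establish two preliminary structural facts. (i) $|S|\geq 3$: if instead $|S|=2$, the 3-connectedness of $G$ keeps $G-S$ connected on $|V(G)|-2$ (an even number of) vertices, contradicting $o(G-S)=|S|=2$. (ii) Each $s\in S$ has its three neighbors in three distinct components of $G-S$: a $(2,1)$ split or $(3,0)$ concentration of $N_G(s)$ would, after re-inserting $s$ into $G-S$, still leave $|S|-1$ odd components in $G-(S\setminus\{s\})$ by a direct parity recount, hence $S\setminus\{s\}$ would still be a (necessarily nontrivial, since $|S|\ge 3$) barrier of $G$, contradicting the minimality of $S$.

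I would then prove that every $s\in S$ is nice by induction on $|V(G)|$, the base case being the smallest 3-connected non-bicritical non-bipartite cubic graph $K_{3,3}^\triangle$, for which one checks directly that $\{b_1,b_2,b_3\}$ is a minimal nontrivial barrier with all three vertices nice. For the inductive step, Lemma~\ref{p-3}(iii) provides a nontrivial non-bipartite component $K$ of $G-S$; by Lemma~\ref{p-3}(ii), the contraction $G_K:=G/\overline{V(K)}$ is a 3-connected simple cubic non-bipartite graph with strictly fewer vertices than $G$.

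If $G_K$ is bicritical, Lemma~\ref{bicritical} makes the contracted vertex $\bar x$ nice in $G_K$, so $K$ minus its three boundary vertices admits a perfect matching. Together with the fact that $K_j-y$ has a perfect matching for every component $K_j$ of $G-S$ and every boundary vertex $y$ of $K_j$ (this follows from the tightness of $\partial(V(K_j))$ from Lemma~\ref{p-3}(i) combined with matching coveredness of $G$), one assembles a perfect matching of $G-N_G[s]$ for any $s\in S$ adjacent to $K$: use a ``double-external'' match through $K$ matching its two non-$N_G(s)$ boundary vertices to the two corresponding members of $S\setminus\{s\}$, the internal perfect matchings of $K_{\alpha(i)}-x_i$ for $s$'s other two neighboring components, and single-external matches through each remaining component. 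If $G_K$ is non-bicritical, the inductive hypothesis yields a minimal nontrivial barrier $S_K$ of $G_K$ whose vertices are all nice in $G_K$; in the easy sub-case $\bar x\notin S_K$ we have $S_K\subseteq V(K)$, and by the tight-cut correspondence combined with Lemma~\ref{G/X-nice}, $S_K$ lifts to a minimal nontrivial barrier of $G$ with all vertices nice.

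The main obstacle is twofold. First, the bicritical case gives niceness only for the $s\in S$ adjacent to $K$ and does not immediately produce the desired minimal nontrivial barrier; bridging this gap requires either showing that the three $S$-vertices incident to $\partial(V(K))$ themselves form a minimal nontrivial barrier of $G$ (which would force $|S|=3$ and identify them with $S$), or else varying $K$ across the non-bipartite components of $G-S$ to witness every $s\in S$ and then assembling a single barrier. Second, the sub-case $\bar x\in S_K$ in the non-bicritical case calls for a delicate swap of $\bar x$ with an appropriate vertex of $\overline{V(K)}$, established via a symmetric analysis on the companion contraction $G/V(K)$; the swap must preserve both the niceness (by the version of Lemma~\ref{G/X-nice} applied from the other side) and the barrier property after re-inserting the replacement into $S_K\setminus\{\bar x\}$.
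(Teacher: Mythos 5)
Your overall skeleton (induction on $|V(G)|$, contract the non-bipartite component $K$ of $G-S$, split into two cases) matches the paper's, but the proposal has a genuine gap at its center, and the two ``obstacles'' you flag are real and not resolved by the fixes you sketch. The crux you are missing is the paper's Claim 1: after contracting the components of $G-S$, the resulting cubic bipartite incidence graph between $S$ and the components is a \emph{brace} (this is exactly where minimality of $S$ is used, via Lemma \ref{tight-cut-bipartite} and Theorem \ref{brick-brace}). Your assembly step ``single-external matches through each remaining component'' silently asserts a perfect matching between $S\setminus\{u_1,u_2,u_3\}$ and the components other than $K_1,K_2,K_3$; this is a Hall-type condition on a $3{+}3$ vertex deletion from that bipartite graph and does not follow from $3$-connectivity or from each component being matchable minus one boundary vertex. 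The paper gets it by invoking Lemma \ref{brace} on the brace $G_2'$ with only a $2{+}2$ deletion and observing that the resulting matching is forced to contain the edge $u_1k$, which is then discarded. Without Claim 1 (or some substitute), the construction of a perfect matching of $G-N_G[s]$ does not go through.

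Your two acknowledged obstacles are also genuine failures rather than loose ends. First, the direct construction as you set it up only certifies the (at most three) vertices of $S$ incident to $\partial(V(K))$; the set $\{u_1,u_2,u_3\}$ need not be a barrier of $G$ at all, and varying $K$ over non-bipartite components cannot reach a vertex of $S$ all of whose neighboring components are bipartite. The paper instead proves niceness of \emph{every} $w\in S$ in its Case 1, treating $w\in\{u_1,u_2,u_3\}$ and $w\notin\{u_1,u_2,u_3\}$ separately, both via the brace property of $G_2'$ together with a perfect matching of $K-\{v_1,v_2,v_3\}$. Second, your case split on bicriticality of $G_K$ manufactures the problematic sub-case $\bar x\in S_K$: the right split is on whether $\bar k$ is nice in $G_1$. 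If $\bar k$ is nice, the direct (brace-based) construction shows the original $S$ works; if $\bar k$ is not nice, then since every vertex of the inductively obtained barrier $S_1$ is nice in $G_1$, automatically $\bar k\notin S_1$, so $S_1\subseteq V(K)$ lifts to a minimal nontrivial barrier of $G$ with all vertices nice by Lemmas \ref{p-3}(i) and \ref{G/X-nice} — no swap is needed. Note finally that the appendix records a counterexample to ``every vertex of a minimum nontrivial barrier is nice,'' so any proof must, as the paper's Case 2 does, be prepared to output a barrier different from the one it started with.
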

\begin{proof}
We proceed by induction on $|V(G)|$.
By Lemma \ref{matching-covered}, $G$ is matching covered.
Since $G$ is non-bicritical, $G$ has a nontrivial barrier by Lemma \ref{Tutte-cor}(i).
Take a minimal nontrivial barrier $S$. Since $G$ is 3-connected, $|S|\geq 3$.
By Lemma \ref{p-3}(iii), $G-S$ has a nontrivial non-bipartite component $K$, which is odd by Lemma \ref{Tutte-cor}(ii).
So $|V(G)|\geq 8$.
If $|V(G)|=8$, then $|S|=3$ and $K$ is a triangle. So $G\cong K_{3,3}^\triangle$ and each vertex of $S$ is nice.
Now assume that $|V(G)|\geq 10$.
Let $G'$ be the graph obtained from $G$ by shrinking each nontrivial component of $G-S$ other than $K$ into a single vertex.

\vspace{8pt}\noindent
{\textbf{Claim 1.} $G_2':=G'/(V(K)\rightarrow k)$ is a brace.}

\begin{proof}
By Lemma \ref{Tutte-cor}(iii), $S$ is independent. So $G_2'$ is bipartite.
By Lemma \ref{p-3}(ii), $G_2'$ is a 3-connected simple cubic graph and so is matching covered.
If $G_2'$ is not a brace, then $G_2'$ has a nontrivial tight cut, say $\partial_{G_2'}{(Y)}$, by Lemma \ref{brick-brace}.
Since $|Y|$ is odd by Lemma \ref{tight-cut-bipartite}, either $Y_+\subseteq S$ or $\overline{Y}_+\subseteq S$, where $\overline{Y}=V(G_2')\setminus Y$. Adjust notation so that $Y_+\subseteq S$.
Then $|Y_+|=|Y\cap S|=|Y_-|+1=|Y\setminus S|+1$ by Lemma \ref{tight-cut-bipartite} again.
Since $\partial_{G_2'}{(Y)}$ is a 3-cut by Lemma \ref{3-cut} and $G_2'$ is also 3-edge-connected, $G_2'[\overline{Y}]$ is connected.
Then $o(G_2'-(S\cap Y))=|Y\setminus S|+1=|S\cap Y|$, which implies $o(G-(S\cap Y))=|S\cap Y|$.
So $S\cap Y$ is a nontrivial barrier of $G$, a contradiction to the minimality of $S$. Hence $G_2'$ is a brace.
\end{proof}

By Lemma \ref{p-3}(ii), $G_1:=G/(\overline{V(K)}\rightarrow \overline{k})$ is a 3-connected non-bipartite simple cubic graph.
Next we consider two cases according to whether $\overline k$ is nice in $G_1$ or not.

{\bf{Case 1.}} $\overline{k}$ is nice in $G_1$.

\begin{figure}
\centering
\includegraphics{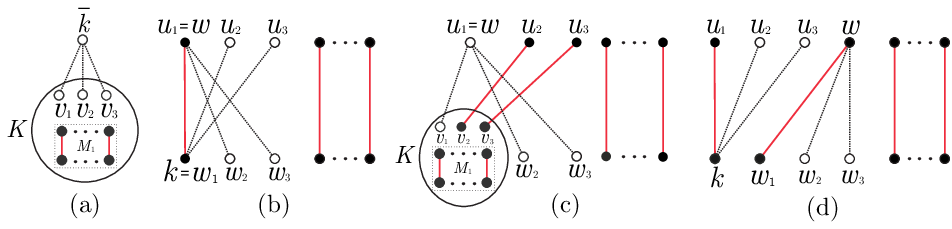}
\caption{\label{pf-3-edge} Illustration for Case 1 in the proof of Lemma \ref{p-3-edge-nice}.}
\end{figure}
We will show that $S$ is a desired barrier. By using Lemmas \ref{p-3}(i), (ii) and \ref{G/X-nice} repeatedly, we only need to show that each vertex of $S$ is nice in $G'$.
By Lemma \ref{p-3}(i), $\partial{(V(K))}$ is a matching of size 3. Set $\partial{(V(K))}=\{u_iv_i|u_i\in S,v_i\in V(K)~\text{and}~i=1,2,3\}$.
Let $w$ be a vertex in $S$ and $N_{G'}(w)=\{w_1,w_2,w_3\}$.
By Claim 1 and Lemma \ref{brace}, $G_2'-\{u_2,u_3,w_2,w_3\}$ has a perfect matching $M_2$.
Since $\overline{k}$ is nice in $G_1$, $G_1-N_{G_1}[\overline{k}]$ has a perfect matching $M_1$ (see Fig. \ref{pf-3-edge}(a)).
If $w\in \{u_1,u_2,u_3\}$, assume that $w=u_1$ and $w_1=v_1$, then $wk\in M_2$ (see Fig. \ref{pf-3-edge}(b)) and $G'-N_{G'}[w]=(G_1-\{\overline{k},v_1\})\cup(G_2'-\{w,k,w_2,w_3\})\cup \{u_2v_2,u_3v_3\}$.
So $M_1\cup(M_2\setminus \{wk\})\cup \{u_2v_2,u_3v_3\}$ is a perfect matching of $G'-N_{G'}[w]$ (see Fig. \ref{pf-3-edge}(c)).
If $w\notin \{u_1,u_2,u_3\}$, then $\{u_1k,ww_1\}\subseteq M_2$ (see Fig. \ref{pf-3-edge}(d)) and $G'-N_{G'}[w]=(G_1-\overline{k})\cup(G_2'-(\{k\}\cup N_{G'}[w]))\cup \partial{(V(K))}$.
So $M_1\cup(M_2\setminus \{u_1k,ww_1\})\cup \partial{(V(K))}$ is a perfect matching of $G'-N_{G'}[w]$.
Hence $w$ is nice in $G'$. By the arbitrariness of $w$, each vertex of $S$ is nice in $G'$.

{\bf{Case 2.}} $\overline{k}$ is not nice in $G_1$.

Lemma \ref{bicritical} implies that $G_1$ is not bicritical.
So $G_1$ has a minimal nontrivial barrier $S_1$ so that each vertex of $S_1$ is nice in $G_1$ by the induction hypothesis.
Then $\overline{k}\notin S_1$ and $S_1\subseteq V(K)$.
Since $\partial{(V(K))}$ is a tight cut of $G$ by Lemma \ref{p-3}(i), each vertex of $S_1$ is also nice in $G$ by Lemma \ref{G/X-nice}.
Since $G_2:=G/((V(K)\rightarrow k)$ is 3-connected (Lemma \ref{p-3}(ii)), $G_2-k$ is connected.
Then $G_2-k$ is contained in some component of $G-S_1$, which implies $o(G-S_1)=o(G_1-S_1)$. So $S_1$ is also a nontrivial barrier of $G$.
If $S_1$ properly contains a nontrivial barrier $T$ of $G$, then $o(G-T)=o(G_1-T)$ by the same reason as above.
Hence $T$ is also a barrier of $G_1$, contradicting the minimality of $S_1$.
So $S_1$ is a minimal nontrivial barrier of $G$, which is a desired barrier.
\end{proof}

\begin{lem}\label{nice-number}
Let $G$ be a 2-connected non-bipartite cubic graph. Then $\Upsilon(G)\geq 4$. Moreover, $\Upsilon(G)\geq 6$ if $G$ is 3-connected and $G\neq K_4$.
\end{lem}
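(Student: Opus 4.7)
The plan is to prove both statements simultaneously by strong induction on $|V(G)|$. The base case $|V(G)|=4$ forces $G=K_4$, giving $\Upsilon(K_4)=4$, which settles (i); (ii) is vacuous here. For $|V(G)|\geq 6$ the inductive step splits according to the vertex-connectivity of $G$.

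If $G$ has a 2-cut, only part (i) is needed. Apply Lemma \ref{p-2-cut-min} to select a 2-cut $\partial(X)=\{ab,cd\}$ with $a,c\in X$ such that $H:=G[X]+ac$ is a 2-connected non-bipartite simple cubic graph with $|V(H)|=|X|<|V(G)|$ and $ac\notin E(G)$. The inductive hypothesis gives $\Upsilon(H)\geq 4$, and since $V(H)=X$ every nice vertex of $H$ lies in $X$. Lemma \ref{p-2-cut}(iii) then promotes each of them to a nice vertex of $G$, so $\Upsilon(G)\geq 4$.

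If $G$ is 3-connected and $G\neq K_4$, I aim for part (ii). When $G$ is bicritical, Lemma \ref{bicritical} makes every vertex nice and $|V(G)|$ is an even integer $\geq 6$, so $\Upsilon(G)\geq 6$. When $G$ is not bicritical, Lemma \ref{p-3-edge-nice} supplies a minimal nontrivial barrier $S$ with $|S|\geq 3$ all of whose vertices are nice in $G$, yielding at least 3 nice vertices inside $S$. To find 3 more outside $S$, pick a nontrivial non-bipartite component $K$ of $G-S$ (guaranteed by Lemma \ref{p-3}(iii)) and form $G_1:=G/(\overline{V(K)}\to\overline{k})$. By Lemma \ref{p-3}(i)-(ii), $\partial(V(K))$ is a nontrivial tight 3-cut and $G_1$ is a 3-connected simple cubic graph; since $K\subseteq G_1$ is non-bipartite, so is $G_1$, and $|V(G_1)|=|V(K)|+1<|V(G)|$ because $|S|\geq 3$. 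Apply the inductive hypothesis to $G_1$: if $G_1=K_4$ then $\Upsilon(G_1)=4$ and the three vertices of $V(K)$ are all nice in $G_1$; if $G_1\neq K_4$ then part (ii) gives $\Upsilon(G_1)\geq 6$, and since at most one of its nice vertices can be the contracted vertex $\overline{k}$, at least 5 of them lie in $V(K)$. In either subcase at least 3 vertices of $V(K)$ are nice in $G_1$, and Lemma \ref{G/X-nice} transfers them to nice vertices of $G$. Since $V(K)\cap S=\emptyset$, we obtain $\Upsilon(G)\geq 3+3=6$.

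The main delicacy is making the two inductive statements cohabit: part (i) on a 2-cut graph invokes part (i) on the smaller $G[X]+ac$, while part (ii) on a non-bicritical $G$ invokes the inductive hypothesis on the smaller $G_1$, with the strict vertex drop forced by $|\overline{X}|\geq 4$ or $|S|\geq 3$, respectively. The count $3+3=6$ is tight because the contraction $G/\overline{V(K)}$ can swallow at most one nice vertex into $\overline{k}$, which is precisely why both the boundary value $\Upsilon(K_4)=4$ and the inductive bound $\Upsilon(G_1)\geq 6$ deliver exactly the three nice vertices in $V(K)$ that close the argument.
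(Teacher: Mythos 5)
Your proof is correct and follows essentially the same route as the paper's: induction on $|V(G)|$, splitting into the 2-cut case (via Lemmas \ref{p-2-cut-min} and \ref{p-2-cut}(iii)), the bicritical case (via Lemma \ref{bicritical}), and the 3-connected non-bicritical case, where a minimal nontrivial barrier from Lemma \ref{p-3-edge-nice} contributes $|S|\geq 3$ nice vertices and the contraction $G/\overline{V(K)}$ contributes at least $\Upsilon(G/\overline{V(K)})-1\geq 3$ more via Lemmas \ref{p-3} and \ref{G/X-nice}. Your explicit subcase split on whether $G_1=K_4$ is subsumed by the paper's single inequality $\Upsilon(G)\geq\Upsilon(G/\overline{V(K)})-1+|S|\geq 4+2$, but the counting is identical.
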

\begin{proof}
We use induction on $|V(G)|$. If $|V(G)|=4$, then $G=K_4$ and $\Upsilon(G)=4$.
Suppose that $|V(G)|\geq 6$ and the result holds for graphs with fewer vertices than $|V(G)|$.
If $G$ is bicritical, then $\Upsilon(G)=|V(G)|\geq6$ by Lemma \ref{bicritical} and the result holds.
So suppose that $G$ is not bicritical. First assume that $G$ is 3-connected.
By Lemma \ref{p-3-edge-nice}, $G$ has a minimal nontrivial barrier $S$ so that each vertex of $S$ is nice.
Since $G$ is 3-connected, $|S|\geq 3$.
Further, since $G$ is non-bipartite, $G-S$ contains a nontrivial non-bipartite component $K$ by Lemma \ref{p-3}(iii).
Since $\partial{(V(K))}$ is a tight cut by Lemma \ref{p-3}(i), $\Upsilon(G)\geq \Upsilon(G/\overline{V(K)})-1+|S|\geq \Upsilon(G/\overline{V(K)})+2$ by using Lemma \ref{G/X-nice}.
By Lemma \ref{p-3}(ii), $G/\overline{V(K)}$ is a 3-connected non-bipartite simple cubic graph.
So $\Upsilon(G/\overline{V(K)})\geq 4$ by the induction hypothesis and thus $\Upsilon(G)\geq 6$.

Now suppose that $G$ is not 3-connected. Since a cubic graph is 3-connected if and only if it is 3-edge-connected, $G$ has a 2-cut.
By Lemma \ref{p-2-cut-min}, we can take a 2-cut $\partial{(X)}=\{ab,cd\}$ such that $a,c\in X$ and $G[X]+ac$ is a 2-connected non-bipartite simple cubic graph. 
Then $\Upsilon(G[X]+ac)\geq 4$ by the induction hypothesis.
Further, as $ac\notin E(G)$, $\Upsilon(G)\geq \Upsilon(G[X]+ac)\geq 4$ by using Lemma \ref{p-2-cut}(iii), and so the proof is completed.
\end{proof}

In the following, we show that the lower bounds in Lemma \ref{nice-number} are sharp.

\begin{lem}\label{6-nice}
Let $G$ be a 3-connected non-bipartite cubic graph.
Then $\Upsilon(G)=6$ if and only if $G\in \{\overline{C_6},K_{3,3}^\triangle\}\cup \mathcal{G}_1\cup\mathcal{G}_2$.
\end{lem}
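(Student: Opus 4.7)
The plan is to prove both directions separately. Sufficiency consists of checking that each listed graph has exactly six nice vertices, and necessity extracts a rigid barrier structure from Lemma \ref{p-3-edge-nice} and then recognizes the splicing pattern.

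For sufficiency, $\overline{C_6}$ is a cubic bicritical graph, so $\Upsilon(\overline{C_6})=6$ by Lemma \ref{bicritical}. In $K_{3,3}^\triangle$ the three common neighbors of the two non-triangle vertices form a barrier of size three that isolates those two vertices; by Theorem \ref{criteria} exactly those two are non-nice, while each of the remaining six is certified nice by an explicit matching of the four vertices left after removing its closed neighborhood. For $G\in\mathcal{G}_1\cup\mathcal{G}_2$, the splicing provides a tight $3$-cut (two such cuts in the $\mathcal{G}_2$ case) whose contractions give $K_{3,3}^\triangle$ on one side and a brace $H_i$ on the other; Lemma \ref{G/X-nice} then lifts the six nice vertices of $K_{3,3}^\triangle$ other than the contracted one into $G$. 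Capping $\Upsilon(G)$ at six requires showing that no vertex of a brace piece $H_i-v_i$ and no leftover non-nice vertex of $K_{3,3}^\triangle$ is nice: the latter falls to the barrier $S=\{b_1,b_2,b_3\}$, and the former uses the barrier pair $A_i$ (isolating every $B_i$-vertex) and $B_i\cup S$ (isolating every $A_i$-vertex), where $(A_i,B_i)$ is the bipartition of the bipartite component $C_i$ with $|A_i|=|B_i|+1$. These are barriers thanks to the fact, from the proof of Lemma \ref{p-3}(iii), that all three edges of $\partial(V(C_i))$ leave $A_i$ and that $E(B_i,S)=\emptyset$.

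For necessity, let $\Upsilon(G)=6$. If $G$ is bicritical, then $\Upsilon(G)=|V(G)|=6$ by Lemma \ref{bicritical}, and $\overline{C_6}$ is the unique 3-connected non-bipartite cubic graph on six vertices. Otherwise Lemma \ref{p-3-edge-nice} supplies a minimal nontrivial barrier $S$ whose vertices are all nice; picking a nontrivial non-bipartite component $K$ of $G-S$ (Lemma \ref{p-3}(iii)) and setting $G_1=G/\overline{V(K)}$, Lemma \ref{G/X-nice} gives
\[\Upsilon(G)\ \ge\ \Upsilon(G_1)-1+|S|.\]
Since $G_1$ is a 3-connected non-bipartite simple cubic graph (Lemma \ref{p-3}(ii)), Lemma \ref{nice-number} yields $\Upsilon(G_1)\ge 4$, which together with $|S|\ge 3$ forces $|S|=3$ and $\Upsilon(G_1)=4$; the equality case of Lemma \ref{nice-number} then gives $G_1=K_4$, so $K$ is a triangle. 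A second nontrivial non-bipartite component of $G-S$ would add at least three further nice vertices by the same argument, contradicting $\Upsilon(G)=6$; hence the remaining two odd components $C_1,C_2$ of $G-S$ (no even components by Lemma \ref{Tutte-cor}(ii)) are bipartite, possibly trivial.

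If $C_1$ and $C_2$ are both singletons, then $|V(G)|=8$ and the edge structure is completely determined, yielding $G=K_{3,3}^\triangle$. Otherwise, for each nontrivial $C_i$ the contraction $G/\overline{V(C_i)}$ is a 3-connected simple cubic graph by Lemma \ref{p-3}(ii), and it is bipartite because the three endpoints of $\partial(V(C_i))$ all lie in $A_i$, so it is a brace $H_i\in\widehat{\mathcal{H}}$. Contracting both $V(C_1)$ and $V(C_2)$ (a trivial $C_i$ counting as no contraction) produces an eight-vertex cubic graph consisting of the triangle $K$, the three-vertex barrier $S$ matched to $K$, and two further vertices each adjacent to all of $S$; this is $K_{3,3}^\triangle$, with the contracted or singleton vertices $c_1,c_2$ occupying its two non-nice slots. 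Reading the contractions back as iterated splicing places $G$ into $\mathcal{G}_1$ when exactly one $C_i$ is nontrivial and into $\mathcal{G}_2$ when both are. The hardest step I anticipate is the sufficiency upper bound on brace pieces: Lemma \ref{G/X-nice} supplies only $\Upsilon(G)\ge 6$, so the explicit barrier pair $(A_i,B_i\cup S)$ is essential to rule out nice vertices in $H_i-v_i$, and its verification knits together Lemma \ref{p-3}(i), the parity/edge-count from the proof of Lemma \ref{p-3}(iii), and Theorem \ref{criteria}.
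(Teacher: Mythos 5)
Your proof is correct and follows essentially the same route as the paper's: barriers plus Theorem \ref{criteria} and Lemma \ref{G/X-nice} for sufficiency, and Lemma \ref{p-3-edge-nice} together with the inequality $\Upsilon(G)\ge\Upsilon(G/\overline{V(K)})-1+|S|$ to force $|S|=3$, $K$ a triangle, and then the splicing structure for necessity. One minor terminological slip: the pieces $H_i$ need only be 3-connected cubic bipartite graphs (members of $\widehat{\mathcal{H}}$), not necessarily braces, but your argument never actually uses brace-ness, so nothing breaks.
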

\begin{proof}
Sufficiency. If $G\in\{\overline{C_6},K_{3,3}^\triangle\}$, then $\Upsilon(G)=6$ clearly.
Assume that $G=(K_{3,3}^\triangle\odot H)_{u,v}\in \mathcal{G}_1$, where $H\in \mathcal{\widehat{H}}$.
Let $H=H(A_H,B_H)$ and $v\in B_H$.
Then $A_H$ (resp. $B_H\cup N_{K_{3,3}^\triangle}(u)$) is a barrier of $G$ such that each vertex of $G$ in $B_H\setminus\{v\}$ (resp. $A_H\cup \{u'\}$) is an isolated vertex, where $u'$ ($\neq u$) is the other non-nice vertex of $K_{3,3}^\triangle$.
By Theorem \ref{criteria}, each vertex of $G$ in $(V(H)\setminus\{v\})\cup\{u'\}$ is not nice.
Since $\partial{(\overline{V(H-v)})}$ is a tight cut of $G$ and $G/V(H-v)\cong K_{3,3}^\triangle$, the 6 nice vertices of $K_{3,3}^\triangle$ are also nice in $G$ by Lemma \ref{G/X-nice}.
So $\Upsilon(G)=6$. Similarly, we can prove that $\Upsilon(G)=6$ if $G\in \mathcal{G}_2$.
Hence the sufficiency holds.

Necessity. If $G$ is bicritical, then $|V(G)|=6$ by Lemma \ref{bicritical} and so $G=\overline{C_6}$.
Next suppose that $G$ is not bicritical.
By Lemma \ref{p-3-edge-nice}, $G$ has a minimal nontrivial barrier $S$ so that each its vertex is nice.
Lemma \ref{p-3}(iii) implies that $G-S$ contains a nontrivial non-bipartite component $K$.
Then $G/\overline{V(K)}$ is a 3-connected non-bipartite simple cubic graph by Lemma \ref{p-3}(ii).
Combining Lemmas \ref{p-3}(i) and \ref{G/X-nice}, $\Upsilon(G)\geq\Upsilon(G/\overline{V(K)})-1+|S|\geq\Upsilon(G/\overline{V(K)})+2$, where the second inequality is because $G$ is 3-connected.
As $\Upsilon(G)=6$, $\Upsilon(G/\overline{V(K)})\leq4$.
By Lemma \ref{nice-number}, $G/\overline{V(K)}=K_4$, which implies that $K$ is a triangle.
Since all vertices of $K$ are nice in $G/\overline{V(K)}$, $G$ has exactly 3 nice vertices in $K$ by Lemma \ref{G/X-nice}.
So $|S|=3$ and $G$ has no nice vertices in $G-S-V(K)$.
By Lemmas \ref{matching-covered} and \ref{Tutte-cor}(ii), $G-S$ has no even components.
As $o(G-S)=|S|=3$, $G-S$ has precisely 3 components.
We claim that the other nontrivial components of $G-S$ is bipartite.
Otherwise, say $K'$ is another nontrivial non-bipartite component of $G-S$.
Then we have that $K'$ contains at least 3 nice vertices of $G$ by Lemmas \ref{p-3}(i), (ii), \ref{nice-number} and \ref{G/X-nice}, a contradiction.
If $K$ is the unique nontrivial component of $G-S$, then $G\cong K_{3,3}^\triangle$.
Assume that $G-S$ has exactly two nontrivial components $K$ and $K^*$.
Set $G_1=G/(V(K^*)\rightarrow k^*)$ and $G_2=G/(\overline{V(K^*)}\rightarrow \overline{k^*})$.
Then we have that $G_1\cong K_{3,3}^\triangle$, $G_2\in \mathcal{\widehat{H}}$ by Lemma \ref{p-3}(ii) and $k^*$ is not nice in $G_1$.
So $G=(G_1\odot G_2)_{k^*,\overline{k^*}}\in \mathcal{G}_1$.
Similarly, $G\in \mathcal{G}_2$ if $G-S$ consists of three nontrivial components. Hence $G\in \{\overline{C_6},K_{3,3}^\triangle\}\cup \mathcal{G}_1\cup\mathcal{G}_2$.
\end{proof}

\begin{lem}\label{4-nice}
Let $G$ be a 2-connected non-bipartite cubic graph. Then $\Upsilon(G)=4$ if and only if $G\in \{K_4\}\cup (\cup_{i=1}^6\mathcal{F}_i)$.
\end{lem}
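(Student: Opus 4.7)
The plan is to prove sufficiency by direct verification and necessity by induction on $|V(G)|$.

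For sufficiency, $G = K_4$ is immediate since $\Upsilon(K_4) = 4$. For $G \in \mathcal{F}_i$, I will argue that the four $K_4$-skeleton vertices are exactly the nice ones. Their niceness follows by iterated application of Lemma \ref{p-2-cut}(iii): writing $G$ as a smaller graph $G' \in \{K_4\} \cup \bigcup_{j < i} \mathcal{F}_j$ with one further edge $ac$ replaced by a chain in $\mathcal{H^\diamond}$, an inductive hypothesis on smaller $\mathcal{F}$-graphs makes the four $K_4$-vertices nice in $G'$, and Lemma \ref{p-2-cut}(iii) transfers this to $G$. Non-niceness of chain-interior vertices is established via Theorem \ref{criteria}: for any interior vertex $u$ of a bipartite chain $C$ with color classes $A_C, B_C$, I would exhibit a barrier of $G$ (one of $A_C, B_C$, augmented, when needed, by the $K_4$-attachment vertex on the side of $u$'s closed neighborhood) in which $u$ is isolated; the parity count works because Lemma \ref{p-2-cut}(iv) places the chain's two boundary vertices $b, d$ in opposite color classes.

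For necessity I plan to induct on $|V(G)|$, with the base case $|V(G)| = 4$ forcing $G = K_4$. For $|V(G)| \geq 6$, Lemma \ref{nice-number} shows a 3-connected $G \neq K_4$ has $\Upsilon(G) \geq 6$, so $G$ must have a 2-cut. By Lemma \ref{p-2-cut-min} I take a 2-cut $\partial(X) = \{ab, cd\}$ such that $G_1 := G[X] + ac$ is a 2-connected non-bipartite simple cubic graph of minimum order. Lemma \ref{p-2-cut}(iii) gives $\Upsilon(G) \geq \Upsilon(G_1) \geq 4$, so $\Upsilon(G_1) = 4$ and by induction $G_1 \in \{K_4\} \cup \bigcup_{j=1}^{6} \mathcal{F}_j$; in particular, no vertex of $\overline X$ is nice in $G$.

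The structural heart of the proof is to show that $G$ arises from $G_1$ by replacing the edge $ac$ with some $H \in \mathcal{H^\diamond}$. First I would prove $G[\overline X]$ is bipartite: if not and $bd \notin E(G)$, the graph $G[\overline X] + bd$ is a 2-connected non-bipartite simple cubic graph (2-connectedness follows from $G$'s 2-connectedness and the 2-cut structure), so the inductive hypothesis combined with the symmetric use of Lemma \ref{p-2-cut}(iii) would give $\geq 4$ nice vertices in $\overline X$, contradicting $\Upsilon(G) = 4$. The case $bd \in E(G)$ is reduced to the same contradiction by passing to the refined 2-cut $\partial(\overline X \setminus \{b, d\}) = \{be, df\}$ (where $e \neq f$ is forced by 2-edge-connectivity, since $e = f$ would yield a 1-cut $\{eg\}$), possibly iterating this reduction along a bipartite chain in $\overline X$ until the non-bipartite part triggers the contradiction. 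Once $G[\overline X]$ is bipartite, Lemma \ref{p-2-cut}(iv) applied with the roles of $X, \overline X$ swapped places $b, d$ in opposite color classes, so the graph $H$ on $\overline X \cup \{a, c\}$ with edges $E(G[\overline X]) \cup \{ab, cd, ac\}$ is bipartite and cubic except at $a, c$ (both of degree $2$), with $ac$ as its unique 22-edge. A peeling argument (at each step the two other neighbors of the current 22-edge are either adjacent, permitting another $L_1$-peel, or their addition to the remaining graph yields a cubic bipartite $H_0 \in \mathcal{H}$) shows $H \in \mathcal{H^\diamond}$. Finally, the minimality of $|X|$ in Lemma \ref{p-2-cut-min} prevents $ac$ from lying inside a chain of $G_1$ (otherwise the chain's own 2-cut would give a strictly smaller non-bipartite 2-connected simple cubic quotient), so $ac$ is an unreplaced $K_4$-edge of $G_1$, placing $G$ in $\mathcal{F}_{j+1}$ when $G_1 \in \mathcal{F}_j$ with $j \leq 5$, or in $\mathcal{F}_1$ when $G_1 = K_4$. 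The main obstacle is the bipartite structural analysis of $G[\overline X]$, especially the iterative handling of the $bd \in E(G)$ case.
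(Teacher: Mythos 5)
Your proof is correct and follows essentially the same route as the paper's: the same minimal 2-cut decomposition via Lemma \ref{p-2-cut-min}, the same transfer of niceness via Lemma \ref{p-2-cut}(iii) and non-niceness via barriers and Theorem \ref{criteria}, the same counting argument forcing the core of $G[\overline{X}\cup\{a,c\}]+ac$ to be bipartite, and the same minimality argument showing $ac$ must be an unreplaced $K_4$-edge. The only differences are cosmetic: you induct on $|V(G)|$ where the paper inducts on the number of 2-cuts, and you spell out the peeling argument for membership in $\mathcal{H}^\diamond$ (and the case $bd\in E(G)$) that the paper asserts in a single line.
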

\begin{proof}
If $|V(G)|=4$, then $G=K_4$ and the result holds. So assume that $|V(G)|\geq 6$.

Sufficiency. Assume that $G$ is a graph in $\mathcal{F}_i$ for some $1\leq i\leq 6$, which is obtained by replacing $i$ edges $e_1,\ldots,e_i$ of $K_4$ by $i$ graphs $H_1^\diamond,\ldots,H_i^\diamond$ in $\mathcal{H^\diamond}$ respectively.
For $1\leq j\leq i$, let $(X_{j1},X_{j2})$ be the bipartition of $H_j^\diamond$ and $x_{j1}x_{j2}$ be its 22-edge with $x_{jl}\in X_{jl}$ for $l=1,2$, and $e_j=y_{j1}y_{j2}$.
Denote by $z_{jl}$ the vertex obtained by identifying $x_{jl}$ and $y_{jl}$ for $l=1,2$.
Then $(X_{j1}\setminus\{x_{j1}\})\cup\{z_{j1}\}$ is a barrier of $G$ such that vertices of $G$ in $X_{j2}\setminus\{x_{j2}\}$ are isolated vertices, which  are not nice in $G$ by Theorem \ref{criteria}. By symmetry, vertices of $G$ in $X_{j1}\setminus\{x_{j1}\}$ are not nice.
Moreover, by using Lemma \ref{p-2-cut}(iii) $i$ times, we obtain that the 4 vertices of $G$ in $\overline{\cup_{j=1}^iV(H_j^\diamond)\setminus\{x_{j1},x_{j2}\}}$ are nice, and so $\Upsilon(G)=4$.

Necessity. We first prove the following claim.

\vspace{8pt}\noindent
{\textbf{Claim 2.}} $G$ has a 2-cut $\partial{(X)}=\{ab,cd\}$ such that $a,c\in X$ and\\
(i) $G_1:=G[X]+ac$ is a 2-connected non-bipartite simple cubic graph with the minimum number of vertices,\\
(ii) $\Upsilon(G_1)=4$, and either $G_1\cong K_4$ or $G_1$ has a 2-cut,\\
(iii) $G_2:=G[\overline{X}\cup\{a,c\}]+ac\in \mathcal{H^\diamond}$.
\begin{proof}
From Lemma \ref{nice-number}, $G$ has a 2-cut.
By Lemma \ref{p-2-cut-min}, we can take a 2-cut $\partial{(X)}=\{ab,cd\}$ of $G$ such that $a,c\in X$ and Claim 2(i) holds.
Then $ac\notin E(G)$. Using Lemma \ref{p-2-cut}(iii), $\Upsilon(G)\geq \Upsilon (G_1)$.
As $\Upsilon(G)=4$, Lemma \ref{nice-number} implies that $\Upsilon(G_1)=4$, and either $G_1\cong K_4$ or $G_1$ has a 2-cut.
So Claim 2(ii) holds.
It remains to prove Claim 2(iii).
Since $G$ is cubic, $G_2=(L_n\oplus H)_{e,f}$ for some $n\geq1$ and a 2-connected cubic graph $H$, where $e$ is a 22-edge of $L_n$.
If $H$ is non-bipartite, then $\Upsilon(H)\geq 4$ by Lemma \ref{nice-number}, and so $\Upsilon(G)\geq\Upsilon(G_1)+\Upsilon(G_2)\geq 8$ by Lemma \ref{p-2-cut}(iii), a contradiction.
Thus $H$ is bipartite and $G_2\in \mathcal{H}^\diamond$.
\end{proof}

By Claim 2, take a 2-cut $\partial{(X)}=\{ab,cd\}$ of $G$ such that $a,c\in X$ and Claim 2(i)-(iii) holds.
We proceed by induction on the number of 2-cuts of $G$.
If $\partial{(X)}$ is the unique 2-cut of $G$, then either $G_1\cong K_4$ or $G_1$ has a 2-cut by Claim 2(ii).
If $G_1$ has a 2-cut, say $\partial_{G_1}{(Y)}$, then $G$ has another 2-cut $C$, where $C=\partial{(Y)}$ if $ac\notin E(G_1[Y])$, and $C=\partial{(\overline{Y})}$ otherwise, a contradiction.
So $G_1\cong K_4$ and $G$ is obtained by replacing exactly one edge in $G_1$ by the graph $G_2$ in $\mathcal{H}^\diamond$ by Claim 2(iii).
Thus $G\in \mathcal{F}_1$ and the result holds.
Assume that the result holds for graphs with less 2-cuts than $G$.
By Claim 2(ii) again, if $G_1\cong K_4$, then $G\in \mathcal{F}_1$ due to Claim 2(iii), and otherwise $G_1\in \cup_{i=1}^6\mathcal{F}_i$ by induction.
Assume that $G_1\in \mathcal{F}_i$ for some $1\leq i\leq 6$, which is obtained by replacing $i$ edges $f_1,f_2,\ldots,f_i$ of $K_4$ by $i$ graphs $H_1,H_2,\ldots,H_i$ in $\mathcal{H^\diamond}$, respectively.
Let $U_j$ be the set of vertices in $H_j$ with degree 3 and $V_j=V(G_1)\setminus U_j$ for $1\leq j\leq i$.
We claim that $ac\in E(K_4)$. Otherwise, $ac\in \partial_{G_1}{(V_j)}\cup E(G_1[U_j])$ for some $1\leq j\leq i$.
Since $G_1[U_j]$ is bipartite and $G_1$ is non-bipartite, $G_1[V_j]$ ($=G[V_j]$) is non-bipartite by Lemma \ref{bi-union}.
Then $\partial{(V_j)}$ is a 2-cut of $G$ and $G[V_j]+f_j'$ is a 2-connected non-bipartite simple cubic graph with less vertices than $G_1$, a contradiction to Claim 2(i), where $f_j'$ is the edge corresponding to the edge $f_j$ in $K_4$.
So $G_1\in \mathcal{F}_{i}$ for some $1\leq i\leq 5$ and $G\in \mathcal{F}_{i+1}\subseteq \cup_{i=2}^6\mathcal{F}_i$, which completes the proof.
\end{proof}

Consequently, Theorem \ref{main-1} is obtained by Lemmas \ref{nice-number}-\ref{4-nice}.

\section{\normalsize Nice pairs in bipartite cubic graphs}
In this section, we discuss nice pairs of vertices in cubic bipartite  graphs.
Let $\mathcal{T}$ be the set of cubic bipartite graphs defined as follows: (i) $K_{3,3}\in \mathcal{T}$, (ii) if $T\in \mathcal{T}$,
then $(T\oplus (L_n\oplus K_{3,3})_{e_1})_{f,e_2}\in\mathcal{T}$, where $e_1$ and $e_2$ are two 22-edges of $L_n$ and $n\geq1$.
For a cubic bipartite graph $G(A,B)$, denote by $(a,b)$ (resp. $(A',B'))$ a pair of vertices $a\in A$ and $b\in B$ (resp. vertex subsets $A'\subseteq A$ and $B'\subseteq B$). Then our main results are as follows.

\begin{thm}\label{main-2}
Let $G(A,B)$ be a cubic bipartite graph.
Then $G$ has a nice pair set $(A',B')$ with $|A'|\geq 3$ and $|B'|\geq 3$.
Moreover, for each nice pair set $(A',B')$, $|A'|\leq3$ and $|B'|\leq3$ if and only if $G\in \mathcal{T}$.
\end{thm}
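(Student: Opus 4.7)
The plan is to induct on $|V(G)|$. The base case is $G=K_{3,3}$: $(A,B)$ is a nice pair set of sizes $3,3$ and $K_{3,3}\in \mathcal{T}$ by definition. The main tool is a lifting sub-lemma analogous to Lemma \ref{G/X-nice}: if $\partial(X)$ is a nontrivial tight cut of $G$ with $G_1:=G/\overline{X}$ simple and contracted vertex $\overline{x}$, and $(a,b)$ is a nice pair of $G_1$ with $a,b\ne \overline{x}$, then $(a,b)$ is a nice pair of $G$. The proof uses Lemma \ref{tight-cut-bipartite}, which confines the tight-cut edges between $X_+$ and $\overline{X}_+$, so at most the vertex of $\{a,b\}$ lying in $X_+$ can be incident to $\partial(X)$. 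A perfect matching $M_1$ of $G_1-(N_{G_1}[a]\cup N_{G_1}[b])$ then extends to one of $G-(N_G[a]\cup N_G[b])$ by adjoining a perfect matching of $G[\overline{X}\setminus\{a'\}]$ (when $a$ is incident to $\partial(X)$ via edge $aa'$) or of $G[\overline{X}\setminus\{x\}]$ (where $xy$ is the edge of $G$ realising the $M_1$-edge at $\overline{x}$), both furnished by matching-coveredness of $G$ and $G/X$.

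For the existence of a nice pair set with $|A'|,|B'|\geq 3$: if $G$ is a brace, the preceding brace/nice-pair characterization directly yields $(A,B)$. Otherwise, Lemma \ref{non-brace} produces a nontrivial tight cut $\partial(X)$ with $G_1$ a brace. When $|V(G_1)|\geq 8$, $|A_1|=|B_1|\geq 4$, so $A_1$ and $B_1\setminus\{\overline{x}\}$ (or vice versa) each contain $\geq 3$ real vertices, and the sub-lemma lifts to a nice pair set of $G$ of the required sizes. When $G_1\cong K_{3,3}$, only $2$ real vertices remain on $\overline{x}$'s side; we apply the induction hypothesis to the strictly smaller graph $G_2=G/X$ to obtain a nice pair set of $G_2$ with sizes $\geq 3,\geq 3$, and lift via the symmetric form of the sub-lemma, using the complete nice-pair structure of the $K_{3,3}$-brace $G_1$ to replace the contracted vertex $x^*$ (if it appears in the $G_2$-pair set) by a real vertex of $X$ while preserving niceness of all resulting pairs in $G$.

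For the extremal characterization, sufficiency ($G\in \mathcal{T}\Rightarrow$ every nice pair set has sides $\leq 3$) is handled by structural induction on $\mathcal{T}$. Each edge-splicing step $(T\oplus (L_n\oplus K_{3,3})_{e_1})_{f,e_2}$ introduces a $2$-cut whose induced tight cuts, via Lemma \ref{p-2-cut}(iii), force any nice pair set to lie essentially inside either the $T$-side (bounded above by $3$ per side by induction) or the $K_{3,3}$-side (bounded above by $3$ per side by the $K_{3,3}$ structure). For necessity, if every nice pair set of $G$ has sides $\leq 3$, the existence argument rules out $G$ being a brace with $\geq 8$ vertices, so either $G=K_{3,3}$ or $G$ is a non-brace whose Lemma \ref{non-brace}-tight-cut has brace contraction $K_{3,3}$; a combined tight-cut and $2$-cut analysis (via Lemmas \ref{tight-cut-bipartite} and \ref{p-2-cut}) identifies this cut as an edge-splicing of $(L_n\oplus K_{3,3})_{e_1}$ onto a smaller graph $T\in \mathcal{T}$ supplied by induction, placing $G$ in $\mathcal{T}$.

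The main obstacles will be the precise handling of contracted-vertex replacement in the existence argument when $G_1\cong K_{3,3}$, and the structural identification of the $(L_n\oplus K_{3,3})_{e_1}$ pattern in the necessity direction of the characterization; both steps require carefully tracking how nice pairs survive under bipartite tight cuts and the $2$-cuts they induce.
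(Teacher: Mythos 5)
Your outline matches the paper's strategy on the $3$-connected side: your lifting sub-lemma is the paper's Lemma \ref{H/X-nice-pair}, the brace case is Proposition \ref{brace-nice}, and the induction through a tight cut of Lemma \ref{non-brace} with the ``replace the contracted vertex'' step is exactly Lemma \ref{bi-3-edge} combined with Lemma \ref{H/X-nice-pair}(ii). However, there is a genuine gap: you never separate out the case where $G$ is $2$-connected but not $3$-connected, and that is precisely where your machinery breaks. A cubic bipartite graph can have a $2$-cut $\partial(X)=\{uv,wz\}$ (e.g.\ every $G\in\mathcal{T}$ other than $K_{3,3}$), and the associated tight cuts have the form $\partial(X\setminus\{u\})$, two of whose three edges share the endpoint $u$. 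Consequently the non-brace contraction $G/(X\setminus\{u\})$ has a double edge, so it is not a simple cubic bipartite graph and your induction hypothesis cannot be applied to it; and, more seriously, the cut is not a matching, so the cross-lifting you invoke to ``replace the contracted vertex $x^*$ by a real vertex of $X$'' fails. Indeed, by Lemma \ref{bi-2-cut}(i) no pair $(a,b)$ with $a\in\overline{X}$ and $b\in X$ is nice across a $2$-cut, so the pairs your replacement step would produce are simply not nice in $G$; this is why the paper's Lemma \ref{H/X-nice-pair}(ii) carries the explicit hypothesis that $G$ is $3$-connected (forcing the tight cut to be a matching of size $3$ via Lemma \ref{cut-matching}).

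The paper closes this hole with Lemma \ref{2-cut-min}: choose a $2$-cut $\partial(X)$ with $|X|$ minimum, so that $G[X]+uw$ is a $3$-connected cubic bipartite graph; apply the $3$-connected result (Lemma \ref{bi-3-edge}) to $G[X]+uw$ to get a nice pair set with both sides of size at least $3$, and transfer it back to $G$ with Lemma \ref{bi-2-cut}(ii) --- all pairs stay on one side of the cut, consistent with Lemma \ref{bi-2-cut}(i). You would need to add this reduction (or an equivalent one) before your tight-cut induction; a related slip is that in the sufficiency direction of the characterization you cite Lemma \ref{p-2-cut}(iii), which concerns nice vertices in non-bipartite graphs, where the relevant tool is Lemma \ref{bi-2-cut}. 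With the $2$-cut reduction in place, the remainder of your plan is essentially the paper's proof.
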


\begin{cor}\label{cor}
Let $G$ be a cubic bipartite graph.
Then $G$ has at least 9 nice pairs of vertices, with equality if and only if $G=K_{3,3}$.
\end{cor}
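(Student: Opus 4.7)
The plan is to derive the lower bound directly from Theorem~\ref{main-2} and handle the equality case by way of its structural characterization. For the lower bound, I take the nice pair set $(A', B')$ supplied by Theorem~\ref{main-2} with $|A'|, |B'| \geq 3$; by the definition of a nice pair set, every pair in $A' \times B'$ is a nice pair, so $G$ has at least $|A'| \cdot |B'| \geq 9$ nice pairs. For the easy direction of the equality characterization, if $G = K_{3,3}$ then for every $a \in A$ and $b \in B$ one has $N_G[a] \cup N_G[b] = V(G)$, so $G - N_G[a] - N_G[b]$ is the empty graph and vacuously has a perfect matching; hence $K_{3,3}$ achieves exactly nine nice pairs.

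For the converse, suppose $G$ has exactly $9$ nice pairs. Then the lower-bound argument forces $|A'| = |B'| = 3$ and $A' \times B'$ to be the complete set of nice pairs of $G$. I first argue $G \in \mathcal{T}$: otherwise Theorem~\ref{main-2} produces a nice pair set $(A'', B'')$ with $|A''| > 3$ or $|B''| > 3$; taking WLOG $|A''| \geq 4$ and any $b \in B''$, the $|A''|$ nice pairs $\{(a, b) : a \in A''\}$ all lie in $A' \times B'$, forcing $A'' \subseteq A'$ and contradicting $|A''| > |A'|$.

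It then remains to rule out $G \in \mathcal{T} \setminus \{K_{3,3}\}$, which I would do by induction on the recursive construction of $\mathcal{T}$. Writing $G = (T \oplus (L_n \oplus K_{3,3})_{e_1})_{f, e_2}$ with $T \in \mathcal{T}$, the chain $L_n$ induces a nontrivial $2$-cut of $G$ separating the newly attached copy $K$ of $K_{3,3}$ from the remainder. Using Lemma~\ref{p-2-cut}(ii) to extend perfect matchings across this cut together with the observation that in $K_{3,3}$ every cross pair covers all six vertices, I would verify that each of the $9$ within-$K$ pairs is nice in $G$, and by the inductive hypothesis applied to $T$ that the $9$ nice pairs inside the outermost $K_{3,3}$-block of $T$ likewise persist as nice pairs in $G$. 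Since these two $3 \times 3$ families occupy disjoint blocks, $G$ would have at least $18$ nice pairs, contradicting the equality assumption.

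The principal obstacle is precisely this last inductive step: one must carefully check, via the tight $2$-cut structure, both that within-block nice pairs survive under splicing and that no cross-block pair $(a, b)$ with $a$ and $b$ in different $K_{3,3}$-blocks can be nice---the key point being that removing $N_G[a] \cup N_G[b]$ strands an endpoint of the $2$-cut as a low-degree vertex in one of the two residual components, preventing any perfect matching there.
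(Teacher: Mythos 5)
Your proposal is correct, but it reaches the equality case by a genuinely different route. The paper splits on 3-connectivity: for 3-connected $G$ it applies Lemma \ref{bi-3-edge} directly (a $3\times 5$ nice pair set when $|V(G)|\geq 10$, and otherwise $G\in\{K_{3,3},H_{4,4}\}$ with $9$ and $16$ nice pairs respectively), and for $G$ with a $2$-cut it uses Lemma \ref{2-cut-min}(i) together with Lemma \ref{bi-2-cut}(ii) to locate two disjoint blocks each carrying at least $9$ nice pairs, hence at least $18$ in total. You instead invoke the ``moreover'' clause of Theorem \ref{main-2}: exactly $9$ nice pairs forces every nice pair set to be at most $3\times 3$, hence $G\in\mathcal{T}$, and you then unwind the recursive definition of $\mathcal{T}$ to exhibit two disjoint $K_{3,3}$-blocks contributing $9$ nice pairs each. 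Both routes bottom out in the same $2$-cut transfer argument; for your last step the right tool is Lemma \ref{bi-2-cut}(ii) rather than Lemma \ref{p-2-cut}(ii) --- it states that a pair inside $X$ is nice in $G$ if and only if it is nice in $G[X]+uw$, which makes the survival of all nine within-block pairs immediate (every pair of $K_{3,3}$ is nice) and renders your concern about cross-block pairs unnecessary, since the contradiction already follows from the $18$ within-block pairs. One point to make explicit: when you negate ``every nice pair set is at most $3\times 3$'' you need the witnessing set $(A'',B'')$ to have both parts nonempty before choosing $b\in B''$; this is legitimate because the sets produced in the proof of Lemma \ref{best} (via Lemmas \ref{bi-3-edge} and \ref{2-cut-min}) have both parts of size at least $3$. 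Your approach buys a clean black-box use of Theorem \ref{main-2} at the cost of relying on its harder ``moreover'' half (Lemma \ref{best}); the paper's version is shorter because it reuses only the constructive lemmas behind the lower bound.
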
}

Before proving Theorem \ref{main-2} and Corollary \ref{cor}, we show some useful results.
Since the edge chromatic number of a bipartite graph is equal to its maximum degree (see \cite{BM}, Theorem 17.2), every cubic bipartite graph has 3 edge-disjoint perfect matchings. This implies the following result.

\begin{lem}\label{bi-matching-covered}
Every cubic bipartite graph is matching covered.
\end{lem}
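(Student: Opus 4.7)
The plan is to use the hint provided just before the statement, namely König's edge-coloring theorem for bipartite graphs (the edge chromatic number of a bipartite graph equals its maximum degree, cited as Theorem 17.2 of \cite{BM}). For a cubic bipartite graph $G$, this immediately yields a proper edge coloring with exactly $3$ colors, which partitions $E(G)$ into three color classes $M_1, M_2, M_3$, each a matching.

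Next I would argue that each $M_i$ is in fact a perfect matching. Since every vertex $v\in V(G)$ has degree $3$ and the three edges incident to $v$ must receive distinct colors, each color class contains exactly one edge incident to $v$. Hence every $M_i$ saturates all vertices, i.e., is perfect. Because $M_1\cup M_2\cup M_3=E(G)$, every edge of $G$ lies in at least one perfect matching. Combined with the standing assumption that graphs in the paper are connected (and $G$ has at least two vertices, since $G$ is cubic), this is exactly the definition of matching covered.

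There is no genuine obstacle; the only subtlety worth flagging explicitly is the quick observation that a proper $3$-edge-coloring of a cubic graph automatically gives \emph{three perfect} matchings (not merely three matchings), which is what converts the edge-coloring conclusion into the matching-cover conclusion.
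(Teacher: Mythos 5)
Your proof is correct and follows exactly the paper's own argument: the sentence preceding the lemma invokes K\H{o}nig's edge-coloring theorem to decompose a cubic bipartite graph into three edge-disjoint perfect matchings, which is precisely your route. The one subtlety you flag --- that each color class is automatically a \emph{perfect} matching because every vertex of degree $3$ meets one edge of each color --- is the same (implicit) observation the paper relies on.
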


\begin{prop}\label{brace-nice}
Let $G(A,B)$ be a cubic bipartite graph.
Then $G$ is a brace if and only if each pair $(a,b)$ of $G$ is nice (i.e. $(A,B)$ is a nice pair set).
\end{prop}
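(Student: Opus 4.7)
The plan splits by direction. For necessity (brace implies every pair nice) I would reduce directly to Lemma \ref{brace}; for sufficiency I would argue the contrapositive and extract a non-nice pair from the nontrivial tight cut furnished by Theorem \ref{brick-brace}.

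For necessity, the case $|V(G)|=6$ is immediate since then $G=K_{3,3}$ and $\{a,b\}\cup N(a)\cup N(b)=V(G)$. Assume $|V(G)|\ge 8$. If $ab\in E(G)$, write $N(a)=\{b,b_1',b_2'\}$ and $N(b)=\{a,a_1',a_2'\}$; Lemma \ref{brace} supplies a perfect matching $M$ of $G-\{a_1',a_2',b_1',b_2'\}$, and since $a$'s only available neighbor there is $b$, we have $ab\in M$, so $M\setminus\{ab\}$ is a perfect matching of $G-(N(a)\cup N(b))$. If $ab\notin E(G)$, write $N(a)=\{b_1',b_2',b_3'\}$ and $N(b)=\{a_1',a_2',a_3'\}$, apply Lemma \ref{brace} to the four distinct vertices $a_1',a_2',b_1',b_2'$, and note that the resulting perfect matching must contain both $ab_3'$ and $a_3'b$; deleting these two edges gives a perfect matching of $G-(\{a,b\}\cup N(a)\cup N(b))$.

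For sufficiency, assume $G$ is not a brace. Lemma \ref{bi-matching-covered} makes $G$ matching covered, so by Theorem \ref{brick-brace} it has a nontrivial tight cut $\partial(X)$, which by Lemma \ref{3-cut} satisfies $|\partial(X)|=3$. Lemma \ref{tight-cut-bipartite} lets me assume (after possibly swapping $X$ and $\overline{X}$) that $X_+=X\cap A$, so $|X\cap A|=|X\cap B|+1$ and $E(X\cap B,\overline{X}\cap A)=\emptyset$; in particular, all three edges of $\partial(X)$ run between $X\cap A$ and $\overline{X}\cap B$. A quick simplicity count rules out $|X|=3$ (the unique vertex of $X\cap B$ would need three neighbors in an $X\cap A$ of size two), so $|X|\ge 5$ and consequently $|X\cap A|\ge 3$, $|X\cap B|\ge 2$; the same argument applied to $\overline{X}$ gives $|\overline{X}\cap A|\ge 1$.

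Choose any $a\in\overline{X}\cap A$ and $b\in X\cap B$. By the tight-cut structure $ab\notin E(G)$, $N(a)\subseteq\overline{X}\cap B$, and $N(b)\subseteq X\cap A$. With $S=\{a,b\}\cup N(a)\cup N(b)$, set $Y=(X\cap B)\setminus\{b\}\subseteq B\setminus S$. Every vertex of $Y$ has all its neighbors in $X\cap A$, so $N_{G-S}(Y)\subseteq (X\cap A)\setminus N(b)$, yielding $|N_{G-S}(Y)|\le |X\cap A|-3<|X\cap A|-2=|Y|$. Hall's condition fails for $Y$, hence $G-S$ has no perfect matching and $(a,b)$ is not nice, contradicting the hypothesis. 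The one step that requires care is spotting the Hall-violating set: after removing $S$, the vertices of $Y$ are trapped inside $X\cap A$ by the zero-edge condition in Lemma \ref{tight-cut-bipartite}, and then $N(b)$ alone absorbs enough of $X\cap A$ to force the count.
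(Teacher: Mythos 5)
Your proposal is correct and follows essentially the same route as the paper: necessity via Lemma \ref{brace} applied to two neighbors of $a$ and two of $b$ (with the forced edges then deleted), and sufficiency via the nontrivial tight cut from Theorem \ref{brick-brace} together with the structure in Lemma \ref{tight-cut-bipartite}. Your Hall's-condition phrasing of the final step is the same count the paper performs when it observes that a perfect matching of $G-N_G[a]-N_G[b]$ would be forced to use an edge of the empty set $E(B\cap X,A\cap\overline{X})$.
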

\begin{proof}
Necessity. Set $N_G(a)=\{a_1,a_2,a_3\}$ and $N_G(b)=\{b_1,b_2,b_3\}$.
By Lemma \ref{brace}, $G-\{a_2,a_3,b_2,b_3\}$ has a perfect matching, denoted by $M$.
If $a\notin N_G(b)$, then $\{aa_1,bb_1\}\subseteq M$ since $G$ is cubic.
It follows that $M\setminus \{aa_1,bb_1\}$ is a perfect matching of $G-N_G[a]-N_G[b]$.
If $a\in N_G(b)$, assume that $a=b_1$ and $b=a_1$, then $ab\in M$ and so $M\setminus \{ab\}$ is a perfect matching of $G-N_G[a]-N_G[b]$.
Thus $(a,b)$ is a nice pair and the necessity holds.

Sufficiency. Suppose to the contrary that $G$ is not a brace.
Since $G$ is matching covered (Lemma \ref{bi-matching-covered}) and bipartite, $G$ has a nontrivial tight cut by Lemma \ref{brick-brace}. Take a nontrivial tight cut  $\partial{(X)}$ so that $X_+\subseteq A$.
By Lemma \ref{tight-cut-bipartite}, $|A\cap X|=|B\cap X|+1$ and $E(B\cap X,A\cap \overline{X})=\emptyset$.
For $a\in A\cap \overline{X}$ and $b\in B\cap X$, since $(a,b)$ is nice in $G$, $G-N_G[a]-N_G[b]$ has a perfect matching, say $F$.
As $|(A\cap X)\setminus N_G(b)|=|A\cap X|-3=|B\cap X|-2<|(B\cap X)\setminus \{b\}|$, $F\cap E(B\cap X,A\cap \overline{X})\neq \emptyset$, a contradiction. So $G$ is a brace.
\end{proof}

\begin{lem}\label{H/X-nice-pair}
Let $G(A,B)$ be a cubic bipartite graph.
Assume that $\partial{(X)}$ is a nontrivial tight cut of $G$ such that $|A\cap X|=|B\cap X|+1$ and
$G/\overline{X}$ is simple. For $a\in A$ and $b\in B$,\\
{\rm (i)} if $a,b\in X$, then $(a,b)$ is a nice pair of $G/\overline{X}$ if and only if it is a nice pair of $G$,\\
{\rm (ii)} Assume that $G$ is 3-connected, $a\in X$ and $b\in \overline{X}$.
If $(a,\overline{x})$ and $(x,b)$ are nice pairs of $G/(\overline{X}\rightarrow \overline{x})$ and $G/(X\rightarrow x)$ respectively, then $(a,b)$ is a nice pair of $G$.
\end{lem}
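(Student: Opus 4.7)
The plan is to prove both parts by direct construction of perfect matchings, exploiting Lemma \ref{tight-cut-bipartite}. Since $|A\cap X|=|B\cap X|+1$ forces $E(B\cap X,A\cap\overline{X})=\emptyset$, every edge of $\partial(X)$ runs from $A\cap X$ to $B\cap\overline{X}$; hence the contracted vertex $\overline{x}$ of $G_1$ lies on the $B$-side and the contracted vertex $x$ of $G_2$ on the $A$-side. By Lemma \ref{3-cut}, $|\partial(X)|=3$, and both $G_1=G/\overline{X}$ and $G_2=G/X$ are matching covered by tightness of $\partial(X)$. For part (ii) I also use that, since $G$ is 3-connected, Lemma \ref{cut-matching} forces $\partial(X)$ to be a matching, so $G_2$ is likewise simple.

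For part (i), write $H=G-N_G[a]-N_G[b]$ and $H_1=G_1-N_{G_1}[a]-N_{G_1}[b]$; note $N_G[b]\subseteq X$ since $b\in B\cap X$, and since $G_1$ is simple, $a\in A\cap X$ has at most one neighbor $a'$ in $\overline{X}$ (Case A if $a'$ exists; Case B otherwise). For the forward direction from a perfect matching $M_1$ of $H_1$: in Case A, take a perfect matching $M_2$ of $G_2$ containing $xa'$ and output $M_1\cup(M_2\setminus\{xa'\})$; in Case B, $\overline{x}\in V(H_1)$ is matched by $M_1$ to some $v\in A\cap X$ whose unique $\overline{X}$-neighbor in $G$ is $v'$, and picking a perfect matching $M_2$ of $G_2$ containing $xv'$ yields $(M_1\setminus\{\overline{x}v\})\cup\{vv'\}\cup(M_2\setminus\{xv'\})$ as a perfect matching of $H$. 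For the backward direction, given a perfect matching $N$ of $H$, each vertex of $V(H)\cap B\cap X$ must be matched inside $X$ (no $\overline{X}$-option), so a parity count gives $|N\cap\partial(X)|=|N_G(a)\cap X|-2$, which is $0$ in Case A and $1$ in Case B; thus $N|_X$ is already a perfect matching of $H_1$ in Case A, while in Case B replacing the unique crossing edge $vv'\in N$ by $v\overline{x}$ produces one.

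For part (ii), set $P=N_{G_1}(\overline{x})\subseteq A\cap X$ and $Q=N_{G_2}(x)\subseteq B\cap\overline{X}$; since $\partial(X)$ is a matching of size $3$, $|P|=|Q|=3$ and $\partial(X)$ is a bijection $P\leftrightarrow Q$. Given perfect matchings $M_1$ of $G_1-N_{G_1}[a]-N_{G_1}[\overline{x}]$ and $M_2$ of $G_2-N_{G_2}[x]-N_{G_2}[b]$, I define $M:=M_1\cup M_2\cup F$, where $F$ is the set of $\partial(X)$-edges surviving in $H:=G-N_G[a]-N_G[b]$ (those incident to neither $a$ nor $b$). The verification reduces to the two identities $V(H)\cap X=V(M_1)\cup(P\setminus(\{a\}\cup N_G(b)))$ and $V(H)\cap\overline{X}=V(M_2)\cup(Q\setminus(\{b\}\cup N_G(a)))$ (both unions disjoint), together with the observation that the bijection $P\leftrightarrow Q$ induced by $\partial(X)$ restricts to a bijection between the two extra sets, so $F$ supplies exactly the missing edges of $M$.

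The main obstacle is the bookkeeping in part (ii): one must verify the identities case by case according to whether $a\in P$, whether $b\in Q$, and (when both hold) whether $ab\in E(G)$. In each case an edge $pq\in\partial(X)$ survives in $F$ if and only if $p\neq a$ and $q\neq b$, from which the counts $|F|=|P\setminus(\{a\}\cup N_G(b))|=|Q\setminus(\{b\}\cup N_G(a))|$ line up and the matching structure of $\partial(X)$ delivers the required bijection.
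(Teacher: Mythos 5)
Your proposal is correct and follows essentially the same route as the paper: both directions of (i) and the construction in (ii) are obtained by transferring perfect matchings across the tight cut $\partial(X)$, using that the $\partial(X)$-contractions are matching covered and that $\partial(X)$ is a 3-edge matching with all edges running from $A\cap X$ to $B\cap\overline{X}$. The only cosmetic difference is that in (ii) you package the paper's explicit four-case analysis (according to whether $a\in N_{G_1}(\overline{x})$, $b\in N_{G_2}(x)$, and $ab\in E(G)$) into a single formula $M_1\cup M_2\cup F$ with two set identities, which checks out.
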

\begin{proof}
Set $G_1=G/(\overline{X}\rightarrow \overline{x})$ and $G_2=G/(X\rightarrow x)$.
For an edge $e\in \partial{(X)}$, denote by $e_i$ the edge in $G_i$ corresponding to $e$ for $i=1,2$.

(i)\ Necessity. Since $(a,b)$ is a nice pair of $G_1$, $G_1-N_{G_1}[a]-N_{G_1}[b]$ has a perfect matching, say $M_1$.
As $G_1$ is simple and $b\neq \overline{x}$, $(M_1\cup\partial_{G_1}{(a)})\cap \partial_{G_1}{(\overline{x})}$ consists of exactly one edge, say $e_1$.
Since $G$ is matching covered by Lemma \ref{bi-matching-covered}, $G_2$ is also matching covered.
So the graph obtained from $G_2$ by removing the two end-vertices of $e_2$ has a perfect matching, denoted by $M_2$.
If $e_1\in M_1$, then $G-N_G[a]-N_G[b]=(G_1-N_{G_1}[a]-N_{G_1}[b]-\{\overline{x}\})\cup (G_2-x)$, and so $(M_1\setminus \{e_1\})\cup M_2\cup \{e\}$ is a perfect matching of $G-N_G[a]-N_G[b]$. 
If $e_1\notin M_1$, then $M_1\cup M_2$ is a perfect matching of $G-N_G[a]-N_G[b]$.
Thus $(a,b)$ is a nice pair of $G$.

Sufficiency. Let $M$ be a perfect matching of $G-N_G[a]-N_G[b]$. Then $(M\cup \partial{(a)})\cap \partial{(X)}$ consists of precisely one edge, say $e'$.
If $e'\in M$, then $(M\cap E(G_1))\cup \{e_1'\}$ is a perfect matching of $G_1-N_{G_1}[a]-N_{G_1}[b]$.
If $e'\notin M$, then $M\cap E(G_1)$ is a perfect matching of $G_1-N_{G_1}[a]-N_{G_1}[b]$.
So $(a,b)$ is a nice pair of $G_1$.
\begin{figure}
\centering
\includegraphics{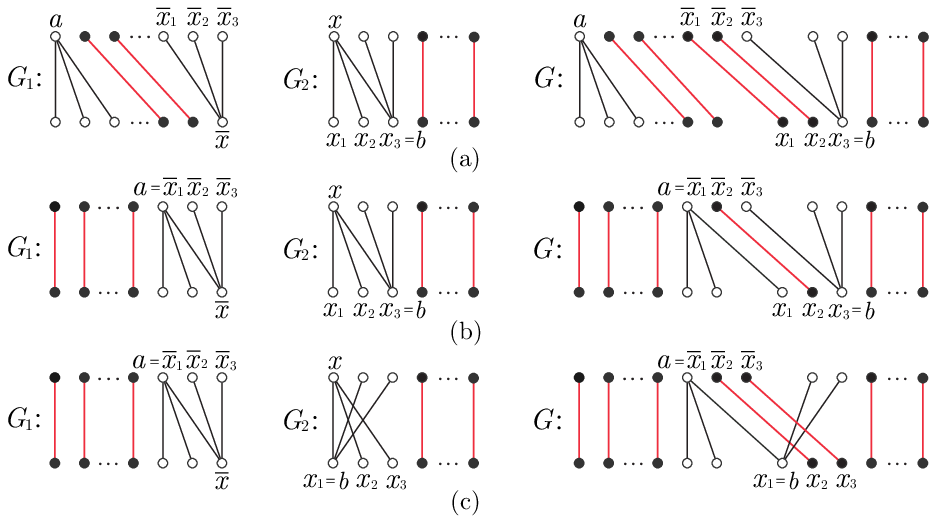}
\caption{\label{bi-nice} Illustration for the proof of Lemma \ref{H/X-nice-pair}(ii).}
\end{figure}

(ii)\ By Lemmas \ref{cut-matching} and \ref{3-cut}, $\partial{(X)}$ is a matching of size 3.
Assume that $\partial{(X)}=\{\overline{x}_ix_i|\overline{x}_i\in A,x_i\in B~\text{and}~i=1,2,3\}$.
Then $\overline{x}_i\overline{x}\in E(G_1)$ and $xx_i\in E(G_2)$ for $i=1,2,3$.
Let $F_1$ and $F_2$ be perfect matchings of $G_1-N_{G_1}[a]-N_{G_1}[\overline{x}]$ and $G_2-N_{G_2}[x]-N_{G_2}[b]$, respectively.
If $a\notin N_{G_1}(\overline{x})$ and $b\notin N_{G_2}(x)$, then obviously $F_1\cup F_2\cup \partial{(X)}$ is a perfect matching of $G-N_{G}[a]-N_{G}[b]$.
If $a\notin N_{G_1}(\overline{x})$ and $b\in N_{G_2}(x)$, assume that $b=x_3$, then $G-N_{G}[a]-N_{G}[b]=(G_1-N_{G_1}[a]-\{\overline{x}_3,\overline{x}\})\cup(G_2-N_{G_2}[b])$, and so $F_1\cup F_2\cup \{\overline{x}_1x_1,\overline{x}_2x_2\}$ is a perfect matching of $G-N_{G}[a]-N_{G}[b]$ (see Fig. \ref{bi-nice}(a)).
If $a\in N_{G_1}(\overline{x})$ and $b\notin N_{G_2}(x)$, then $G-N_{G}[a]-N_{G}[b]$ also has a perfect matching by symmetry.
Now assume that $a\in N_{G_1}(\overline{x})$ and $b\in N_{G_2}(x)$.
If $ab\notin E(G)$, say $a=\overline{x}_1$ and $b=x_3$, then $G-N_{G}[a]-N_{G}[b]=(G_1-N_{G_1}[a]-\overline{x}_3)\cup (G_2-N_{G_2}[b]-x_1)$, and so $F_1\cup F_2\cup \{\overline{x}_2x_2\}$ is a perfect matching of $G-N_{G}[a]-N_{G}[b]$ (see Fig. \ref{bi-nice}(b)).
Similarly, if $ab\in E(G)$, say $a=\overline{x}_1$ and $b=x_1$, then $F_1\cup F_2\cup \{\overline{x}_2x_2,\overline{x}_3x_3\}$ is a perfect matching of $G-N_{G}[a]-N_{G}[b]$ (see Fig. \ref{bi-nice}(c)). Therefore, $(a,b)$ is a nice pair of $G$.
\end{proof}

Note that $K_{3,3}$ and $H_{4,4}$ (see Fig. \ref{cubic}(d)) are all 3-connected cubic bipartite graphs on at most 8 vertices (also see \cite{R18}).
\begin{lem}\label{bi-3-edge}
Let $G(A,B)$ be a 3-connected cubic bipartite graph. 
If $|V(G)|\geq 10$, then $G$ has a nice pair set $(A',B')$ with $|A'|\geq 3$ and $|B'|\geq 5$, and otherwise $(A,B)$ is a nice pair set of $G$.
\end{lem}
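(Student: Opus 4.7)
The plan is to induct on $|V(G)|$. The base case $|V(G)|\le 8$ is immediate: the remark preceding the lemma identifies $G$ as $K_{3,3}$ or $H_{4,4}$, both of which are braces (easily checked via Lemma~\ref{brace}), so Proposition~\ref{brace-nice} gives that $(A,B)$ is a nice pair set.

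Now suppose $|V(G)|\ge 10$. If $G$ itself is a brace, then Proposition~\ref{brace-nice} yields $(A,B)$ as a nice pair set with both sides of size $\ge 5$. Otherwise, Lemma~\ref{non-brace} provides a nontrivial tight cut $\partial(X)$ one of whose contractions is a brace. Label so that $G_2:=G/X$ is the brace side and (swapping $X\leftrightarrow\overline{X}$ and/or $A\leftrightarrow B$ as needed, which is harmless for the purpose of the conclusion) $X_+=A\cap X$. Applying Lemmas~\ref{3-cut}, \ref{cut-matching}, and \ref{G/X} shows that $\partial(X)$ is a matching of size $3$ and that both $G_1:=G/\overline{X}$ and $G_2$ are $3$-connected cubic bipartite simple graphs, with $\overline{x}\in B(G_1)$ and $x\in A(G_2)$ by Lemma~\ref{tight-cut-bipartite}. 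Moreover $|B\cap\overline{X}|\ge 3$ since $G_2$ is a brace on $\ge 6$ vertices, and $(x,b)$ is nice in $G_2$ for each $b\in B\cap\overline{X}$ by Proposition~\ref{brace-nice}.

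I then split on $|V(G_1)|$. If $|V(G_1)|\le 8$, then $G_1\in\{K_{3,3},H_{4,4}\}$ is also a brace, so $(a,\overline{x})$ is nice in $G_1$ for every $a\in A\cap X$ and every pair in $(A\cap X)\times(B\cap X)$ is nice in $G_1$; Lemma~\ref{H/X-nice-pair}(i) lifts the in-$X$ pairs and Lemma~\ref{H/X-nice-pair}(ii) lifts the cross pairs to nice pairs of $G$, so $A'=A\cap X$ and $B'=B$ works, with $|A'|=|V(G_1)|/2\ge 3$ and $|B'|\ge 5$. If $|V(G_1)|\ge 10$, apply the inductive hypothesis to $G_1$ to obtain a nice pair set $(A_1',B_1')$ with $|A_1'|\ge 3$ and $|B_1'|\ge 5$, and split further on whether $\overline{x}\in B_1'$. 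If $\overline{x}\in B_1'$, take $A'=A_1'$ and $B'=(B_1'\setminus\{\overline{x}\})\cup(B\cap\overline{X})$: Lemma~\ref{H/X-nice-pair}(i) handles pairs with $b\in B_1'\setminus\{\overline{x}\}\subseteq B\cap X$, while Lemma~\ref{H/X-nice-pair}(ii) handles pairs with $b\in B\cap\overline{X}$, using that $(a,\overline{x})$ is nice in $G_1$ for $a\in A_1'$ and $(x,b)$ is nice in $G_2$; this gives $|B'|\ge(|B_1'|-1)+|B\cap\overline{X}|\ge 7$. If $\overline{x}\notin B_1'$, then $B_1'\subseteq B\cap X$ and $A_1'\subseteq A\cap X$ both sit inside $X$, so Lemma~\ref{H/X-nice-pair}(i) alone lifts $(A_1',B_1')$ to a nice pair set of $G$ with the required sizes.

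The most delicate point is the color-class and side bookkeeping: arranging $X_+=A\cap X$ with the brace placed on the $G_2$ side is what makes Lemma~\ref{H/X-nice-pair}(ii) apply in the direction needed to absorb $B\cap\overline{X}$ into $B'$. The subcase $\overline{x}\notin B_1'$ might look the most awkward at first glance, but turns out to be the easiest, since the inductive nice pair set already lies entirely inside $X$ and transfers directly via part (i) without interacting with the cut or invoking the brace $G_2$.
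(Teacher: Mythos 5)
Your overall strategy is the same as the paper's: find a nontrivial tight cut one of whose contractions is a brace (Lemma~\ref{non-brace}), use Lemmas~\ref{3-cut}, \ref{cut-matching} and \ref{G/X} to see that both contractions are $3$-connected simple cubic bipartite graphs, lift nice pairs through the cut with Lemma~\ref{H/X-nice-pair}, and recurse on the non-brace contraction. The only structural difference is that you induct on $|V(G)|$ while the paper inducts on the number of nontrivial tight cuts; your choice is harmless since a nontrivial tight cut has $|X|,|\overline{X}|\geq 3$, so each contraction has strictly fewer vertices. Your individual lifting steps and size counts are all correct.

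The gap is the parenthetical claim that swapping $A\leftrightarrow B$ ``is harmless for the purpose of the conclusion.'' It is not, because the conclusion is not symmetric in $A$ and $B$: it asserts $|A'|\geq 3$ and $|B'|\geq 5$ for the \emph{given} labelling. In your construction the class that ends up with at least $5$ vertices is forced by the cut, not chosen freely: in every case ($A'=A\cap X=X_+$ with $B'=B$; or $A'=A_1'\subseteq A\cap X$ with $B'$ assembled from $B_1'$ and $B\cap\overline{X}$) the large side is the colour class meeting $X$ in the minority part $X_-$. If the original labelling happens to have $X_+\subseteq B$, your normalization silently renames the classes, and what you actually produce is a nice pair set with $|A'|\geq 5$ and $|B'|\geq 3$ in the original labelling --- the reversed inequalities. (This does establish the symmetric weakening ``both sides $\geq 3$ and one side $\geq 5$,'' which is all that Lemma~\ref{2-cut-min}, Lemma~\ref{best} and Corollary~\ref{cor} ever use; but it is not the stated lemma, and the asymmetric strong form is also exactly what your own inductive hypothesis consumes when you place $A_1'$ inside $A\cap X$.) The paper avoids this by never relabelling: it keeps the original $(A,B)$ and instead chooses \emph{which contraction's} $A$-class to use, pairing it with all of $B$ when both contractions are braces (``$(A_i,B)$ \ldots for some $i=1,2$'') and splitting into three cases according to whether the contraction vertex lies in $A_2'$, in $B_2'$, or in neither. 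Your argument is repairable in the same way --- when $X_+\subseteq B$, take $A'$ from the brace side, namely $A\cap\overline{X}=\overline{X}_+$ (or $(A_1'\setminus\{\overline{x}\})\cup(A\cap\overline{X})$ in the inductive case), and pair it with $B\cap X$ together with the inherited set --- but as written that orientation is not covered.
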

\begin{proof}
If $|V(G)|<10$, then $G\in \{K_{3,3},H_{4,4}\}$ and each pair of vertices is nice.
So $(A,B)$ is a nice pair set and the latter statement holds.
Next assume that $|V(G)|\geq10$.
We proceed by induction on the number of nontrivial tight cuts of $G$. 
If $G$ has no nontrivial tight cuts, since $G$ is matching covered (Lemma \ref{bi-matching-covered}) and bipartite, $G$ is a brace by Lemma \ref{brick-brace}.
Since $|A|=|B|\geq 5$, the result follows from Proposition \ref{brace-nice}.
Assume that $G$ has a nontrivial tight cut and the result holds for graphs with fewer nontrivial tight cuts than $G$.
Lemma \ref{brick-brace} implies that $G$ is not a brace.
By Lemma \ref{non-brace}, $G$ has a nontrivial tight cut $\partial{(X)}$ so that one of the two $\partial{(X)}$-contractions is a brace.
Adjust notation so that $G_1:=G/(\overline{X}\rightarrow \overline{x})$ is a brace.
By Lemmas \ref{G/X}-\ref{3-cut}, $G_2:=G/(X\rightarrow x)$ is a 3-connected simple cubic bipartite graph.
Set $G_i=G_i(A_i,B_i)$, $i=1,2$.
By Proposition \ref{brace-nice}, $(A_1,B_1)$ is a nice pair set of $G_1$ with $|A_1|=|B_1|\geq 3$.
If $G_2$ is a brace, then $(A_2,B_2)$ is also a nice pair set of $G_2$ with $|A_2|=|B_2|\geq 3$.
Using Lemma \ref{H/X-nice-pair}, $(A_i,B)$ is a nice pair set of $G$ with $|A_i|\geq 3$ and $|B|\geq 5$ for some $i=1,2$.
If $G_2$ is not a brace, then $|V(G_2)|\geq 10$. 
By the induction hypothesis, $G_2$ has a nice pair set $(A_2',B_2')$ with $|A_2'|\geq 3$ and $|B_2'|\geq 5$.
If $x\in A_2'$, then $\overline{x}\in B_1$ and $(A_1,B_2')$ is a nice pair set of $G$ by Lemma \ref{H/X-nice-pair}(ii).
If $x\in B_2'$, then $\overline{x}\in A_1$ and $(A_2',B_1\cup (B_2'\setminus \{x\}))$ is a nice pair set of $G$ with $|A_2'|\geq 3$ and $|B_1\cup (B_2'\setminus \{x\})|\geq 3+4=7$ by Lemma \ref{H/X-nice-pair}.
If $x\notin A_2'$ and $x\notin B_2'$, then $(A_2',B_2')$ is a nice pair set of $G$ by Lemma \ref{H/X-nice-pair}(i), and so the lemma holds.
\end{proof}

\begin{lem}\label{bi-2-cut}
Assume that $\partial{(X)}=\{uv,wz\}$ is a 2-cut of a cubic bipartite graph $G(A,B)$ such that $u,w\in X$ and $uw\notin E(G)$.
Then the following two statements hold.\\
{\rm (i)} If $(a,b)$ is a nice pair of $G$, then either $\{a,b\}\subseteq X$ or $\{a,b\}\subseteq \overline{X}$,\\
{\rm (ii)} If $\{a,b\}\subseteq X$, then $(a,b)$ is a nice pair of $G$ if and only if it is nice in $G[X]+uw$.
\end{lem}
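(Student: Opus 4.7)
The plan begins with a short bookkeeping step. Since $G$ is cubic and bipartite with $\partial(X)=\{uv,wz\}$, double-counting edges within $G[X]$ forces $|E(A\cap X,\overline{X})|=|E(B\cap X,\overline{X})|=1$ and hence $|A\cap X|=|B\cap X|$ (and analogously for $\overline{X}$); in particular the cut edges' endpoints in $X$ lie in different color classes, so I may assume $u\in A\cap X$, $w\in B\cap X$, $v\in B\cap\overline{X}$, $z\in A\cap\overline{X}$.

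For (i), I argue by contradiction. Assume $(a,b)$ is nice with $a\in A\cap X$ and $b\in B\cap\overline{X}$ (the case $a\in\overline{X}$, $b\in X$ is symmetric under swapping $X$ and $\overline{X}$). Let $M$ be a perfect matching of $G-N_G[a]-N_G[b]$, and set $\alpha=|A\cap X\setminus(N_G[a]\cup N_G[b])|$, $\beta=|B\cap X\setminus(N_G[a]\cup N_G[b])|$. Each edge of $M$ internal to $X$ matches one vertex of each color, while the only possible crossings $uv$ and $wz$ contribute one extra surviving $A\cap X$-vertex (namely $u$) and one extra $B\cap X$-vertex (namely $w$) respectively, whence $\alpha-\beta=[uv\in M]-[wz\in M]\in\{-1,0,1\}$. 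A case analysis on whether $a=u$ and whether $b=v$, combined with $|A\cap X|=|B\cap X|$, computes $\alpha-\beta=2$ when $a\neq u$ and $b\neq v$, and $\alpha-\beta=1$ in each of the other three sub-cases; in those three sub-cases, however, one of $u,v$ is removed, so $uv\notin M$ and $[uv\in M]-[wz\in M]\le 0$. Every case produces a contradiction, completing (i).

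For (ii), the key observation is that Lemma \ref{p-2-cut}(i) (applied once with $u$ and once with $w$) makes both $\partial(X\setminus\{u\})$ and $\partial(X\setminus\{w\})$ tight cuts of $G$. A direct inspection of neighborhoods shows $G/(\overline{X}\cup\{u\})\cong G[X]+uw\cong G/(\overline{X}\cup\{w\})$ via the natural identification of each contracted super-vertex with $u$ or $w$, respectively; both contractions are simple because $uw\notin E(G)$. If $a\neq u$, then both $a,b\in X\setminus\{u\}$ (since $b\in B$ and $u\in A$ force $b\neq u$ automatically), and Lemma \ref{H/X-nice-pair}(i) applied to $\partial(X\setminus\{u\})$, after swapping the bipartition labels to accommodate the parity condition $|B\cap(X\setminus\{u\})|=|A\cap(X\setminus\{u\})|+1$, yields the equivalence that $(a,b)$ is nice in $G$ if and only if it is nice in $G[X]+uw$. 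Symmetrically, if $b\neq w$, apply the lemma to $\partial(X\setminus\{w\})$ directly (no swap needed). Together these two applications cover every pair $a\in A\cap X$, $b\in B\cap X$ except $(a,b)=(u,w)$.

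Finally I handle the remaining case $(a,b)=(u,w)$ by a direct computation. Since $uw\notin E(G)$ and $N_G(u)\subseteq B$, $N_G(w)\subseteq A$ are disjoint, $N_G[u]\cup N_G[w]=\{u,v,w,z,u_1,u_2,w_1,w_2\}$ consists of eight distinct vertices, where $u_1,u_2\in B\cap X$ and $w_1,w_2\in A\cap X$ are the remaining neighbors. Deleting this set from $G$ also removes both cut edges, so $G-N_G[u]-N_G[w]$ decomposes as the disjoint union $G[X\setminus\{u,w,u_1,u_2,w_1,w_2\}]\cup G[\overline{X}\setminus\{v,z\}]$. By Lemma \ref{p-2-cut}(ii), the second component has a perfect matching, so $(u,w)$ is nice in $G$ if and only if $G[X\setminus\{u,w,u_1,u_2,w_1,w_2\}]$ has a perfect matching. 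The same subgraph coincides with $(G[X]+uw)-N_{G[X]+uw}[u]-N_{G[X]+uw}[w]$, giving an identical criterion for niceness of $(u,w)$ in $G[X]+uw$. The main subtleties are the parity bookkeeping in (i) and the recognition that the corner case $(u,w)$ of (ii) escapes Lemma \ref{H/X-nice-pair}(i) and must be verified by hand.
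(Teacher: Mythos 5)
Your proof is correct and follows essentially the same route as the paper: part (i) is the same parity count of how a perfect matching of $G-N_G[a]-N_G[b]$ can cross the 2-cut (your $\alpha-\beta$ bookkeeping is just a reformulation of the paper's count of $|M\cap E(A_1,\overline{X})|$), and part (ii) uses the same two tight cuts $\partial(X\setminus\{u\})$ and $\partial(X\setminus\{w\})$ with Lemma \ref{H/X-nice-pair}(i), finishing the corner case $(u,w)$ by the same direct decomposition via Lemma \ref{p-2-cut}(ii). Your explicit four-case analysis in (i) and the remark about swapping the colour classes to meet the parity hypothesis of Lemma \ref{H/X-nice-pair}(i) are details the paper leaves implicit, but they do not change the argument.
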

\begin{proof}
Since $G[X]$ ($\subseteq G$) is bipartite, $u$ and $w$ are in different color classes of $G$ by Lemma \ref{p-2-cut}(iv).
Without loss of generality, assume that $u\in A$. Then $z\in A$ and $v,w\in B$.

(i)\ Otherwise, suppose that $a\in X$ and $b\in \overline{X}$.
Since $(a,b)$ is nice in $G$, $G-N_G[a]-N_G[b]$ has a perfect matching, denoted by $M$.
Let $A_1=A\cap X$ and $B_1=B\cap X$. Since $G$ is cubic, $|E(G[X])|=3|A_1|-1=3|B_1|-1$, and so $|A_1|=|B_1|$.
If $a\neq u$, then $|M\cap E(A_1,\overline{X})|=2$ since $|B_1\setminus N_G(a)|=|B_1|-3=|A_1\setminus \{a\}|-2$, which is impossible as $E(A_1,\overline{X})=\{uv\}$.
If $a=u$, then $uv\notin M$ and $|M\cap E(A_1,\overline{X})|=1$, which is also impossible.
So (i) holds.

(ii)\ Set $G_1=G[X]+uw$. Then $G_1$ is isomorphic to $G/(\overline{X}\cup \{u\})$ and $G/(\overline{X}\cup \{w\})$.
By Lemma \ref{p-2-cut}(i), $\partial{(X\setminus \{u\})}$ and $\partial{(X\setminus \{w\})}$ are tight cuts of $G$.
If $a\neq u$ or $b\neq w$, then $(a,b)$ is a nice pair of $G$ if and only if it is nice in $G_1$ by Lemma \ref{H/X-nice-pair}(i).
So we only need to show that the result holds for $a=u$ and $b=w$.
If $(u,w)$ is a nice pair of $G$, then $G-N_G[u]-N_G[w]$ has a perfect matching, say $F$.
It follows that $F\cap E(G_1)$ is a perfect matching of $G_1-N_{G_1}[u]-N_{G_1}[w]$. So $(u,w)$ is nice in $G_1$.
Conversely, if $(u,w)$ is nice in $G_1$, then $G_1-N_{G_1}[u]-N_{G_1}[w]$ has a perfect matching, say $F_1$.
Note that $G-N_G[u]-N_G[w]=(G_1-N_{G_1}[u]-N_{G_1}[w])\cup (G[\overline{X}\setminus\{v,z\}])$.
By Lemma \ref{p-2-cut}(ii), $G[\overline{X}\setminus\{v,z\}]$ has a perfect matching, denoted by $F_2$.
Then $F_1\cup F_2$ is a perfect matching of $G-N_G[u]-N_G[w]$. So $(u,w)$ is nice in $G$, and the proof is completed.
\end{proof}

\begin{lem}\label{2-cut-min}
Let $G(A,B)$ be a cubic bipartite graph with a 2-cut. Then\\
{\rm (i)} $G$ has a 2-cut $\partial{(X)}=\{uv,wz\}$ such that $u,w\in X$, $G[X]+uw\in \mathcal{\widehat{H}}$ and $G[\overline{X}\cup\{u,w\}]+uw\in \mathcal{H}^\diamond$,\\
{\rm (ii)} $G$ has a nice pair set $(A',B')$ with $|A'|\geq 3$ and $|B'|\geq 3$.
\end{lem}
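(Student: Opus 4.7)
The plan is to choose a 2-cut $\partial(X)=\{uv,wz\}$ with $u,w\in X$ that minimizes $|X|$ among all 2-cuts of $G$, and to verify that this extremal choice fulfills both parts, with (ii) following easily from (i).

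For (i), I would first verify that $uw\notin E(G)$: if $uw\in E(G)$, bipartiteness forces the other $X$-neighbors $u_2$ of $u$ and $w_2$ of $w$ to be distinct (otherwise a triangle would appear), so $\partial(X\setminus\{u,w\})=\{uu_2,ww_2\}$ is a strictly smaller 2-cut, contradicting minimality. Hence $G[X]+uw$ is simple; it is automatically cubic and bipartite (by Lemma \ref{p-2-cut}(iv) for the latter). For 3-connectedness, I would show that any 2-edge cut $\partial_{G[X]+uw}(Y)$ pulls back to a strictly smaller 2-cut of $G$: if $u,w$ are on the same side of $Y$, take $\partial_G(X\setminus Y)$ or $\partial_G(Y)$ accordingly; if they are separated, $uw$ is one of the two cut edges and the remaining edge together with $uv$ forms $\partial_G(Y)$. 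In every case $|X|$-minimality is violated, so $G[X]+uw\in\mathcal{\widehat{H}}$.

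The main obstacle is verifying that $G^*:=G[\overline{X}\cup\{u,w\}]+uw\in\mathcal{H^\diamond}$. My plan is to peel the 22-edge iteratively. Starting from $G^*_0:=G^*$, at step $i$ let $\alpha_i\beta_i$ be the current 22-edge and $\alpha_i',\beta_i'$ their respective other neighbors; in case (a), $\alpha_i'\beta_i'\in E$, peel $\{\alpha_i,\beta_i\}$ and proceed with the new 22-edge $\alpha_i'\beta_i'$; in case (b), $\alpha_i'\beta_i'\notin E$, stop and set $H:=G^*_i-\{\alpha_i,\beta_i\}+\alpha_i'\beta_i'$. The crucial point is that each case (a) step extends the 2-cut to $X_i:=X_{i-1}\cup\{\alpha_i,\beta_i\}$ with $\partial(X_i)=\{\alpha_i\alpha_i',\beta_i\beta_i'\}$ again a 2-cut of $G$ (the pair $\alpha_i',\beta_i'$ is distinct by bipartiteness, since $\alpha_i'=\beta_i'$ would yield a triangle), so $G^*_{i+1}\cong G[\overline{X_i}]$ is connected. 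Bipartiteness further forces every intermediate $G^*_i$ to have at least 8 vertices (the smaller configurations violate bipartiteness by a direct count), so the recursion must terminate in case (b). At termination, $H=G[\overline{X_i}]+\alpha_i'\beta_i'$ is connected, cubic, bipartite, and simple; hence $H\in\mathcal{H}$, realizing $G^*\cong(L_n\oplus H)_{e,f}\in\mathcal{H^\diamond}$ with $n$ one more than the number of peeling steps.

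For (ii), once (i) is in hand, set $G_1:=G[X]+uw\in\mathcal{\widehat{H}}$. By Lemma \ref{bi-3-edge}, $G_1$ has a nice pair set $(A_1',B_1')$ with $|A_1'|,|B_1'|\geq 3$ in both subcases (for $|V(G_1)|<10$ we have $G_1\in\{K_{3,3},H_{4,4}\}$ with $(A_1,B_1)$ itself a nice pair set, and for $|V(G_1)|\geq 10$ we get $|A_1'|\geq 3,|B_1'|\geq 5$). Since both coordinates of any such pair lie in $X$, Lemma \ref{bi-2-cut}(ii) immediately transfers each nice pair of $G_1$ to a nice pair of $G$, so $(A_1',B_1')$ is the required nice pair set of $G$.
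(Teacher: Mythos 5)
Your proof is correct and follows essentially the same route as the paper: choose a 2-cut $\partial{(X)}$ with $|X|$ minimum, deduce $uw\notin E(G)$ and the 3-connectedness of $G[X]+uw$ from that minimality, and obtain (ii) by combining Lemmas \ref{bi-3-edge} and \ref{bi-2-cut}(ii). The only difference is that you spell out the peeling argument showing $G[\overline{X}\cup\{u,w\}]+uw\in \mathcal{H^\diamond}$, a structural fact the paper asserts directly from $G$ being cubic.
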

\begin{proof}
Take a 2-cut $\partial{(X)}=\{uv,wz\}$ of $G$ such that $|X|$ is minimum.
Then $uw\notin E(G)$. Otherwise, $\partial{(X\setminus\{u,w\})}$ is also a 2-cut of $G$ as $G$ is cubic with $|X\setminus\{u,w\}|<|X|$, a contradiction.
Set $G_1=G[X]+uw$ and $G_2=G[\overline{X}\cup\{u,w\}]+uw$.
Then $G_1$ is a simple cubic graph and $G_2=(L_n\oplus H)_{e,f}$ for some $n\geq 1$, where $e$ is a 22-edge and $H$ is a cubic graph.
Lemma \ref{p-2-cut}(iv) implies that $G_1$ and $H$ are bipartite.
So $G_2\in \mathcal{H}^\diamond$.
If $G_1$ is not 3-connected, then $G_1$ has a 2-cut $\partial_{G_1}{(Y)}=\{u_1v_1,w_1z_1\}$ such that $u_1,w_1\in Y$ and $u_1w_1\notin E(G_1)$.
But thus $\partial{(Y)}$ is a 2-cut of $G$ if $ac\notin E(G_1[Y])$; $\partial{(\overline{Y})}$ is a 2-cut of $G$ otherwise, a contradiction to the minimality of $|X|$.
So $G_1$ is 3-connected.
 Hence $G_1\in \mathcal{\widehat{H}}$, and (i) holds.
By Lemma \ref{bi-3-edge}, we obtain that $G_1$ has a nice pair set $(A',B')$ with $|A'|\geq 3$ and $|B'|\geq 3$, which is also nice in $G$ by Lemma \ref{bi-2-cut}(ii). So (ii) holds.
\end{proof}

Next we show that the lower bounds on $|A'|$ and $|B'|$ of Theorem \ref{main-2} are best possible.
\begin{lem}\label{best}
Let $G(A,B)$ be a cubic bipartite graph and $(A',B')$ be a nice pair set of $G$. Then $|A'|\leq3$ and $|B'|\leq3$ if and only if $G\in \mathcal{T}$.
\end{lem}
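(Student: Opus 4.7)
The plan is to prove the two implications separately, each by induction.

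For the sufficiency ($G\in\mathcal{T}$ implies every nice pair set of $G$ has $|A'|\le 3$ and $|B'|\le 3$), I would induct on the construction of $\mathcal{T}$. The base case $G=K_{3,3}$ is immediate. For the inductive step, let $G=(T\oplus(L_n\oplus K_{3,3})_{e_1})_{f,e_2}$ with $T\in\mathcal{T}$. The key observation is that $G$ retains many 2-cuts: the one separating $V(T)$ from the rest, the one separating $V(K_{3,3})$ from the rest, and each rung $\{u_iu_{i+1},v_iv_{i+1}\}$ of $L_n$ (since the splicings add no cross edges between columns of $L_n$). Given any nice pair set $(A',B')$ with both parts non-empty, Lemma \ref{bi-2-cut}(i) says each individual pair $(a,b)$ lies on one side of every 2-cut; a short contradiction argument (if $a_1\in A'$ lies in $Y$ and $a_2\in A'$ lies in $\overline{Y}$ for some 2-cut $\partial(Y)$, then any $b\in B'$ conflicts with one of them) upgrades this to: all of $A'\cup B'$ lies on one side of every 2-cut. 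Intersecting these constraints forces $A'\cup B'$ into one of three regions — $V(T)$, $V(K_{3,3})$, or a single column $\{u_i,v_i\}$ of $L_n$. The last case yields $|A'|,|B'|\le 1$; the $K_{3,3}$ case yields $|A'|,|B'|\le 3$ by vertex count; and the $V(T)$ case reduces, via Lemma \ref{bi-2-cut}(ii) applied to the cut $\partial(V(T))$, to a nice pair set of $T$, which satisfies the bound by induction.

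For the necessity, I would induct on $|V(G)|$. If $G$ is 3-connected, Lemma \ref{bi-3-edge} yields either a nice pair set of $G$ with $|B'|\ge 5$ (when $|V(G)|\ge 10$), or $G\in\{K_{3,3},H_{4,4}\}$ with $(A,B)$ itself a nice pair set. The hypothesis rules out the first case and rules out $G=H_{4,4}$ (since $|A|=|B|=4$), forcing $G=K_{3,3}\in\mathcal{T}$. If $G$ has a 2-cut, I would invoke Lemma \ref{2-cut-min}(i) to obtain a 2-cut $\partial(X)=\{uv,wz\}$ with $G_1:=G[X]+uw\in\widehat{\mathcal{H}}$ and $G_2:=G[\overline{X}\cup\{u,w\}]+uw\in\mathcal{H}^\diamond$. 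By Lemma \ref{bi-2-cut}(ii), every nice pair set of $G_1$ lifts to a nice pair set of $G$, so $G_1$ inherits the hypothesis; being 3-connected cubic bipartite, the preceding analysis forces $G_1=K_{3,3}$. Writing $G_2=(L_n\oplus H)_{e_1,f_H}$ for some $H\in\mathcal{H}$, I observe that the unique 22-edge of $G_2$ is precisely $uw$, which is therefore the other 22-edge $e_2$ of $L_n$. Reassembling yields $G=(H\oplus(L_n\oplus K_{3,3})_{e_1})_{f_H,e_2}$. A second application of Lemma \ref{bi-2-cut}(ii), this time to the 2-cut of $G$ separating the $H$-part from the rest, shows that $H$ also inherits the hypothesis. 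Since $|V(H)|<|V(G)|$, the induction hypothesis gives $H\in\mathcal{T}$, and hence $G\in\mathcal{T}$ by the defining operation.

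The main obstacle will be the bookkeeping for the reassembly step in the necessity proof: one must verify that the unique 22-edge of $G_2$ is precisely the added edge $uw$ across the chosen 2-cut, and then reorganize the triple edge-splicing of $K_{3,3}$, $L_n$, and $H$ so that it matches the exact form $(T\oplus(L_n\oplus K_{3,3})_{e_1})_{f,e_2}$ prescribed by the definition of $\mathcal{T}$. A secondary subtlety, arising in the sufficiency direction, is confirming that the rung 2-cuts of $L_n$ persist in $G$ after the splicings with $T$ and $K_{3,3}$; this holds because those operations only identify vertices at the 22-edge endpoints of $L_n$ and cannot create paths between non-adjacent columns.
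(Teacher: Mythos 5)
Your proposal is correct and follows essentially the same route as the paper: both directions induct using the decomposition from Lemma \ref{2-cut-min}(i) together with Lemma \ref{bi-2-cut} to confine $A'\cup B'$ to one side of each 2-cut and to transfer nice pair sets between $G$ and the pieces $K_{3,3}$, $L_n$ and $H$, finishing with the same reassociation of the triple edge-splicing. The only differences are cosmetic (you treat the internal columns of $L_n$ one at a time where the paper argues about the whole region $U$ at once, and you make the exclusion of $H_{4,4}$ explicit).
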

\begin{proof}
Necessity. We use induction on $|V(G)|$.
If $|V(G)|=6$, then $G=K_{3,3}\in \mathcal{T}$, and so the result holds.
Assume that $|V(G)|\geq 8$ and the result holds for graphs with less vertices than $G$.
If $G$ is 3-connected, then $|V(G)|<10$ by Lemma \ref{bi-3-edge}, and so $G=K_{3,3}\in \mathcal{T}$.
Suppose that $G$ is not 3-connected. Then $G$ has a 2-cut. 
By Lemma \ref{2-cut-min}(i), we can take a 2-cut $\partial{(X)}=\{uv,wz\}$ of $G$ such that $u,w\in X$, $G[X]+uw\in \mathcal{\widehat{H}}$ and $G[\overline{X}\cup\{u,w\}]+uw\in \mathcal{H}^\diamond$.
Assume that $G[\overline{X}\cup\{u,w\}]+uw=(L_n\oplus H)_{e,f}$ for some $n\geq 1$, where $H\in\mathcal{H}$ and $e$ is a 22-edge of $L_n$.
By Lemma \ref{bi-2-cut}(ii), each nice pair set $(A_1,B_1)$ (resp. $(A_H,B_H)$) of $G[X]+uw$ (resp. $H$) satisfies $|A_1|\leq3$ and $|B_1|\leq3$ (resp. $|A_H|\leq3$ and $|B_H|\leq3$).
So $G[X]+uw\cong K_{3,3}$ by Lemma \ref{bi-3-edge} and $H\in \mathcal{T}$ by the induction hypothesis.
Since $G\cong(K_{3,3}\oplus(L_n\oplus H)_{e,f})_{e'}=(H\oplus(L_n\oplus K_{3,3})_{e'})_{f,e}$, we have that $G\in \mathcal{T}$, where $e'$ is another 22-edge of $L_n$.
So the necessity holds.

Sufficiency. The result is obvious if $G=K_{3,3}$. So suppose that $G\neq K_{3,3}$. Then $|V(G)|\geq 12$.
By induction on $|V(G)|$. If $|V(G)|=12$, then $G=(K_{3,3}\oplus(L_1\oplus K_{3,3})_{e_1})_{e_2}$, where $\{e_1,e_2\}$ is a matching of $L_1$.
Since each nice pair set $(A^*,B^*)$ of $K_{3,3}$ satisfies $|A^*|\leq 3$ and $|B^*|\leq 3$, the result follows from Lemma \ref{bi-2-cut}.
Assume that $|V(G)|\geq 14$ and the result holds for graphs with less vertices than $G$.
Let $G=(T\oplus(L_{n'}\oplus K_{3,3})_{e_4,e_3})_{e_1,e_2}$ (see Fig. \ref{best-sufficiency}), where $T\in \mathcal{T}$ and $e_i=u_iv_i$ for $1\leq i\leq 4$.
Set $U=V(L_{n'})\setminus\{u_i,v_i|i=2,4\}$.
By Lemma \ref{bi-2-cut}(i), $A'\cup B'$ is contained in $V(T)$, $V(K_{3,3})$ or $U$.
Observe that for any $a\in U\cap A$ (resp. $b\in U\cap B$), there is exactly one vertex $b\in U\cap B$ (resp. $a\in U\cap A$) of $G$ such that $(a,b)$ is a nice pair of $G$.
So $|A'|=|B'|=1$ if $A'\cup B'\subseteq U$.
If $A'\cup B'\subseteq V(T)$ (or $A'\cup B'\subseteq V(K_{3,3})$), then $(A',B')$ is also a nice pair set of $T$ (or $K_{3,3}$) by Lemma \ref{bi-2-cut}(ii).
If $T\neq K_{3,3}$, then $|A_T|\leq 3$ and $|B_T|\leq 3$ for each nice pair set $(A_T,B_T)$ of $T$ by induction.
So $|A'|\leq 3$ and $|B'|\leq 3$, and the sufficiency is proved.
\begin{figure}
\centering
\includegraphics{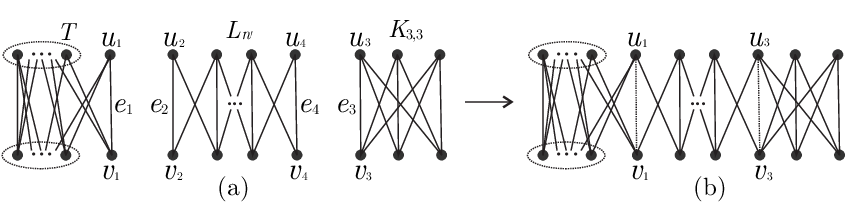}
\caption{\label{best-sufficiency} Illustration for the proof of sufficiency of Lemma \ref{best}.}
\end{figure}
\end{proof}

Theorem \ref{main-2} is obtained directly from Lemmas \ref{bi-3-edge}, \ref{2-cut-min}(ii) and \ref{best}. So we prove Corollary \ref{cor}.

\noindent{\textbf{Proof of Corollary \ref{cor}.}
If $G$ is 3-connected, then $G$ has at least 9 nice pairs of vertices and equality holds if and only if $G=K_{3,3}$ by Lemma \ref{bi-3-edge}.
If $G$ is not 3-connected, by Lemma \ref{2-cut-min}(i), take a 2-cut $\partial{(X)}=\{uv,wz\}$ of $G$ such that $u,w\in X$, $G[X]+uw\in \mathcal{\widehat{H}}$ and $G[\overline{X}\cup\{u,w\}]+uw\in \mathcal{H}^\diamond$.
Assume that $G[\overline{X}\cup \{u,w\}]+uw=(L_n\oplus H)_{e,f}$ for some $n\geq 1$, where $H\in \mathcal{H}$ and $e$ is a 22-edge of $L_n$.
Then $G[X]+uw$ and $H$ both have at least 9 nice pairs of vertices by Theorem \ref{main-2}.
Using Lemma \ref{bi-2-cut}(ii), $G$ has at least 18 nice pairs of vertices, and so the corollary holds.
\hfill$\square$

\section{\normalsize Conclusion and further problems}
In this paper, we characterized nice vertices in cubic graphs in terms of barriers (see Theorem \ref{criteria}), which was defined as trimatched vertices  in \cite{KSS}.
By using some results of matching covered graphs, we mainly obtained two lower bounds on the number of nice vertices in non-bipartite cubic graphs and determined all the corresponding extremal graphs. We also defined  nice pairs of vertices in cubic bipartite graphs and showed that a cubic bipartite graph is a brace if and only if each pair of vertices in different colors is nice (see Proposition \ref{brace-nice}). We also showed that every cubic bipartite graph has at least 9 nice pairs of vertices.

However, there are still some problems that may be further considered. Of course there is a polynomial algorithm to determine whether a vertex $u$ in a cubic graph $G$ is nice: First delete vertex $u$ together with three neighbors from $G$, then check whether the remained graph has a perfect matching or not by using known matching algorithm. Then $u$ is nice or not according to our answer ``yes" or ``no". From this we can determine all  nice vertices in cubic graphs.
\vskip 0.2cm
\noindent\textbf{Problem 1.} Is there a faster algorithm to determine all nice vertices in cubic graphs ?

\vskip 0.2cm
Lemma \ref{bicritical} states that each vertex in a cubic bicritical graph is nice.
But there  exist some non-bicritical cubic graphs in which all vertices are nice.
Let $G$ be a cubic graph obtained from a cubic brace $H(A,B)$ by splicing a cubic brick at each vertex $b\in B$. For example, see Fig. \ref{splicing}(c) for $H(A,B)=K_{3,3}$ and the cubic brick $K_4$.
Then  $G$ is a non-bicritical graph since the deletion  of any two vertices in $A$ results in a graph with no perfect matching. But each vertex of $G$ in $A$ is nice by Lemma \ref{p-3-edge-nice} and the other vertices of $G$ are also nice by Lemma \ref{G/X-nice}. So we propose the following problem.

\vskip 0.2cm
\noindent\textbf{Problem 2.} Characterize all cubic graphs whose all vertices are nice.

\vskip 0.2cm
Finally, we can also generalize the concept of nice vertices in cubic graphs to general graphs as follows.
A vertex $u$ of a graph $G$ is nice if $u$ and three of its neighbors induce a nice subgraph of $G$.
Then we can consider some similar problems  in regular graphs.
\vskip 0.2cm

\noindent\textbf{Problem 3.} How many nice vertices do $k$-regular graphs of even order have for $k\geq4$ ?

\newpage
\begin{center}
Appendix
\end{center}

Lemma 3.2 of our previous version (see [{\em Discrete Math}. 348 (2025) 114553]) states  that each vertex of a minimum nontrivial barrier of a 3-connected non-bipartite cubic graph is nice. However this statement is wrong. We give a counterexample: Let $G$ be a 3-connected non-bipartite cubic graph as shown in Fig. \ref{counterexample}(a). Then each vertex of a minimum non-trivial barrier $S$ of $G$ is not nice.
Main reason is that we (wrongly) claimed that $\overline{k}$ is nice in $G_1$ in the proof of Lemma 3.2, which is due to the incorrect statement that $K-v_1$ is contained in a component of $G'-S'$ in the third line of the proof of Claim 1.
In this version, Lemma \ref{p-3-edge-nice} can be corrected as ``Let $G$ be a 3-connected non-bicritical non-bipartite cubic graph. Then $G$ has a minimal nontrivial barrier $S$ so that each vertex of $S$ is nice.'', and Lemmas \ref{nice-number} and \ref{6-nice} remain correct and their proofs are slightly revised. Such revision of Lemma 3.2 does not affect the correctness of the other results.
\begin{figure}[ht]
\centering
\includegraphics{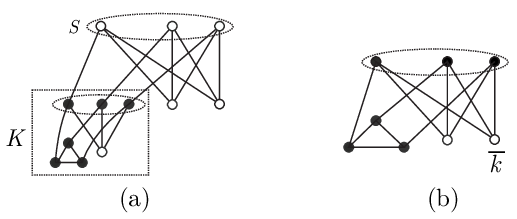}
 \caption{\label{counterexample} (a)\ A counterexample $G$ of Lemma 3.2, (b)\ $G_1$, where $\overline{k}$ is not nice in $G_1$.}
\end{figure}
\end{document}